\documentclass{amsart}
 
\pagestyle{plain}

\usepackage[all]{xy}
\usepackage{graphicx}
\usepackage{amsmath,amsxtra,amssymb,latexsym, amscd,amsthm, enumerate}

\usepackage{tikz}
\usepackage{ytableau}

\definecolor{red}{RGB}{255, 0, 0}
\definecolor{orange}{RGB}{255, 165, 0}
\definecolor{yellow}{RGB}{255, 255, 0}
\definecolor{chartreuse}{RGB}{127, 255, 0}
\definecolor{green}{RGB}{0, 255, 0}
\definecolor{springgreen}{RGB}{0, 255, 127}
\definecolor{cyan}{RGB}{0, 255, 255}
\definecolor{azure}{RGB}{0, 127, 255}
\definecolor{blue}{RGB}{0, 0, 255}
\definecolor{violet}{RGB}{127, 0, 255}

 \usepackage{indentfirst}
\usepackage[mathscr]{eucal}
  \usepackage[pagebackref=true]{hyperref}


\newtheorem{thm}{Theorem}[section]

\newtheorem{lem}[thm]{Lemma}

\theoremstyle{definition}

\newtheorem{defn}[thm]{Definition}
\newtheorem{exam}[thm]{Example}

\newtheorem{rem}[thm]{Remark}

\numberwithin{equation}{section}

\DeclareMathOperator{\NN}{\mathbb {N}}
\DeclareMathOperator{\ZZ}{\mathbb {Z}}

\DeclareMathOperator{\height}{height}

\DeclareMathOperator{\depth}{depth}

\def\a {\mathbf a}
\def\b {\mathbf b}

\def\m {\mathfrak m}

\def\k {\mathrm{k}}

\def\ww {\mathbf{w}}


\usepackage{color}
\newcommand{\red}{\textcolor[rgb]{1.00,0.00,0.00}}
\begin{document}

\title{Unmixed and sequentially Cohen-Macaulay skew tableau ideals}

\author{Do Trong Hoang}
\address{Faculty of  Mathematics and Informatics, Hanoi University of Science and Technology, 1 Dai Co Viet, Hai Ba Trung, Hanoi, Vietnam.}

\email{hoang.dotrong@hust.edu.vn}

\author{Thanh Vu}
\address{Institute of Mathematics, VAST, 18 Hoang Quoc Viet, Hanoi, Vietnam}
\email{vuqthanh@gmail.com}

\subjclass[2020]{05E40, 13H10, 13F55}
\keywords{Cohen-Macaulay; Buchsbaum; unmixed; sequentially Cohen-Macaulay; skew Young diagram; skew tableau ideal}

\date{}

\dedicatory{Dedicated to Professor Uwe Nagel on the occasion of his 60th birthday}
\commby{}

\begin{abstract}
   We associate a {\it skew tableau ideal} to each filling of a skew Ferrers diagram with positive integers. We classify all unmixed and sequentially Cohen-Macaulay skew tableau ideals. Consequently, we classify all Cohen-Macaulay, Buchsbaum, and generalized Cohen-Macaulay skew tableau ideals.
\end{abstract}

\maketitle
\section{Introduction} Motivated by the work of Corso and Nagel \cite{CN}, in \cite{HV}, we studied ideals associated with arbitrary fillings of positive integers in a Young diagram, known as tableau ideals. In particular, we proved that the tableau ideal $I(T)$ is Cohen-Macaulay if and only if $T$ has the shape $(n,n-1,\ldots,1)$ for some $n$ and the filling of $T$ is weakly increasing along both rows and columns of $T$. Skew Young tableaux, a natural generalization of Young tableaux, play a fundamental role in representation theory and symmetric functions \cite{Ful}. It is natural to extend our studies to skew tableau ideals. In this study, we classify both unmixed and sequentially Cohen-Macaulay skew tableau ideals. Our findings indicate that a skew shape is unmixed if and only if its corner blocks are square. Additionally, we demonstrate that a Ferrers diagram is sequentially Cohen-Macaulay if and only if its rectangular corner blocks have $\min(\text{width, height}) = 1$. Moreover, a sequentially Cohen-Macaulay skew shape can be formed by incrementally adding such rectangular blocks to sequentially Cohen-Macaulay diagrams. See Section \ref{sec_scm_skew_Ferrers} for a more precise statement. Sequentially Cohen-Macaulay ideals possess many desirable properties similar to those of Cohen-Macaulay ideals. In particular, their depth and regularity are much better understood. Let us now introduce these concepts in greater detail.

A Young diagram, or Ferrers diagram, is a finite collection of boxes, or cells, arranged in left-justified rows, with the row lengths in non-increasing order. The numbers of boxes in each row give a partition $\lambda$ of the total number of boxes in the diagram. A skew shape is a pair of partitions $(\lambda, \mu)$ such that the Young diagram of $\lambda$ contains the Young diagram of $\mu$. The skew diagram of a skew shape $\lambda/\mu$ is the set of boxes that belong to the diagram of $\lambda$ but not to that of $\mu$. For each filling $Y$ of the skew diagram $\lambda/\mu$ whose the values at the box in the $i$th row and $j$th column is $w(i,j)$, we associated with a skew tableau ideal $I(Y) \subseteq S = \k[x_1,\ldots,x_n,y_1,\ldots,y_m]$, where $n$ is the length of $\lambda$ and $m = \lambda_1$ as follows
$$I(Y) = ( (x_iy_j)^{w(i,j)}\mid 1\le i\le n, \mu_i+1\le j \le \lambda_i).$$	
The radical of $I(Y)$, denoted by $I_{\lambda/\mu}$, is called the skew Ferrers ideal. Skew Ferrers ideals also arise as the initial ideals of the closed binomial edge ideals and have been studied in \cite{dAH, H, P}. 

Letting $\mu = 0$ be the partition of $0$, we obtain the tableau ideals corresponding to the shape $\lambda$. To study the sequentially Cohen-Macaulay property of skew tableau ideals, we first need to classify all skew shapes $\lambda/\mu$ such that $I_{\lambda/\mu}$ is sequentially Cohen-Macaulay. We now recall the definition of sequentially Cohen-Macaulay modules over $S$ introduced by Stanley \cite{S}.
\begin{defn} Let $M$ be a graded module over $S$. We say that $M$ is
	sequentially Cohen-Macaulay if there exists a filtration
	$$0 = M_0\subset M_1 \subset \cdots \subset M_r = M$$
	of $M$ by graded $S$-modules such that $\dim (M_i/M_{i-1}) <\dim (M_{i+1}/M_i)$
	for all $i$, where $\dim$ denotes Krull dimension, and $M_i/M_{i-1}$ is Cohen-Macaulay for all $i$. An ideal $J$ is said to be sequentially Cohen-Macaulay if $S/J$ is a sequentially Cohen-Macaulay $S$-module. A skew shape $\lambda/\mu$ is called sequentially Cohen-Macaulay if the corresponding skew Ferrers ideal $I_{\lambda/\mu}$ is.
\end{defn}

We first classify all sequentially Cohen-Macaulay Ferrers ideals. By convention, for a partition $\lambda = (\lambda_1, \ldots, \lambda_n)$ we set $\lambda_j = 0$ for all $j > n$.

\begin{defn} We say that a partition $\lambda$ is saturated if either $\lambda$ is strictly decreasing or $\{\lambda_i,\lambda_i-1, \ldots, 1\} \subseteq \{\lambda_1, \ldots, \lambda_n\}$ where $i$ is the smallest index such that $\lambda_i = \lambda_{i+1}$.
\end{defn}

\begin{thm}\label{thm_scm_Ferrers} Let $\lambda = (\lambda_1,\ldots, \lambda_n)$ be a partition. Then $I_\lambda$ is sequentially Cohen-Macaulay if and only if $\lambda$ is saturated.
\end{thm}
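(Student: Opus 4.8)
The plan is to reformulate everything in terms of the Stanley--Reisner complex $\Delta_\lambda$ of $I_\lambda$, which is the independence complex of the \emph{Ferrers graph} $G_\lambda$ on $\{x_1,\dots,x_n\}\cup\{y_1,\dots,y_m\}$ having $x_iy_j$ as an edge exactly when $(i,j)$ is a box of $\lambda$, and to treat both implications combinatorially. The facts I will set up first are: (1) the facets of $\Delta_\lambda$ are $F_0=\{y_1,\dots,y_m\}$ together with, for each distinct part value $k$ of $\lambda$, the facet $F_k=\{x_i:\lambda_i\le k\}\cup\{y_j:j>k\}$ (so $F_{\lambda_1}=\{x_1,\dots,x_n\}$), and $\dim F_k=\#\{i:\lambda_i\le k\}+(m-k)-1$; (2) deletions and links at vertices of $\Delta_\lambda$ are again cones over Ferrers complexes: $\operatorname{link}_{\Delta_\lambda}(y_1)$ is the simplex on $\{y_2,\dots,y_m\}$ and $\operatorname{del}_{\Delta_\lambda}(y_1)$ is a cone over $\Delta_{\lambda'}$ where $\lambda'$ deletes the first column, with similar descriptions for $x_1$, for the $x_i$ of minimal length, and for top columns $y_{\lambda_1},y_{\lambda_2},\dots$; (3) sequential Cohen--Macaulayness is inherited by links and is unaffected by coning with fresh variables; (4) by Duval's theorem $S/I_\Delta$ is SCM iff every pure skeleton $\Delta^{[r]}$ is Cohen--Macaulay.

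For "$\lambda$ saturated $\Rightarrow$ $I_\lambda$ SCM" I will show $\Delta_\lambda$ is vertex decomposable (hence shellable, hence SCM) by induction on $|\lambda|$. If $\lambda$ is strictly decreasing, shed $x_1$: since $N(x_1)=\{y_1,\dots,y_m\}$, $\operatorname{link}(x_1)$ is a simplex, $\operatorname{del}(x_1)$ is a cone over $\Delta_{(\lambda_2,\dots,\lambda_n)}$ (again strictly decreasing, so vertex decomposable by induction), and the shedding condition holds because the one facet $\{x_2,\dots,x_n\}$ of $\operatorname{link}(x_1)$ omits the isolated vertex $y_m$ of $\operatorname{del}(x_1)$. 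If $\lambda$ has a repeated part, saturation forces $1$ to be a part; shed $y_1$: now $N(y_1)=\{x_1,\dots,x_n\}$, so $\operatorname{link}(y_1)$ is a simplex, $\operatorname{del}(y_1)$ is a cone over $\Delta_{\lambda^{(1)}}$ with $\lambda^{(1)}$ (delete first column) still saturated, and the shedding condition holds because any facet of $\operatorname{link}(y_1)$ omits the isolated vertex $x_i$ coming from a length-one row. The base case is $\lambda=\varnothing$, where $\Delta_\lambda$ is a simplex.

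For "$\lambda$ not saturated $\Rightarrow$ $I_\lambda$ not SCM" I argue by reducing through links to an explicit short list of shapes. First, iterating links at the top columns $y_{\lambda_1},y_{\lambda_2},\dots$ strips the strictly decreasing head and yields $\lambda=(a^q,c_1,\dots,c_\ell)$ with $q\ge2$, $a\ge2$, still not saturated (some value of $\{1,\dots,a-1\}$ is not among the parts). Then a case analysis on the bottom of the diagram: if $1$ is a part, $\operatorname{link}$ at the length-one rows gives a cone over $\Delta_{\lambda\ominus1}$ (subtract $1$ from each part, drop zeros), still not saturated and with fewer rows, so one recurses; if $1$ is not a part but the smallest part $c_\ell$ has multiplicity $\ge2$ one peels off the bottom rows to reach $\Delta_{(c_\ell^{\,m_s})}$ with $c_\ell,m_s\ge2$, manifestly not SCM; if moreover the parts do not form a full interval $[c_\ell,a]$ one links at the length-$c_\ell$ rows to get $\Delta_{\lambda\ominus c_\ell}$, still not saturated with fewer rows; and if the parts form a full interval but an intermediate value is repeated, linking at a block of top columns reduces to a strictly smaller full-interval shape. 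The only shapes surviving every reduction are $\lambda=(a^q,a-1,a-2,\dots,c)$ with $q\ge2$ and $2\le c\le a$ (including $(a^q)$ for $c=a$). For these, observation (1) shows every "mixed" facet $F_k$ has size $a-c+1$, strictly less than both $|F_0|=a$ and $|F_{\lambda_1}|=q+a-c$; hence for $r=\min(a-1,\,q+a-c-1)\ge1$ the pure skeleton $\Delta_\lambda^{[r]}$ is the disjoint union of the $r$-skeleta of the simplices on $\{y_1,\dots,y_a\}$ and on $\{x_1,\dots,x_n\}$, so it is disconnected and not Cohen--Macaulay, and by (4) $S/I_\lambda$ is not SCM.

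The delicate part is the reverse direction: one must verify that the four reduction moves genuinely cover every non-saturated shape, that each move strictly decreases a well-chosen parameter (number of rows, or lands directly on a $\Delta_{(e^f)}$ with $e,f\ge2$), and in particular that the "missing value" and the top multiplicity are correctly tracked through the operation $\lambda\mapsto\lambda\ominus c$; this bookkeeping, together with the facet computation for the minimal shapes, is where the real work lies. By contrast the forward direction is routine once one notices that $x_1$ (in the strictly decreasing case) and $y_1$ (when there is a repeat, using saturation to guarantee a length-one row) are shedding vertices.
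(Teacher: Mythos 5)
Your proposal is correct, and it splits naturally into a half that matches the paper and a half that is genuinely different. For sufficiency you do essentially what the paper does: prove vertex decomposability by induction, shedding $x_1$ when $\lambda_1>\lambda_2$; the only deviation is that in the repeated-top case the paper passes to the conjugate partition (Lemma \ref{lem_conjugate}) and reapplies the first case, while you shed $y_1$ directly, using the length-one row forced by saturation to verify the shedding condition and the fact that $\lambda$ with its first column deleted is again saturated --- an equivalent amount of work. For necessity your route is different. The paper (Lemma \ref{lem_nec_Ferrers}) takes a single colon $I_\lambda : x_{j+1}y_{p+1}$ to exhibit a complete bipartite graph $K_{a,b}$ with $a,b\ge 2$ as an associated radical, then combines Lemma \ref{lem_scm_radical}, Theorem \ref{thm_vertex_decomposable} and Lemma \ref{lem_non_vertex_decomp_complete_bipartite}. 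You instead iterate vertex links (strip the strictly decreasing head, subtract a column block via a length-$c_\ell$ row, cut at a repeated column) down to the terminal family $(a^q,a-1,\dots,c)$ with $q\ge2$, $c\ge2$, and kill those by hand with Duval's pure-skeleton criterion: every mixed facet has size $a-c+1$, strictly below $|F_0|=a$ and $|F_{\lambda_1}|=q+a-c$, so an intermediate pure skeleton is a disconnected complex of positive dimension. This costs the bookkeeping you flag (completeness of the case analysis, preservation of non-saturation under $\lambda\mapsto\lambda\ominus c$, the invariant that the top part stays repeated), and I checked that it does go through. What it buys is robustness: for partitions such as $(3,3,2)$ or $(4,4,3,1)$ the configuration $\lambda_i=\cdots=\lambda_j\ge\lambda_{j+1}+2$ invoked in Lemma \ref{lem_nec_Ferrers} does not exist, and in fact every proper link of the independence complex of $G_{(3,3,2)}$ is sequentially Cohen-Macaulay, so no single associated radical of $I_{(3,3,2)}$ exhibits a complete bipartite obstruction; your terminal-family computation is precisely what disposes of these shapes, and it also removes the reliance on Van Tuyl's theorem in the necessity direction, needing only Duval's criterion together with the standard facts that links and cone points preserve sequential Cohen-Macaulayness.
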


We then show that the sequentially Cohen-Macaulay property of skew Ferrers ideals can be determined by the sequentially Cohen-Macaulay property of its properly sub skew Ferrers ideals recursively. We denote by $G_{\lambda/\mu}$ the skew Ferrers graph associated with the skew Ferrers ideal $I_{\lambda/\mu}$.

\begin{thm}\label{thm_scm_skew_Ferrers} Let $\lambda/\mu$ be a skew shape. Let $\lambda', \mu'$ be the conjugate partitions of $\lambda,\mu$ respectively. Then $G_{\lambda/\mu}$ is sequentially Cohen-Macaulay if and only if one of the following conditions holds.
\begin{enumerate}
    \item $\lambda_1 > \lambda_2$. Then $G_{\lambda/\mu} \backslash x_1$ and $G_{\lambda/\mu} \backslash N[x_1]$ are sequentially Cohen-Macaulay.
    \item $\mu_1+1 = \lambda_1$. Then $G_{\lambda/\mu} \backslash y_{m}$ and $G_{\lambda/\mu} \backslash N[y_m]$ are sequentially Cohen-Macaulay.
    \item $\lambda_1' > \lambda_2'$. Then $G_{\lambda/\mu} \backslash y_1$ and $G_{\lambda/\mu} \backslash N[y_1]$ are sequentially Cohen-Macaulay.
    \item $\mu_1' + 1  = \lambda_1'$. Then $G_{\lambda/\mu} \backslash x_n$ and $G_{\lambda/\mu} \backslash N[x_n]$ are sequentially Cohen-Macaulay.
\end{enumerate}
where $G \backslash V$ denotes the induced subgraph of $G$ on $V(G) \backslash V$ and $N[v]$ denotes the closed neighborhood of $v$ in $G$.
\end{thm}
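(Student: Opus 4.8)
The plan is to induct on the number of boxes of $\lambda/\mu$: reduce two of the four cases to the other two by a transposition symmetry, handle the ``boundary'' cases via a shedding‑vertex argument, and treat separately the configuration in which none of the four shape hypotheses applies.

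\emph{Step 1: conjugation.} The transpose of the skew diagram is the diagram of $\lambda'/\mu'$, and it yields a graph isomorphism $G_{\lambda/\mu}\cong G_{\lambda'/\mu'}$ swapping $x_i$ with $y_i$; in particular $x_1\leftrightarrow y_1$ and $x_n\leftrightarrow y_m$, and condition (1) for $\lambda'/\mu'$ becomes condition (3) for $\lambda/\mu$, while (2) for $\lambda'/\mu'$ becomes (4) for $\lambda/\mu$. Hence it suffices to prove: (I) if $\lambda_1>\lambda_2$ then $G_{\lambda/\mu}$ is sequentially Cohen--Macaulay iff $G_{\lambda/\mu}\setminus x_1$ and $G_{\lambda/\mu}\setminus N[x_1]$ are; (II) the analogue for $\mu_1+1=\lambda_1$ with $y_m$; and (III) if none of the four shape hypotheses holds then $G_{\lambda/\mu}$ is not sequentially Cohen--Macaulay.

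\emph{Step 2: the shedding‑vertex cases (I) and (II).} Suppose $\lambda_1>\lambda_2$. Then the box $(1,\lambda_1)$ sits in no other row, so $y_m=y_{\lambda_1}$ is adjacent only to $x_1$; thus $y_m$ is isolated in $G_{\lambda/\mu}\setminus x_1$ and lies in every maximal independent set of that graph. Since $y_m\in N(x_1)$, no facet of $\operatorname{lk}_\Delta(x_1)$ is a facet of $\operatorname{del}_\Delta(x_1)$, where $\Delta:=\operatorname{Ind}(G_{\lambda/\mu})$; that is, $x_1$ is a shedding vertex of $\Delta$. Invoking the standard fact that, for a shedding vertex $v$ of $\Delta$, $\Delta$ is sequentially Cohen--Macaulay iff $\operatorname{del}_\Delta(v)$ and $\operatorname{lk}_\Delta(v)$ both are, and using $\operatorname{del}_\Delta(x_1)=\operatorname{Ind}(G_{\lambda/\mu}\setminus x_1)$, $\operatorname{lk}_\Delta(x_1)=\operatorname{Ind}(G_{\lambda/\mu}\setminus N[x_1])$, gives (I). Case (II) is symmetric: if $\mu_1+1=\lambda_1$ then the first row of the skew diagram is the single box $(1,\lambda_1)$, so $x_1$ is a pendant at $y_m$ and $y_m$ is a shedding vertex. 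If one prefers to avoid citing the shedding lemma, it can be re‑derived from $0\to S/(I_{\lambda/\mu}:x_v)\xrightarrow{x_v}S/I_{\lambda/\mu}\to S/(I_{\lambda/\mu}+(x_v))\to 0$: identify the outer terms with $\operatorname{Ind}(G_{\lambda/\mu}\setminus v)$ and (after adjoining $x_v$) $\operatorname{Ind}(G_{\lambda/\mu}\setminus N[v])$ over suitable polynomial rings, and use the shedding hypothesis to match dimensions so the dimension filtration of $S/I_{\lambda/\mu}$ splices out of those of the two ends.

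\emph{Step 3: no shape hypothesis (III) --- the main obstacle.} Here $\lambda_1=\lambda_2$, $\lambda_1'=\lambda_2'$, $\mu_1+1<\lambda_1$ and $\mu_1'+1<\lambda_1'$, so $\lambda/\mu$ fills the upper--right and lower--left corners of $\lambda$ thickly: the $2\times2$ blocks on rows $\{1,2\}$, columns $\{\lambda_1-1,\lambda_1\}$ and on rows $\{n-1,n\}$, columns $\{1,2\}$ lie entirely in $\lambda/\mu$, and $\{x_1,\dots,x_n\}$, $\{y_1,\dots,y_m\}$ are two disjoint facets of $\Delta$. I would show $\Delta$ is not sequentially Cohen--Macaulay by producing, for a suitable $i$, a face $\sigma$ of the pure $i$‑skeleton $\Delta^{[i]}$ (generated by the $i$‑dimensional faces of $\Delta$) whose link $\operatorname{lk}_{\Delta^{[i]}}(\sigma)$ has nonvanishing reduced homology below its dimension --- typically it is already disconnected --- so that $\Delta^{[i]}$ fails Reisner's criterion and $\Delta$ fails the Duval characterization of sequential Cohen--Macaulayness. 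The choice of $\sigma$ and $i$ is dictated by the forced corner blocks, in the spirit that the disjoint facets $\{x_\bullet\}$, $\{y_\bullet\}$ cannot be ``bridged'' at level $\min(n,m)-1$ unless the skew diagram has a suitable notch, which the hypotheses preclude. Making this precise --- equivalently, identifying exactly which rectangular corner blocks of $\lambda/\mu$ may be peeled off, and reducing the base of the induction to the pure Ferrers case of Theorem~\ref{thm_scm_Ferrers} --- is the technical heart of the proof. Finally, termination: each graph in (1)--(4) is again a skew Ferrers graph up to isolated vertices (which change neither sequential Cohen--Macaulayness nor the shape) and has strictly fewer boxes, so the induction is well founded; in the base cases ($\lambda/\mu$ empty, a single row, or a single column) $G_{\lambda/\mu}$ is edgeless or a star and is sequentially Cohen--Macaulay, while (1) or (3) holds trivially.
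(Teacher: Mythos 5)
Your Steps 1 and 2 are sound and essentially match the paper: conjugation reduces (3),(4) to (1),(2), and when $\lambda_1>\lambda_2$ (resp.\ $\mu_1+1=\lambda_1$) the vertex $y_m$ is a leaf attached to $x_1$ (resp.\ $x_1$ is a leaf attached to $y_m$), which makes $x_1$ (resp.\ $y_m$) a shedding vertex and lets you pass both the deletion and the link through links of the sequentially Cohen--Macaulay complex. (The paper phrases the necessity of the two subgraph conditions via the colon ideals $I_{\lambda/\mu}:y_{\lambda_1}$ and $I_{\lambda/\mu}:x_1$ and the fact that associated radicals of sequentially Cohen--Macaulay monomial ideals are sequentially Cohen--Macaulay, which is the same leaf observation in algebraic clothing.)

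The genuine gap is your Step 3, and you say so yourself: the claim that if $\lambda_1=\lambda_2$, $\mu_1+1<\lambda_1$, $\lambda_1'=\lambda_2'$ and $\mu_1'+1<\lambda_1'$ then $G_{\lambda/\mu}$ is not sequentially Cohen--Macaulay is exactly the content of the paper's Lemma~\ref{lem_shape_condition}, and without it the ``only if'' direction of the theorem is unproved. Your proposed route through Duval's skeleton criterion and Reisner's condition would require exhibiting, for a general skew shape in this configuration, a face of some pure skeleton with bad homology; you never identify the face, the skeleton level, or why the two forced $2\times 2$ corner blocks produce the homology, and this is precisely where the difficulty lives. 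The paper avoids homology entirely: by Van Tuyl's theorem a bipartite edge ideal is sequentially Cohen--Macaulay iff the graph is vertex decomposable, so one may argue combinatorially. An induction on $m+n$ (using that $G\backslash y_j$ is again a skew Ferrers graph) forces any shredding vertex to be $y_1$, $y_2$, $y_{m-1}$, $y_m$ or an $x$-analogue; then $T=\{y_2,\dots,y_m\}$ (resp.\ $T=\{y_1,y_3,\dots,y_m\}$) is a maximal independent set of $G\backslash N[y_1]$ (resp.\ $G\backslash N[y_2]$) that remains maximal in $G\backslash y_1$ (resp.\ $G\backslash y_2$), contradicting the shredding condition. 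The base case $\mu=0$ is Theorem~\ref{thm_scm_Ferrers}, whose obstruction is ultimately the non--vertex-decomposability of $K_{a,b}$ with $a,b\ge 2$. You would need to supply an argument of comparable strength (and explicitly invoke the Ferrers classification as the base of your induction) before your proof is complete.
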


Since a sequentially Cohen-Macaulay ideal is Cohen-Macaulay if and only if it is unmixed, we now classify unmixed skew Ferrers ideals. We introduce the following notation that describes the unmixed Ferrers diagrams.

\begin{defn}  Let $\lambda = (\lambda_1,\ldots,\lambda_n)$ be a partition and $\lambda'$ be the conjugate partition of $\lambda$. The rectangular regions of the Ferrers diagram corresponding to $\lambda$, cut out by the horizontal lines at $x = \lambda_i'$ and vertical lines at $y = \lambda_j$ are called blocks of $\lambda$. The corner blocks of $\lambda$ are those that do not have adjacent blocks that lie to the right or below them. The extremal blocks of $\lambda$ are the top-right and bottom-left corner blocks. The size of a block is the minimum value of its depth and width.
\end{defn}

Using this terminology, the result of Corso and Nagel \cite[Corollary 2.7]{CN} can be rephrased as: $I_\lambda$ is unmixed if and only if all corner blocks of $\lambda$ are square. In the following, a skew shape $\lambda/\mu$ is called unmixed, Cohen-Macaulay, generalized Cohen-Macaulay, Buchsbaum if the corresponding skew Ferrers ideal $I_{\lambda/\mu}$ is.

\begin{defn} The diagram of an unmixed partition $\lambda$ is called an upper triangular prime unmixed shape. Its reflection along the main diagonal is called a lower triangular prime unmixed shape. A prime unmixed shape is either an upper triangular or a lower triangular prime unmixed shape.

Two prime unmixed shapes sharing an extremal block are said to be glued together along this extremal block. 
\end{defn}

\begin{thm}\label{thm_unmixed_skew_Ferrers} Let $\lambda/\mu$ be a skew shape. Assume that $G_{\lambda/\mu}$ is connected. Then $I_{\lambda/\mu}$ is unmixed if and only if there exists a sequence of prime unmixed shapes $B_1,\ldots, B_t$ such that $\lambda/\mu = B_1 \cup \cdots \cup B_t$, $B_i, B_{i+1}$ are of alternating type (upper/lower triangular) and $B_i \cap B_{i+1}$ is the extremal block of $B_i$ and $B_{i+1}$ for all $i = 1, \ldots, t-1$.
\end{thm}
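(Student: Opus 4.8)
The plan is to work throughout with the bipartite skew Ferrers graph $G=G_{\lambda/\mu}$. Since $I_{\lambda/\mu}$ is an edge ideal, it is unmixed if and only if $G$ is \emph{well-covered}, i.e.\ all maximal independent sets of $G$ — equivalently, all minimal vertex covers — have the same cardinality. Deleting the empty rows and columns of $\lambda/\mu$ only removes free polynomial variables and does not affect unmixedness, so we may assume $G$ has no isolated vertex; by hypothesis $G$ is connected. The key technical device will be a \emph{clean amalgamation} lemma: call $\lambda/\mu=(\nu/\rho)\cup B$ a clean amalgamation along $E:=(\nu/\rho)\cap B$ if $B$ is a prime unmixed shape, $E$ is an extremal block of $B$ (hence, $B$ being unmixed, a \emph{square} block, so $G_E\cong K_{k,k}$ for some $k$), and no cell outside $E$ lies in a row or a column that meets both $\nu/\rho$ and $B$. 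In that situation the two sides decouple: the maximal independent sets of $G$ are exactly the pairs consisting of a maximal independent set of $G_{\nu/\rho}$ and one of $G_B$ that agree on the $2k$ vertices of $G_E$, and because $E$ is balanced their sizes combine in a controlled way. The content of the theorem is that the hypotheses on the $B_i$ (prime unmixed, alternating type, meeting in extremal blocks), together with the requirement that $B_1\cup\cdots\cup B_t$ be an honest skew shape (both row lengths and column heights nonincreasing), force every gluing to be clean.

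\textbf{Sufficiency.} Induct on $t$. For $t=1$, $\lambda/\mu$ is, up to transposition (which merely interchanges the $x$- and $y$-variables and so preserves unmixedness), the diagram of an unmixed partition, and $I_{\lambda/\mu}$ is unmixed by \cite[Corollary 2.7]{CN}. For $t>1$ put $\nu/\rho=B_1\cup\cdots\cup B_{t-1}$; this is a skew shape with connected graph, unmixed by the induction hypothesis, and $\lambda/\mu$ is the clean amalgamation of $\nu/\rho$ and $B_t$ along $E=B_{t-1}\cap B_t$. By the decoupling and the balancedness of $E$, the maximal independent sets of $G$ all have the same size, so $G$ is well-covered. (Equivalently, one may build a perfect matching of $G$ restricting to perfect matchings of $G_{\nu/\rho}$ and $G_{B_t}$ that agree on $E$, and verify at each matching edge the complete-bipartite neighbourhood condition characterising well-covered connected bipartite graphs.)

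\textbf{Necessity.} Assume $G$ is well-covered and induct on the number of cells of $\lambda/\mu$. Let $B$ be the top–right block of $\lambda/\mu$, the maximal grid-aligned rectangle containing the cell $(1,\lambda_1)$; it is always a corner block. \emph{Step 1: $B$ is square.} If not — say $B$ is strictly wider than it is tall — one exhibits two maximal independent sets of $G$ of different sizes by a modification supported near the top–right corner, just as in the proof of \cite[Corollary 2.7]{CN}; the only new point is to check that the inner boundary coming from $\mu$ cannot abut $B$ so as to rescue well-coveredness, which a short case analysis rules out. \emph{Step 2.} Let $B_1$ be the largest prime unmixed shape contained in $\lambda/\mu$ that has $B$ as one of its extremal blocks (it is upper- or lower-triangular according to whether the corner of $\lambda/\mu$ bulges rightward or downward); since every corner block of $B_1$ is a corner block of $\lambda/\mu$, hence square, $B_1$ is indeed prime unmixed. \emph{Step 3.} Let $E'$ be the other extremal block of $B_1$ and let $\lambda''/\mu''$ be obtained from $\lambda/\mu$ by deleting the cells of $B_1$ outside $E'$; then $\lambda/\mu$ is the clean amalgamation of $B_1$ and $\lambda''/\mu''$ along $E'$, and running the decoupling in reverse shows that $G_{\lambda''/\mu''}$ is well-covered. \emph{Step 4.} Apply the induction hypothesis to the connected components of $\lambda''/\mu''$; the component meeting $E'$ must start with a piece of the type opposite to that of $B_1$, since $B_1$ already occupies every cell to the north-east of $E'$. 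Prepending $B_1$ yields the required chain.

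\textbf{Where the difficulty lies.} The real work is the clean amalgamation lemma, in both its forward and reverse uses: isolating the precise geometric hypotheses under which attaching a prime piece along a square extremal block creates no new adjacencies outside that block, and tracking how the independent-set (equivalently, vertex-cover) counts split across the block. The feature absent from the Ferrers case of \cite{CN} is the interference of the inner boundary $\mu$: one must show that a connected skew shape all of whose corner blocks are square but which nonetheless fails the gluing description already contains two maximal independent sets of different sizes — so that ``all corner blocks square'' is genuinely insufficient in the skew case, and the peeling recursion terminates exactly in prime unmixed pieces.
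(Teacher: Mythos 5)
Your sufficiency argument is essentially the paper's, translated into graph language: the paper writes $I_{\lambda/\mu} = (I_{\lambda^1/\mu^1} + (x_1,\ldots,x_p)) \cap (I_{\lambda^2/\mu^2} + (y_{n-m+1},\ldots,y_n))$ across the shared square block and invokes induction on $t$; your decoupling of minimal vertex covers across $G_E \cong K_{k,k}$ is the same mechanism, and it works.

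The necessity direction, however, has a genuine gap, and it sits exactly where you yourself locate ``the real work.'' In Step~2 you take ``the largest prime unmixed shape $B_1$ contained in $\lambda/\mu$ having $B$ as an extremal block'' and justify that it is prime unmixed because ``every corner block of $B_1$ is a corner block of $\lambda/\mu$.'' Neither claim is established: a maximal such sub-shape need not have its corner blocks among those of $\lambda/\mu$, and --- more importantly --- nothing in your argument forces the inner boundary $\mu$ to line up so that $B_1$ terminates in a \emph{full} extremal block of the next piece. That alignment is the actual content of the theorem beyond ``all corner blocks are square,'' and it is precisely what the paper proves in Cases 2 and 3 of its argument: writing the top strip as having $\mu_1 \ge n-m-p$, it rules out $\mu_1 > n-m-p$ by passing to the associated radical $I_{\lambda/\mu}:x_i$ (equivalently, deleting the columns $j > \mu_1$, cf.\ Remark on shape separation and the lemma that associated radicals of unmixed ideals are unmixed), which produces a shape with $n-m$ surviving rows but only $n-\mu_1 < n-m$ columns --- impossible for an unmixed bipartite edge ideal since both $(x_i)_i$ and $(y_j)_j$ are associated primes. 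Your proposal never performs this count; it is deferred to an unproved ``clean amalgamation lemma'' together with ``a short case analysis.'' Similarly, Step~3's assertion that ``running the decoupling in reverse'' shows $G_{\lambda''/\mu''}$ is well-covered is not free: the paper instead arranges for the remainder shape to arise as an associated radical (a colon by a variable), so unmixedness is inherited automatically, whereas your $\lambda''/\mu''$ (delete the cells of $B_1$ outside $E'$) is not visibly of that form. As written, the necessity half is a correct outline with the two load-bearing steps asserted rather than proved.
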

Consequently, we deduce a characterization of (generalized) Cohen-Macaulay and Buchsbaum skew Ferrers ideals. See Section \ref{sec_scm_skew_Ferrers} for more details.
\begin{exam} An unmixed skew shape is obtained by gluing a sequence of alternating type prime unmixed shapes. In the following figure, boxes with the same labels form a block. The first prime unmixed shape consists of the blocks $a$, $b$, $c$, $d$, $e$, $f$. The second prime unmixed shape consists of the blocks $f$, $g$, and $h$. The third prime unmixed shape consists of the blocks $h$, $i$, $j$. The first and second prime shapes are glued along their extremal block $f$. The second and third prime shapes are glued along their extremal block $h$.
 \begin{center}  
     \begin{tabular}{ccc}
     $\begin{ytableau}
       \none & \none & \none & \none  & \none & *(red)a \\
       \none & \none & \none & \none & *(yellow)c & *(orange)b \\
     \none    & *(blue)g & *(green)f & *(green)f & *(cyan)e & *(chartreuse)d \\
     \none    & *(blue)g & *(green)f & *(green)f & *(cyan)e & *(chartreuse)d \\
    \none    & *(violet)h   \\
    *(azure)j    & *(springgreen)i   
   \end{ytableau}$\\[15pt]
   \text{Unmixed skew diagram}
   \end{tabular}
   \end{center}
\end{exam}

We now come to skew tableau ideals. Skew tableau ideals are the edge ideals of the edge weighted graphs associated with the skew Ferrers graphs. For the unmixed property, we prove
\begin{thm}\label{thm_unmixed_skew_tableau} Let $Y$ be a filling of positive integers in a skew shape $\lambda/\mu$. Assume that $G_{\lambda/\mu}$ is connected. Then $I(Y)$ is unmixed if and only if $I_{\lambda/\mu}$ is unmixed and the following conditions hold:
\begin{enumerate}
    \item the filling of $Y$ is constant in each blocks of $\lambda/\mu$,
    \item the filling of $Y$ on upper triangular prime shapes of $\lambda/\mu$ is weakly increasing along both rows and columns,
    \item the filling of $Y$ on lower triangular prime shapes of $\lambda/\mu$ are weakly decreasing along both rows and columns.
\end{enumerate}
\end{thm}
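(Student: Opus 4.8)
The plan is to separate the radical from the embedded behaviour of $I(Y)$. Since $\sqrt{I(Y)}=I_{\lambda/\mu}$, the two ideals have the same minimal primes, so if $I(Y)$ is unmixed then $I_{\lambda/\mu}$ is unmixed; and once $I_{\lambda/\mu}$ is unmixed, it remains only to decide when $I(Y)$ has an embedded associated prime, since then $I(Y)$ is unmixed if and only if it has none. For this I would use the standard combinatorial description of associated primes of a monomial ideal: a monomial prime $P_V$ generated by a set $V$ of the variables lies in $\mathrm{Ass}(S/I(Y))$ if and only if $V$ is a vertex cover of $G_{\lambda/\mu}$ and the monomial ideal $I(Y)_V$, obtained by setting to $1$ every variable not in $V$ in the generators of $I(Y)$, has a socle monomial modulo itself; such $P_V$ is embedded precisely when $V$ is not a minimal vertex cover. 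Thus, assuming $I_{\lambda/\mu}$ unmixed, the theorem becomes: no non-minimal vertex cover of $G_{\lambda/\mu}$ carries a socle monomial if and only if (1)--(3) hold.

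Step 1: reducing to a single prime unmixed shape. By Theorem \ref{thm_unmixed_skew_Ferrers}, write $\lambda/\mu = B_1\cup\cdots\cup B_t$ as a chain of prime unmixed shapes of alternating type, glued along square extremal blocks $R_i = B_i\cap B_{i+1}$; since the conditions (1)--(3) are exactly the conjunction over the $B_i$ of the corresponding single-prime-shape conditions, it suffices to show that $I(Y)$ is unmixed if and only if each $I(Y|_{B_i})$ is. Here squareness of the $R_i$ is the key: the $s_i\times s_i$ complete bipartite graph on $V(R_i)$ separates $B_i$ from $B_{i+1}$ in $G_{\lambda/\mu}$, so every generator of $I(Y)_V$ lies in a single $\k[V\cap V(B_k)]$; writing $I(Y)_V = J_1+\cdots+J_t$ accordingly, one uses that each $R_i$ is complete bipartite with equal parts to show that a socle monomial of $I(Y)_V$ exists, and that $V$ is a (minimal) vertex cover, if and only if the analogous statements hold for each $J_k$ on $B_k$. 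After reflecting, we may then assume $\lambda/\mu$ is a single upper triangular prime unmixed shape: $\mu = 0$ and all corner blocks of $\lambda$ are square.

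Step 2: the single prime shape. I would first prove necessity of (1). The local input is that for a $4$-cycle with edge weights $\alpha,\beta,\gamma,\delta$ read around it, the ideal $((z_1z_2)^\alpha,(z_2z_3)^\beta,(z_3z_4)^\gamma,(z_4z_1)^\delta)$ is unmixed if and only if $\alpha=\beta=\gamma=\delta$; when these are equal it equals $(z_1^\alpha,z_3^\alpha)\cap(z_2^\alpha,z_4^\alpha)$, and otherwise a monomial prime on three of the four variables appears as an embedded associated prime (a short socle computation). If the filling is non-constant on some block $R$ of $\lambda$, pick two adjacent cells of $R$ with distinct values; using that the corner blocks of $\lambda$ are square, one can always extend this pair to a full $2\times2$ square of cells of $\lambda/\mu$ — the configurations in which this is impossible occur only next to a non-square corner, which is excluded — and obtain a non-constant $4$-cycle, hence an embedded prime; so (1) holds. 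Granting (1), contract every block of $\lambda$ to a single cell: $\lambda$ becomes a staircase $(k,k-1,\ldots,1)$ and $Y$ a filling $\overline Y$ of it, and since each block is a constant-weight complete bipartite graph $K_{s,s}$, which behaves like a single weighted edge as far as the vertex-cover/socle test is concerned, $I(Y)$ is unmixed if and only if $I(\overline Y)$ is. Finally, for a staircase filling $\overline Y$: if it is weakly increasing along rows and columns then $I(\overline Y)$ is Cohen--Macaulay by \cite{HV}, hence unmixed (alternatively one verifies directly the primary decomposition $I(\overline Y) = \bigcap_C Q_C$ over the minimal vertex covers $C$, with $Q_C$ a suitable pure-power ideal in the variables of $C$); and conversely a strict decrease $w(i,j) > w(i,j+1)$ along a row, or $w(i,j) > w(i+1,j)$ down a column, forces, through cells automatically present in a staircase, either a non-constant $4$-cycle or a path $z_1-z_2-z_3-z_4$ whose weighted ideal $((z_1z_2)^a,(z_2z_3)^b,(z_3z_4)^c)$ with $a<b$ has the height-$3$ prime $(z_1,z_2,z_4)$ as an embedded associated prime. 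Translating back through the contraction gives (2)--(3).

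The hardest point is the gluing in Step 1 — showing that a socle monomial of $I(Y)_V$ genuinely decouples across the separating square block $R_i$. This is exactly where squareness of $R_i$ is used: the complete bipartite pattern with equal parts forces the two sides to saturate in the same way along the shared variables $V\cap V(R_i)$, and unmixedness of $I_{\lambda/\mu}$ is needed to control the vertex covers involved. The block-contraction lemma of Step 2 is conceptually routine but requires care: one must check that replacing a constant-weight $K_{s,s}$ block by a single weighted edge is compatible with all neighbouring blocks and with the vertex-cover bookkeeping of the whole diagram.
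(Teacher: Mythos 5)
Your overall strategy (vertex covers plus socle monomials, local obstructions on $4$-cycles and weighted paths, block contraction) is different from the paper's, which works throughout with \emph{associated radicals} $\sqrt{I(Y):u}$: the paper uses that every associated radical of an unmixed monomial ideal is unmixed (Lemma \ref{lem_rad_unmixed}, via \cite{JS} and \cite{HTT}) to push the weighted problem down to the already-classified squarefree case of Theorem \ref{thm_unmixed_skew_Ferrers}, and proves sufficiency by the explicit intersection $I(Y)=(I(Y_1)+(x_1^\omega,\ldots,x_p^\omega))\cap(I(Y_2)+(y_{n-m+1}^\omega,\ldots,y_n^\omega))$. However, your argument has a genuine gap at its central step: the passage from a local obstruction to an embedded prime of $I(Y)$. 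When you find a non-constant $4$-cycle (or a weighted path with $a<b$) and exhibit a height-$3$ embedded prime of the ideal generated by those three or four edges alone, this does \emph{not} yet produce an embedded prime of $I(Y)$. In your own framework, one must exhibit a non-minimal vertex cover $V$ of the whole graph $G_{\lambda/\mu}$ such that the localization $I(Y)_V$ has a socle monomial, and $I(Y)_V$ contains many generators beyond the four you analyzed (pure powers coming from edges with one endpoint outside $V$, and all other weighted edges inside $V$); these can destroy the socle element you computed. The globalization is exactly what the paper's computation $\sqrt{I(Y):x_1^{\omega}}=I_{\lambda^\ast/\mu^\ast}+(\text{variables})$ for a smaller, visibly mixed shape accomplishes, and your write-up supplies no substitute for it.

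The two steps you yourself flag as hardest are also the ones that are only asserted: (i) the decoupling of a socle monomial of $I(Y)_V$ across a separating square block $R_i$ --- note that the summands $J_k$ in your $I(Y)_V=J_1+\cdots+J_t$ share the variables of the connecting blocks, so this is not a mixed sum in disjoint variables and the reduction to single prime shapes does not follow formally; and (ii) the contraction of a constant-weight $K_{s,s}$ block to a single weighted edge preserving unmixedness in the presence of all neighbouring blocks. Both are plausible and could likely be proved, but as written the proposal establishes neither, and without them neither direction of the theorem is complete. I would suggest replacing the socle/vertex-cover bookkeeping by the associated-radical calculus (Lemmas \ref{lem_ass_radical_transfer} and \ref{lem_rad_unmixed}), which makes the necessity direction a direct reduction to the squarefree Theorem \ref{thm_unmixed_skew_Ferrers} and renders the gluing step a clean induction.
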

Finally, we come to classify sequentially Cohen-Macaulay skew tableau ideals. Diem, Minh, and Vu \cite{DMV} recently proved that the edge ideals of edge weighted graphs $I(G,\ww)$ is sequentially Cohen-Macaulay for all weight functions $\ww$ if and only if $G$ does not have an induced cycle $C_k$ for $k \neq 3,5$. Since most skew Ferrers graphs have an induced $4$-cycle, not all skew tableau ideals associated with a sequentially Cohen-Macaulay skew shape are sequentially Cohen-Macaulay. 

\begin{thm}\label{thm_scm_skew_tableau} Let $Y$ be a filling of positive integers in a skew shape $\lambda/\mu$. Assume that $G_{\lambda/\mu}$ is connected. Let $\lambda',\mu'$ be the conjugate partitions of $\lambda,\mu$ respectively. Then $I(Y)$ is sequentially Cohen-Macaulay if and only if one of the following conditions holds.
\begin{enumerate}
    \item $\lambda_1 > \lambda_2$. Let $\omega_1 = \max \{w(1,j) \mid j > \lambda_2\}$. Let $Y_1$ be the skew tableau obtained by deleting the first row of $Y$ and $Y_2$ be the skew tableau obtained by deleting all the $t$th column of $Y$ such that $w(1,t) \le \omega_1$. Then $Y_1$ and $Y_2$ are sequentially Cohen-Macaulay.
    \item $\mu_1 + 1 = \lambda_1$. Let $p$ be the largest index such that $\mu_p = \mu_1$. Let $\delta_m = \max \{ w(i,m) \mid i \ge p\}$. Let $Y_1$ be the skew tableau obtained by deleting the last column of $Y$ and $Y_2$ be the skew tableau obtained by deleting all the $s$th row of $Y$ such that $w(s,m) \le \delta_m$. Then $Y_1$ and $Y_2$ are sequentially Cohen-Macaulay.
    \item $\lambda_1' > \lambda_2'$. Let $\delta_1 = \max \{ w(i,1) \mid i \ge \lambda_2'+1\}$. Let $Y_1$ be the skew tableau obtained by deleting the first column of $Y$ and $Y_2$ be the skew tableau obtained by deleting all the $s$th row of $Y$ such that $w(s,1) \le \delta_1$. Then $Y_1$ and $Y_2$ are sequentially Cohen-Macaulay.
    \item $\mu_1' + 1 = \lambda_1'$. Let $q$ be the largest index such that $\mu_q' = \mu_1'$. Let $\omega_n = \max \{ w(n,j) \mid j \le q\}$. Let $Y_1$ be the skew tableau obtained by deleting the last row of $Y$ and $Y_2$ be the skew tableau obtained by deleting all the $t$th column of $Y$ such that $w(n,t) \le \omega_n$. Then $Y_1$ and $Y_2$ are sequentially Cohen-Macaulay.
\end{enumerate}
\end{thm}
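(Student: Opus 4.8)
The plan is to argue by induction on the number of boxes of $\lambda/\mu$, the engine being a reduction lemma stating that in case $(1)$ --- i.e.\ when $\lambda_1>\lambda_2$ --- the ideal $I(Y)$ is sequentially Cohen--Macaulay if and only if $I(Y_1)$ and $I(Y_2)$ are, and symmetrically in the other three cases. Two preliminary reductions make this enough. First, since $I(Y)$ splits as a sum of ideals in disjoint variable sets over the connected components of $G_{\lambda/\mu}$, and sequential Cohen--Macaulayness is inherited by and reconstructed from such a decomposition, one may ignore connectivity when passing to $Y_1$ and $Y_2$ and always reduce back to the connected case. Second, the Klein four-group generated by transposition (sending $\lambda/\mu$ to $\lambda'/\mu'$ and $x_i\leftrightarrow y_i$) and $180^{\circ}$ rotation of the diagram acts on skew tableau ideals by renaming the variables $x_i,y_j$ and preserves sequential Cohen--Macaulayness: transposition interchanges cases $(1)\leftrightarrow(3)$ and $(2)\leftrightarrow(4)$, rotation interchanges $(1)\leftrightarrow(4)$ and $(2)\leftrightarrow(3)$, so the four cases lie in a single orbit and it suffices to establish the reduction lemma in case $(1)$. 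Granting the lemma, the theorem follows once one also knows the base case: \emph{if $G_{\lambda/\mu}$ is connected and none of $\lambda_1>\lambda_2$, $\mu_1+1=\lambda_1$, $\lambda_1'>\lambda_2'$, $\mu_1'+1=\lambda_1'$ holds, then $I(Y)$ is not sequentially Cohen--Macaulay.} Indeed the ``if'' direction of the theorem is then immediate, and for ``only if'': if $I(Y)$ is sequentially Cohen--Macaulay then, by the base case, at least one shape condition holds, and the reduction lemma forces the corresponding $Y_1,Y_2$ to be sequentially Cohen--Macaulay, i.e.\ that full condition holds.

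For the reduction lemma in case $(1)$: since $\lambda_1>\lambda_2$, the columns $\lambda_2<j\le\lambda_1$ are pendant at $x_1$, and $\omega_1=\max\{w(1,j)\mid j>\lambda_2\}$ is the largest weight among these pendant edges. The idea is to run a short exact sequence
$$0\longrightarrow \frac{S}{(I(Y):u)}(-\deg u)\xrightarrow{\ \cdot u\ } \frac{S}{I(Y)}\longrightarrow \frac{S}{(I(Y),u)}\longrightarrow 0$$
for a well-chosen monomial $u$ built from $x_1$ and the pendant columns (peeling the pendant columns of $x_1$ one at a time via $u=y_j$ and dividing out the resulting powers of $x_1$ is one concrete implementation), together with the standard principle that sequential Cohen--Macaulayness passes both ways across such a sequence when its two outer terms are sequentially Cohen--Macaulay and the dimensions of the pieces interleave --- the ``shedding'' situation, here the weighted counterpart of the fact that $x_1$ is a shedding vertex of $G_{\lambda/\mu}$ whenever $\lambda_1>\lambda_2$ (cf.\ Theorem~\ref{thm_scm_skew_Ferrers}(1)). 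The quotient $S/(I(Y),u)$ is to be identified, up to a polynomial extension, with $S/I(Y_1)$: once the generators supported on $x_1$ are killed, $x_1$ becomes a free variable and drops out, giving ``delete row $1$.'' The kernel $S/(I(Y):u)$ is to be identified, after splitting off the pendant columns (which occur only through pure powers and so contribute free module extensions), with $S/I(Y_2)$: the colon turns the small-weight generators of row $1$ into pure powers of column variables, precisely for those columns $t$ with $w(1,t)\le\omega_1$, while the generators $(x_1y_t)^{w(1,t)}$ with $w(1,t)>\omega_1$ survive and reassemble --- together with the lower rows --- into the skew tableau $Y_2$ (this uses that deleting a column from a skew diagram again yields a skew diagram). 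Since $Y_1$ has fewer rows and $Y_2$ fewer columns, the induction applies. (Alternatively one may polarize $I(Y)$ to the edge ideal of a ``thickened'' skew Ferrers graph and run the same peeling as a sequence of deletions and links in the independence complex.)

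For the base case, assume $G_{\lambda/\mu}$ connected with $\lambda_1=\lambda_2$, $\lambda_1'=\lambda_2'$, $\mu_1+1<\lambda_1$, $\mu_1'+1<\lambda_1'$. Then the top-right region of $\lambda/\mu$ is a genuine $a\times b$ block with $a,b\ge2$; in particular $x_1,x_2$ have (at least) two common neighbours $y_t,y_{t+1}$, and the minimal prime $(x_1,\ldots,x_n)$ ($x$-type) and a minimal prime of $y$-type through this corner meet only in codimension at least $2$. Localizing $S/I(Y)$ at the prime $\p=(x_1,x_2,y_t,y_{t+1})$, the punctured spectrum is disconnected, so by Grothendieck's connectedness theorem the depth of $(S/I(Y))_\p$ is too small for $S/I(Y)$ to be Cohen--Macaulay on a component of maximal dimension through $\p$. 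Passing to the top-dimensional quotient of $S/I(Y)$ by its largest submodule of smaller dimension --- which must be Cohen--Macaulay if $S/I(Y)$ were sequentially Cohen--Macaulay --- one reaches a contradiction, the same corner obstruction surviving in the quotient. Hence $I(Y)$ is not sequentially Cohen--Macaulay.

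The main obstacle is the reduction lemma, specifically the bookkeeping matching the colon and quotient ideals with $I(Y_1)$ and $I(Y_2)$ in the presence of arbitrary weights: one must show that the accumulated auxiliary power of $x_1$ collapses to exactly $x_1^{\omega_1}$, that the columns deleted in the process are precisely those with $w(1,t)\le\omega_1$ (this is where $\omega_1$ being the maximum over the \emph{pendant} edges at $x_1$, rather than over all edges at $x_1$, is essential), and that the surviving high-weight columns together with the lower rows form the skew tableau $Y_2$. Equally delicate is verifying that each short exact sequence used is a genuine ``sequentially Cohen--Macaulay splitting,'' i.e.\ that the Krull dimensions of kernel, total module and cokernel are arranged so that sequential Cohen--Macaulayness is equivalent on the total module and on the two outer terms; and, in the base case, making the depth estimate rigorous in the non-reduced setting by working with the top-dimensional quotient.
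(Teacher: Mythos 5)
Your overall architecture (reduce by symmetry to case (1), peel off the pendant columns at $x_1$, induct on the size of the diagram) matches the paper's, but the engine you propose for the reduction lemma is a genuine gap. There is no ``standard principle'' that sequential Cohen--Macaulayness passes both ways across $0 \to S/(I:u) \to S/I \to S/(I,u) \to 0$. In the necessity direction the failure is fatal: even for squarefree ideals, $S/I$ sequentially Cohen--Macaulay does not imply $S/(I,x)$ sequentially Cohen--Macaulay (deleting a vertex from a sequentially Cohen--Macaulay complex need not preserve the property), so identifying $S/(I(Y),u)$ with $S/I(Y_1)$ tells you nothing about $Y_1$. In the sufficiency direction, the outer terms being sequentially Cohen--Macaulay plus a dimension interleaving does not force the middle term to be so without a genuine shedding-type hypothesis, and no off-the-shelf version of this exists for non-squarefree monomial ideals. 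The paper avoids both problems with the Jafari--Sabzrou criterion (Lemma \ref{lem_scm_radical}): $I(Y)$ is sequentially Cohen--Macaulay iff \emph{every} associated radical $\sqrt{I(Y):u}$ is. These radicals are squarefree and computed explicitly (Lemma \ref{lem_associated_radicals_edge_weight}); crucially, both $Y_1$ and $Y_2$ are reached through \emph{colons} --- $I(Y):y_m^{\omega_1}=I(Y_1)+(x_1)$ and $I(Y):x_1^{\omega_1}=I(Y_2)+(y_\ell \mid w(1,\ell)\le \omega_1)$ --- never through quotients $(I(Y),u)$, which is exactly what makes ``$I(Y)$ sequentially Cohen--Macaulay $\Rightarrow Y_1,Y_2$ sequentially Cohen--Macaulay'' work via Lemmas \ref{lem_ass_radical_transfer} and \ref{lem_associated_rad_2}. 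For sufficiency the paper classifies all associated radicals of $I(Y)$ and shows each is, up to added variables, an associated radical of $Y_1$ or of $Y_2$, finishing with Theorem \ref{thm_scm_skew_Ferrers} and Lemma \ref{lem_non_essential_vars}.

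Your base case is also not rigorous as written. Sequential Cohen--Macaulayness is not detected by depths of localizations, and the assertion that ``the same corner obstruction survives in the top-dimensional quotient'' is precisely what would need proof: the localization at $\p=(x_1,x_2,y_t,y_{t+1})$ can fail to be Cohen--Macaulay while the global module is still sequentially Cohen--Macaulay, unless you control the entire dimension filtration. The paper's route is purely combinatorial: $\sqrt{I(Y)}=I_{\lambda/\mu}$ is itself an associated radical (take $u=1$), so Lemma \ref{lem_scm_radical} reduces the base case to the squarefree statement of Lemma \ref{lem_shape_condition}, proved by exhibiting maximal independent sets that violate the shredding condition and invoking Theorem \ref{thm_vertex_decomposable}. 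To salvage your strategy you would have to either prove the two transfer statements for your short exact sequences or replace them by the associated-radical reduction.
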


\begin{exam}
    Let  $\lambda=(5,4,4)$ and $\mu=(2,1)$. Then $I_{\lambda/\mu}$ is sequentially Cohen-Macaulay. Consider the filling $Y$ of $\lambda/\mu$ as in the following. 
\begin{center}
     
\begin{tikzpicture}

\node (A) {
\begin{ytableau}
\none     & \none & *(white) 2& *(white)3 & *(red)1  \\
     \none   & *(white)2   & *(white)2   & *(white)1     \\
    *(yellow)2    & *(white)2 & *(white)4& *(white)3
\end{ytableau}
};
\node[right of=A, xshift=3cm] (B)  {
\begin{ytableau}
\none     & \none & *(white) 2& *(white)3  \\
     \none   & *(white)2   & *(white)2   & *(white)1     \\
    *(yellow)2    & *(white)2 & *(white)4& *(white)3
\end{ytableau}
};
\node[right of=B, xshift=2cm]  (C) {
\begin{ytableau}
*(white) 2& *(white)3  \\
 *(white)2   & *(white)1     \\
 *(white)4& *(white)3
\end{ytableau}
};

\draw[->] (A) -- (B);
\draw[->] (B) -- (C);

\end{tikzpicture}

\end{center}    
    By Theorem \ref{thm_scm_skew_tableau}, the sequentially Cohen-Macaulay property is preserved in the sequence of tableau ideals as in the figure. Since the last one corresponds to a non sequentially Cohen-Macaulay shape, we deduce that $I(Y)$ is not sequentially Cohen-Macaulay. Note that, if we change the weight of the yellow box to $3$ we would have a sequentially Cohen-Macaulay skew tableau ideal.
\end{exam}
In Section \ref{sec_scm_skew_Ferrers}, we recall necessary notations and prove Theorem \ref{thm_scm_Ferrers} and Theorem \ref{thm_scm_skew_Ferrers}. We then deduce a characterization of (generalized) Cohen-Macaulay and Buchsbaum skew Ferrers ideals. In Section \ref{sec_skew_tableau}, we prove Theorem \ref{thm_unmixed_skew_tableau} and Theorem \ref{thm_scm_skew_tableau}. We then  characterize (generalized) Cohen-Macaulay and Buchsbaum skew tableau ideals.

\section{Sequentially Cohen-Macaulay skew Ferrers ideals}\label{sec_scm_skew_Ferrers}
In this section, we classify all sequentially Cohen-Macaulay skew Ferrers ideals, and then all (generalized) Cohen-Macaulay and Buchsbaum skew Ferrers ideals. We first recall the definition of skew Ferrers ideals and skew Ferrers graphs. 

\begin{defn} Let $\lambda = (\lambda_1, \ldots, \lambda_n)$ and $\mu = (\mu_1,\ldots,\mu_n)$ be partitions such that $\lambda_i > \mu_i \ge 0$ for all $i = 1, \ldots, n$ and $\mu_n = 0$. Set $\lambda_1 = m$. The skew Ferrers ideal associated with the skew shape $\lambda/\mu$ is defined as
$$I_{\lambda/\mu}= (x_iy_j\mid 1\le i\le n, \mu_i+1\le j \le \lambda_i)  \subset S = \k[x_1,\ldots,x_n, y_1,\ldots,y_m].$$	
The skew Ferrers graph $G_{\lambda/\mu}$ is the bipartite graph on the vertex set $X \cup Y$ where $X = \{x_1, \ldots, x_n\}$, $Y = \{y_1, \ldots,y_m\}$ and $\{x_i,y_j\}$ is an edge of $G_{\lambda/\mu}$ if and only if $\mu_i + 1 \le j \le \lambda_i$.

When $\mu = 0$, the skew Ferrers ideal and skew Ferrers graph associated with $\lambda/\mu$ are called the Ferrers ideal and Ferrers graph associated with $\lambda$ and denoted by $I_\lambda$, $G_\lambda$ respectively.
\end{defn}

Changing the roles of $x$ and $y$ corresponds to taking the conjugate shape $\lambda'/\mu'$.

\begin{defn} Let $\lambda = (\lambda_1, \ldots, \lambda_n)$ be a partition. The conjugate of $\lambda$ is the partition $\lambda' = (\lambda'_1, \ldots, \lambda'_m)$ where $m = \lambda_1$ and $\lambda'_j$ is the number of boxes on the $j$th column of $\lambda$ for all $j = 1, \ldots,m$.    
\end{defn}
\begin{exam} Let $\lambda= (5,5,4)$. Then its conjugate partition is $\lambda'=(3, 3, 3,3,2)$.
\end{exam}
We make the following convention throughout the paper. For a skew shape $\lambda/\mu$ with $\lambda  = (\lambda_1,\ldots,\lambda_n)$ we assume that $\mu_n = 0$ and set $\lambda_0 = \lambda_1 + 1 = \mu_0 + 1$, $\lambda_j = 0$ for all $j > n$ and $\mu_j = -1$ for all $j > n$. For a graph $G$ on the vertex set $V$ and a vertex $v \in V$, we denote by $N_G(v)$ the set of neighbors of $v$ in $G$. The closed neighborhood of $v$ in $G$ is $N_G[v] = N_G(v) \cup \{v\}$. We have 
\begin{lem} \label{connected}
  The graph $G_{\lambda/\mu}$ is connected if and only if $\mu_{i-1} \le   \lambda_{i}-1$ for all $2\le i\le n$. 
\end{lem}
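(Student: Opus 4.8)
The plan is to work directly from the combinatorial description of the edges of $G_{\lambda/\mu}$. For each $i$, the neighborhood $N_{G_{\lambda/\mu}}(x_i)$ is exactly the set $\{y_j \mid \mu_i+1\le j\le \lambda_i\}$, an interval of consecutive $y$-vertices, which is nonempty since $\lambda_i>\mu_i$. Because $\lambda$ and $\mu$ are non-increasing, for $r\le s$ we have $\mu_r+1\ge \mu_s+1$ and $\lambda_r\ge \lambda_s$. The key observation is that the inequality $\mu_{i-1}\le \lambda_i-1$, i.e. $\mu_{i-1}+1\le \lambda_i$, is precisely the condition that the intervals $N_{G_{\lambda/\mu}}(x_{i-1})$ and $N_{G_{\lambda/\mu}}(x_i)$ contain a common vertex $y_j$ (any $j$ with $\mu_{i-1}+1\le j\le \lambda_i$ works); when it holds, $x_{i-1}$ and $x_i$ lie in the same connected component via the path $x_{i-1}-y_j-x_i$.

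For the ``if'' direction, assume $\mu_{i-1}\le \lambda_i-1$ for all $2\le i\le n$. Chaining the paths just described shows $x_1,\ldots,x_n$ all lie in a single connected component $C$. It then suffices to prove that every $y_j$ with $1\le j\le m=\lambda_1$ is adjacent to some $x_i$, hence also lies in $C$; equivalently, that $\bigcup_{i=1}^{n}\{\,j\mid \mu_i+1\le j\le \lambda_i\,\}=\{1,\ldots,m\}$. Given such a $j$, let $k$ be the largest index with $\lambda_k\ge j$, which exists since $\lambda_1=m\ge j$. If $k=n$, then $\mu_n+1=1\le j\le \lambda_n$. If $k<n$, then $\lambda_{k+1}\le j-1$, so the hypothesis at index $k+1$ gives $\mu_k\le \lambda_{k+1}-1\le j-2<j$, whence $\mu_k+1\le j\le \lambda_k$. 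In either case $y_j$ is an edge of $G_{\lambda/\mu}$, so $G_{\lambda/\mu}$ is connected.

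For the ``only if'' direction I argue by contraposition: suppose $\mu_{i-1}\ge \lambda_i$ for some $2\le i\le n$. Partition $Y$ as $U=\{y_j\mid \mu_{i-1}+1\le j\le m\}$ and $L=\{y_j\mid 1\le j\le \mu_{i-1}\}$. For a ``top'' vertex $x_r$ with $r\le i-1$ we have $\mu_r\ge \mu_{i-1}$, so $N_{G_{\lambda/\mu}}(x_r)\subseteq U$; for a ``bottom'' vertex $x_r$ with $r\ge i$ we have $\lambda_r\le \lambda_i\le \mu_{i-1}$, so $N_{G_{\lambda/\mu}}(x_r)\subseteq L$. Since $G_{\lambda/\mu}$ is bipartite with every edge joining some $x_r$ to a $y_j$, there is no edge between $\{x_1,\ldots,x_{i-1}\}\cup U$ and $\{x_i,\ldots,x_n\}\cup L$. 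Both of these sets are nonempty ($x_1$ lies in the first since $i\ge 2$, and $x_n$ in the second since $i\le n$), so $G_{\lambda/\mu}$ is disconnected, completing the contrapositive.

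I expect the only real care needed is the bookkeeping with the monotonicity of $\lambda$ and $\mu$ and the boundary conventions ($\mu_n=0$, $\lambda_1=m$), which is exactly what powers the interval-covering step in the ``if'' direction and the column-splitting separation in the ``only if'' direction; there is no substantive obstacle.
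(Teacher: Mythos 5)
Your proof is correct and follows essentially the same route as the paper: the ``only if'' direction splits the vertex set at column $\mu_{i-1}$ exactly as in the paper's disjoint-union argument, and the ``if'' direction uses the same overlapping neighbor-interval idea (the paper phrases it as consecutive paths $P_i$ intersecting), with your interval-covering step merely making explicit the detail that every $y_j$ is hit, which the paper leaves implicit. The only blemish is the harmless slip ``$y_j$ is an edge,'' which should read ``$\{x_k,y_j\}$ is an edge.''
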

\begin{proof}  Assume that there exists $2\le i\le n$  such that $\mu_{i-1} \ge  \lambda_{i}$. Let $G_1$ and $G_2$ be two induced subgraphs of $G_{\lambda/\mu}$ on the vertex set $\{x_i,\ldots,x_n\}\cup \{y_1,\ldots,y_{\lambda_i}\}$ and $\{x_1,\ldots,x_{i-1}\}\cup \{y_{\mu_{i-1}+1},\ldots,y_{m}\}$, respectively. Then $G_{\lambda/\mu}$  is disjoint union of $G_1$ and $G_2$. Thus $G_{\lambda/\mu}$  is not connected. 
 
Now assume that $\mu_{i-1} \le \lambda_i - 1$ for all $2 \le i \le n$. Under this assumption,  $G$ is the union of the paths $P_i: x_iy_{\mu_i+1}, \ldots, x_iy_{\lambda_i}$ and the path $P_i$ intersects $P_{i+1}$ non trivially for all $i = 1,\ldots, n-1$. Hence, $G$ is connected.  
\end{proof}

Let $I = J + K$ be the mixed sum of two nonzero homogeneous ideals living in different polynomial rings. Let $P$ be one of the following properties: sequentially Cohen-Macaulay, Cohen-Macaulay, Buchsbaum, generalized Cohen-Macaulay, unmixed. It is well known that $I$ has property $P$ if and only if both $J$ and $K$ have property $P$. Hence, we may assume that $G_{\lambda/\mu}$ is connected throughout the rest of the paper. To establish the sequentially Cohen-Macaulay property of skew tableau ideals, we utilize the notion of associated radicals of monomial ideals.

\begin{defn} Let $I$ be a monomial ideal in a polynomial ring $S$. A monomial ideal of the form $\sqrt{I:u}$ for a monomial $u \notin I$ is called an associated radical of $I$.
\end{defn}

\begin{lem} \label{lem_ass_radical_transfer} Let $I$ be a monomial ideal. Let $J = \sqrt{I:u}$ be an associated radical of $I$ and $K = J : v$ be an associated radical of $J$. Then $K$ is an associated radical of $I$.    
\end{lem}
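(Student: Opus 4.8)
The statement claims that if $J=\sqrt{I:u}$ is an associated radical of $I$ and $K=J:v$ is an associated radical of $J$ (so in particular $K$ is already radical, being obtained from the radical ideal $J$ by a colon operation — actually one should be a bit careful: $J:v$ need not be radical in general, but here $J$ is squarefree, and a colon of a squarefree monomial ideal by a monomial is again squarefree, hence radical, so $K=\sqrt{J:v}$), then $K$ is an associated radical of $I$, i.e. $K=\sqrt{I:w}$ for some monomial $w\notin I$. The natural guess is $w=uv$, and the plan is to prove $\sqrt{I:uv}=K$.

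First I would record the elementary identity $I:uv=(I:u):v$, valid for any ideal $I$ and any elements $u,v$. Taking radicals, $\sqrt{I:uv}=\sqrt{(I:u):v}$. Next I would use the general fact that for a monomial (or any) ideal $\mathfrak a$ and a monomial $v$ one has $\sqrt{\mathfrak a:v}=\sqrt{\sqrt{\mathfrak a}:v}$; this is because $\mathfrak a$ and $\sqrt{\mathfrak a}$ have the same minimal primes, and colon-then-radical only depends on the set of minimal primes not containing $v$ (concretely, $\sqrt{\mathfrak a:v}=\bigcap_{\mathfrak p\in\operatorname{Min}(\mathfrak a),\,v\notin\mathfrak p}\mathfrak p$, and likewise for $\sqrt{\mathfrak a}$). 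Applying this with $\mathfrak a=I:u$ gives $\sqrt{(I:u):v}=\sqrt{\sqrt{I:u}:v}=\sqrt{J:v}=K$, the last equality because $J:v$ is squarefree. Combining, $\sqrt{I:uv}=K$.

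It then remains to check that $uv\notin I$, which is exactly the nondegeneracy condition in the definition of an associated radical. This is where a little care is needed: $u\notin I$ is given, and $v\notin J=\sqrt{I:u}$ is given. If $uv\in I$, then $v\in I:u\subseteq\sqrt{I:u}=J$, contradicting $v\notin J$. Hence $uv\notin I$, and $K=\sqrt{I:uv}$ is a genuine associated radical of $I$.

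I do not expect any real obstacle here; the only point deserving attention is the interchange of radical and colon in the second paragraph (equivalently, phrasing everything in terms of minimal primes), and the verification that $J:v$ is itself radical so that $K=J:v$ coincides with $\sqrt{J:v}$ — both follow from standard facts about squarefree monomial ideals, and one could alternatively just carry the $\sqrt{\phantom{x}}$ through all computations and invoke $\sqrt{\sqrt{\mathfrak a}}=\sqrt{\mathfrak a}$ at the end.
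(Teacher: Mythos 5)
There is a genuine gap. The step on which everything hinges --- the interchange $\sqrt{\mathfrak a:v}=\sqrt{\sqrt{\mathfrak a}\,:v}$, applied with $\mathfrak a=I:u$ --- is false, and with it your choice $w=uv$ fails. Only the inclusion $\sqrt{\mathfrak a:v}\subseteq\sqrt{\sqrt{\mathfrak a}\,:v}$ holds in general (from $\mathfrak a\subseteq\sqrt{\mathfrak a}$). Your formula $\sqrt{\mathfrak a:v}=\bigcap_{\mathfrak p\in\Min(\mathfrak a),\,v\notin\mathfrak p}\mathfrak p$ is not correct: the right description runs over an irredundant primary decomposition $\mathfrak a=\bigcap Q_i$ (so embedded primes matter), and the relevant condition is $v\notin Q_i$, not $v\notin\sqrt{Q_i}$. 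Concretely, take $I=(a^2b^2)\subseteq\k[a,b]$, $u=1$, $v=a$. Then $J=\sqrt{I}=(ab)$ and $K=J:a=(b)$, but
$$\sqrt{I:uv}=\sqrt{(a^2b^2):a}=\sqrt{(ab^2)}=(ab)\neq K,$$
so $K$ is not $\sqrt{I:uv}$. (Your remaining points --- that $J:v$ is again squarefree, hence radical, and that $uv\notin I$ --- are fine, but they are not where the difficulty lies.)

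The repair is to divide by a sufficiently high power of $v$, which is what the paper does: set $s$ to be the maximal exponent of a variable occurring in a minimal generator of $I$ and show $K=\sqrt{I:(uv^s)}$. Every minimal generator of $I:(uv^s)$ has exponents at most $s$, so a squarefree monomial $f$ lies in $\sqrt{I:(uv^s)}$ iff $f^s\in I:(uv^s)$, iff $(fv)^s\in I:u$, iff $fv\in\sqrt{I:u}=J$, iff $f\in J:v=K$. In the example above $s=2$ and indeed $\sqrt{(a^2b^2):a^2}=\sqrt{(b^2)}=(b)=K$. Your nondegeneracy check carries over verbatim: if $uv^s\in I$ then $v^s\in I:u$, so $v\in J$, contradicting $v\notin J$.
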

\begin{proof} Let $s$ be the maximal exponent of a variable that appears in a minimal generator of $I$. We claim that $K = \sqrt{I:(uv^s)}$. Let $f$ be a squarefree monomial in $\sqrt{I:(uv^s)}$. We have $f^s \in I : (uv^s)$. In particular, $(fv) \in \sqrt{I:u}$. Hence, $f \in K = J : v $. Conversely, let $f$ be a squarefree monomial in $K = J : v$. Then $f v \in J$. Thus, $(fv)^s \in I:u$. Hence, $f \in \sqrt{I:(uv^s)}$. The conclusion follows.    
\end{proof}

We will use the following results to establish the sequentially Cohen-Macaulay property of skew tableau ideals.

\begin{lem}\label{lem_scm_radical} {\rm \cite[Proposition 2.23]{JS}} Let $I$ be a monomial ideal. Then $I$ is sequentially Cohen-Macaulay if and only if all associated radicals  of $I$  are sequentially Cohen-Macaulay.  
\end{lem}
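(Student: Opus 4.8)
The plan is to reduce the statement to squarefree monomial ideals --- where it becomes a combinatorial fact about links --- and to handle the remaining reduction by polarization. Suppose first that $I=I_\Delta$ is a Stanley--Reisner ideal. If $u\notin I_\Delta$ is a monomial then $F:=\supp(u)$ is a face of $\Delta$, and the combinatorial description of Stanley--Reisner ideals shows at once that $\sqrt{I_\Delta:u}$ is the extension to $S$ of the Stanley--Reisner ideal of the link $\operatorname{lk}_\Delta F$; hence $S/\sqrt{I_\Delta:u}$ is the Stanley--Reisner ring of $\operatorname{lk}_\Delta F$ times a polynomial ring and is sequentially Cohen--Macaulay exactly when $\operatorname{lk}_\Delta F$ is, with $u=1$ recovering $\Delta$ itself. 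So in this case the claim reads: $\Delta$ is sequentially Cohen--Macaulay iff every link $\operatorname{lk}_\Delta F$ is, and only one direction has content. For that one I would invoke Duval's theorem (that $\Delta$ is sequentially Cohen--Macaulay precisely when each pure skeleton $\Delta^{[k]}$ is Cohen--Macaulay), Reisner's criterion (links of Cohen--Macaulay complexes are Cohen--Macaulay), and the bookkeeping identity $(\operatorname{lk}_\Delta F)^{[j]}=\operatorname{lk}_{\Delta^{[\,j+|F|\,]}}F$: if $\Delta$ is sequentially Cohen--Macaulay then every $\Delta^{[k]}$ is Cohen--Macaulay, hence every $\operatorname{lk}_{\Delta^{[k]}}F$ is Cohen--Macaulay, hence every pure skeleton of $\operatorname{lk}_\Delta F$ is Cohen--Macaulay, hence $\operatorname{lk}_\Delta F$ is sequentially Cohen--Macaulay by Duval again.

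For a general monomial ideal $I$ I would pass to a polarization $I^{\operatorname{pol}}=I_\Delta\subset\tilde S$. The depolarizing differences of variables form a regular sequence on $\tilde S/I^{\operatorname{pol}}$, and the sequentially Cohen--Macaulay property ascends and descends along a regular sequence (and is unaffected by adjoining polynomial variables), so $I$ is sequentially Cohen--Macaulay iff $I^{\operatorname{pol}}$ is. To transport associated radicals, write $I=(m_1,\dots,m_r)$ with $m_k=\prod_i x_i^{a_{ki}}$ and $a_i=\max_k a_{ki}$; one computes $\sqrt{I:\prod_i x_i^{c_i}}=\bigl(\prod_{i:\,a_{ki}>c_i}x_i\ \big|\ k\bigr)$, whereas for a face $F$ of $\Delta$ with row-$i$ part $F_i$ one has $I_{\operatorname{lk}_\Delta F}=\bigl(\prod_i\prod_{j\in[a_{ki}]\setminus F_i}x_{i,j}\ \big|\ k\bigr)$, which after dropping the absent variables of $F$ and renaming the survivors in each row is the polarization of $\bigl(\prod_i x_i^{\,|[a_{ki}]\setminus F_i|}\ \big|\ k\bigr)$. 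Taking $F$ to be the face that omits exactly the $(c_i+1)$st vertex of row $i$ when $c_i<a_i$ and all of row $i$ otherwise --- which is a face of $\Delta$ precisely because $\prod_i x_i^{c_i}\notin I$ --- this polarized ideal equals $\sqrt{I:\prod_i x_i^{c_i}}$. Thus each associated radical of $I$ is sequentially Cohen--Macaulay iff the link of $\Delta$ at the corresponding ``special'' face (one omitting at most one vertex per row) is, and feeding the squarefree case through this dictionary gives the ``only if'' direction: $I$ sequentially Cohen--Macaulay $\Rightarrow I^{\operatorname{pol}}$ sequentially Cohen--Macaulay $\Rightarrow$ all links of $\Delta$ sequentially Cohen--Macaulay $\Rightarrow$ all special links $\Rightarrow$ all $\sqrt{I:u}$ sequentially Cohen--Macaulay.

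The main obstacle is the ``if'' direction for non-squarefree $I$: one knows only that the links of $\Delta$ at the special faces are sequentially Cohen--Macaulay and must upgrade this to $\Delta$ itself (the link at the empty face) being so. The links at non-empty faces can be brought under control by induction on $\sum_k\deg m_k$: the link $\operatorname{lk}_\Delta G$ at a non-empty face $G$ is, after the same renaming, the polarization of a monomial ideal $J_G$ of strictly smaller total degree whose associated radicals are precisely the special links of $\operatorname{lk}_\Delta G$, i.e.\ the links $\operatorname{lk}_\Delta H$ at special faces $H\supseteq G$ of $\Delta$ --- already known to be sequentially Cohen--Macaulay --- so the inductive hypothesis yields $J_G$, hence $\operatorname{lk}_\Delta G$, sequentially Cohen--Macaulay. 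What then remains, and where I expect the genuine difficulty, is the empty face: one must show that the reduced homology of each skeleton $\Delta^{[k]}$ of the polarized complex vanishes below its dimension, information not literally carried by the associated radicals (which live on the original $n$ variables and can have strictly smaller Betti numbers). I would settle this either by a direct homological argument --- using Takayama's formula, which expresses every $\ZZ^n$-graded component of $H^i_{\m}(S/I)$ as the reduced cohomology of a restriction of the complex of an associated radical of $I$, and then reading off sequential Cohen--Macaulayness from Schenzel's $\operatorname{Ext}$-criterion --- or by a finer analysis of a single polarization step, folded into the induction above in place of the full polarization.
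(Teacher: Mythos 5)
The paper offers no proof of this lemma---it is quoted verbatim from \cite[Proposition 2.23]{JS}---so your attempt must stand on its own. Your ``only if'' direction is essentially complete: the formula $\sqrt{I:x^{\mathbf{c}}}=\bigl(\prod_{i:\,a_{ki}>c_i}x_i\mid k\bigr)$ is correct, its identification with the depolarized link of the polarization complex at a ``special'' face is correct (though the face you want must \emph{contain} all of row $i$ when $c_i\ge a_i$, not omit it, or the row would contribute a nontrivial factor to every generator), and the chain Duval $\to$ Reisner $\to$ the identity $(\operatorname{lk}_\Delta F)^{[j]}=\operatorname{lk}_{\Delta^{[j+|F|]}}F$ is the standard correct proof that links of sequentially Cohen--Macaulay complexes are sequentially Cohen--Macaulay. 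One smaller slip: the associated radicals of $J_G$ are \emph{not} the links at special faces $H\supseteq G$ of $\Delta$ (once $G$ omits two vertices of a row, no $H\supseteq G$ is special). Your induction survives anyway, because a direct computation shows every $\sqrt{J_G:x^{\mathbf{v}}}$ coincides with $\sqrt{I:x^{\mathbf{c}}}$ for the $\mathbf{c}$ with $c_i=\max\{t:\ |[t]\setminus G_i|\le v_i\}$, hence is an associated radical of $I$ and is covered by the hypothesis---but that is not the justification you gave.

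The genuine gap is the one you flag yourself: in the ``if'' direction everything reduces to proving $\widetilde{H}_j(\Delta^{[k]})=0$ for $j<k$, where $\Delta$ is the polarization complex, and you do not prove it. This is not a loose end but the entire content of the implication: the hypothesis only constrains ideals in the original $n$ variables, the polarized skeletons live on many more vertices, and---as you yourself observe---their homology is not formally determined by the associated radicals. Neither of your two proposed repairs is executed, so the argument as written establishes only one direction. For the record, the cited source does follow the first of your two escape routes: Takayama's formula expresses each graded component $H^i_{\m}(S/I)_{\mathbf{a}}$ as reduced cohomology of a degree complex, those degree complexes are exactly links of the complexes of the associated radicals $\sqrt{I:x^{\mathbf{a}^+}}$, and sequential Cohen--Macaulayness is then read off from the resulting control of the modules $\mathrm{Ext}^{n-i}_S(S/I,S)$. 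Carrying that computation out (or the single-step depolarization analysis you mention) is what is still missing from your proof.
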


\begin{defn} A graph $G$ is vertex decomposable if $G$ is a totally disconnected graph, i.e., $G$ has no edges or if there exists a vertex $v$ of $G$ such that 
\begin{enumerate}
    \item $G \backslash v$ and $G \backslash N[v]$ are both vertex decomposable, and
    \item no independent set in $G \backslash N[v]$ is a maximal independent set in $G\backslash v.$
\end{enumerate}
A vertex $v$ satisfying the latter two conditions is called a shredding vertex of $G$.
\end{defn}
We refer to \cite{V} and \cite{W} for more details about vertex decomposable graphs.

\begin{thm}\label{thm_vertex_decomposable} {\rm \cite[Theorem  2.10]{V}} Let $G$ be a bipartite graph. Then $I(G)$ is sequentially Cohen-Macaulay if and only if $G$ is vertex decomposable.     
\end{thm}

\begin{lem}\label{lem_non_vertex_decomp_complete_bipartite} Let $G = K_{U,V}$ is a complete bipartite graph. Assume that $U, V$ have at least $2$ vertices. Then $G$ is not vertex decomposable. 
\end{lem}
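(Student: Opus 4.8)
The plan is to argue by contradiction, showing that no vertex of $G = K_{U,V}$ can serve as a shredding vertex. Since $|U|,|V|\ge 2$, the graph $G$ has at least one edge, so it is not totally disconnected; hence if $G$ were vertex decomposable it would have a shredding vertex $v$. By the symmetry between $U$ and $V$ we may assume $v\in U$.

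Next I would compute the two relevant induced subgraphs. Since every vertex of $U$ is adjacent to every vertex of $V$ and to no other vertex, we have $N_G[v]=\{v\}\cup V$, so $G\setminus N[v]$ is the induced subgraph on $U\setminus\{v\}$, which has no edges. On the other hand $G\setminus v = K_{U\setminus\{v\},\,V}$, again a complete bipartite graph, and because $|U|\ge 2$ the part $U\setminus\{v\}$ is nonempty while $|V|\ge 2$. In a complete bipartite graph with both parts nonempty, the maximal independent sets are exactly the two parts; in particular $U\setminus\{v\}$ is a maximal independent set of $G\setminus v$.

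The contradiction then comes from condition (2) in the definition of a shredding vertex: the set $S=U\setminus\{v\}$ is an independent set of $G\setminus N[v]$ (indeed all of its vertex set), yet it is a maximal independent set of $G\setminus v$. This is precisely forbidden, so $v$ cannot be a shredding vertex, and since $v$ was arbitrary, $G$ is not vertex decomposable.

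There is essentially no obstacle here; the only points requiring a word of care are the use of the hypotheses $|U|,|V|\ge 2$ (to guarantee that $U\setminus\{v\}$ and $V$ are both nonempty, so that the maximal independent sets of a complete bipartite graph are exactly its two parts) and invoking the $U$–$V$ symmetry to reduce to the case $v\in U$.
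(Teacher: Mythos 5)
Your proof is correct and follows essentially the same route as the paper: pick a putative shredding vertex $v$ (in $U$, by symmetry), observe that $U\setminus\{v\}$ is the vertex set of $G\setminus N[v]$ and is simultaneously a maximal independent set of $G\setminus v$, contradicting condition (2) in the definition. The only cosmetic difference is that you spell out why $U\setminus\{v\}$ is maximal in the complete bipartite graph $G\setminus v$, which the paper leaves implicit.
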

\begin{proof} Assume that $U = \{x_1,\ldots,x_n\}$ and $V = \{y_1,\ldots,y_m\}$ with $n,m\ge 2$. Assume by contradiction that $G$ is vertex decomposable. By definition, we may assume that $x_1$ is a shredding vertex of $G$. Then $G\backslash N_G[x_1]$ is the totally disconnected graph on $\{x_2,\ldots,x_n\}$. The set $\{x_2, \ldots,x_n\}$ is also a maximal independent set in $G\backslash x_1$, a contradiction. The conclusion follows.    
\end{proof}

We now classify sequentially Cohen-Macaulay Ferrers ideals. We first show that the condition of saturation of a partition is preserved when taking the conjugate. 

\begin{lem}\label{lem_conjugate} Let $\lambda$ be a partition and $\mu$ be its conjugate partition. Assume that $\lambda$ is saturated. Then $\mu$ is saturated.
\end{lem}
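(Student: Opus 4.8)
The plan is to translate both the hypothesis and the conclusion into statements about the multiplicity data of $\lambda$ and to track that data under conjugation. Write $v_1 > v_2 > \cdots > v_k > 0$ for the distinct parts of $\lambda$, say $v_l$ occurring with multiplicity $m_l \ge 1$, and put $v_{k+1} = 0$ and $M_l = m_1 + \cdots + m_l$. A direct count of columns gives the conjugation dictionary: $\mu$ has distinct parts $M_1 < M_2 < \cdots < M_k$, the part $M_l$ occurring with multiplicity $v_l - v_{l+1}$. In particular the set of parts of $\mu$ is $\{M_1, \ldots, M_k\}$, and $M_l = l$ whenever $m_1 = \cdots = m_l = 1$. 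I would record this dictionary first.

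Next I would reduce to the only interesting case. If $\mu$ is strictly decreasing there is nothing to prove, so assume it is not, and let $l^*$ be the largest index with $v_{l^*} - v_{l^*+1} \ge 2$ (it exists, since $\mu$ has a repeated part). Listing the parts of $\mu$ in weakly decreasing order as $M_k^{\,v_k-v_{k+1}}, M_{k-1}^{\,v_{k-1}-v_k}, \ldots, M_1^{\,v_1-v_2}$ and using that $M_l$ is increasing in $l$, the first repeated part of $\mu$ is exactly $M_{l^*}$. Hence the saturation condition to be proved for $\mu$ is precisely the inclusion $\{1, 2, \ldots, M_{l^*}\} \subseteq \{M_1, \ldots, M_k\}$.

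Finally I would verify this inclusion by splitting along the dichotomy in the definition of ``saturated'' applied to $\lambda$. If $\lambda$ is strictly decreasing, then every $m_l = 1$, so $M_l = l$, $\{M_1, \ldots, M_k\} = \{1, \ldots, k\}$, and $M_{l^*} = l^* \le k$, which settles it. If $\lambda$ is not strictly decreasing, let $j_0$ be the smallest index with $m_{j_0} \ge 2$; since $m_1 = \cdots = m_{j_0-1} = 1$ one checks $j_0$ is the smallest index with $\lambda_{j_0} = \lambda_{j_0+1}$ and $\lambda_{j_0} = v_{j_0}$, so saturation of $\lambda$ reads $\{1, \ldots, v_{j_0}\} \subseteq \{v_1, \ldots, v_k\}$. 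This forces $v_l - v_{l+1} = 1$ for every $l \ge j_0$, whence $l^* \le j_0 - 1$; and for $l \le j_0 - 1$ we have $m_l = 1$, so $M_l = l$ and $\{M_1, \ldots, M_{j_0-1}\} = \{1, \ldots, j_0-1\}$. Therefore $M_{l^*} = l^* \le j_0 - 1$ and $\{1, \ldots, M_{l^*}\} \subseteq \{1, \ldots, j_0-1\} \subseteq \{M_1, \ldots, M_k\}$, as required. The argument is purely combinatorial and involves no obstacle of substance; the only two points that must be gotten right are the conjugation dictionary and the identification of the first repeated part of $\mu$ with $M_{l^*}$, after which the rest is a short case check.
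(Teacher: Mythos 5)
Your proof is correct and rests on the same two facts the paper's proof uses: conjugation turns a strictly decreasing prefix $\lambda_1 > \cdots > \lambda_i$ into the presence of $\{1,\ldots,i-1\}$ among the parts of $\mu$, and turns the presence of all of $p, p-1, \ldots, 1$ among the parts of $\lambda$ into strict decrease $\mu_1 > \cdots > \mu_{p+1}$; your multiplicity dictionary ($v_l$ with multiplicity $m_l$ versus $M_l = m_1+\cdots+m_l$ with multiplicity $v_l - v_{l+1}$) is just an explicit packaging of these. The paper states the two observations and leaves the remaining bookkeeping to the reader, whereas you carry it out in full, including the correct identification of the first repeated part of $\mu$ as $M_{l^*}$ — same approach, no gap.
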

\begin{proof} We assume that $n$ is the length of $\lambda$ and $\lambda_1 = m$. We have the following simple observations.

\begin{enumerate}
    \item If $\lambda_1 > \cdots > \lambda_i$ then $\{i-1,\ldots,1\} \subseteq \{\mu_1,\ldots,\mu_m\}$.
    \item If $\{p,p-1,\ldots,1\} \subseteq \{\lambda_1,\ldots,\lambda_n\}$ then $\mu_1 > \cdots > \mu_{p+1}.$
\end{enumerate}
The conclusion then follows from the definition of saturation.
\end{proof}

\begin{lem}\label{lem_nec_Ferrers} Assume that $\lambda$ is not saturated. Then $I_\lambda$ is not sequentially Cohen-Macaulay.  
\end{lem}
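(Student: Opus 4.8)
The plan is to show the contrapositive: if $I_\lambda$ is sequentially Cohen-Macaulay, then $\lambda$ is saturated. Equivalently, assuming $\lambda$ is not saturated, I want to produce an associated radical of $I_\lambda$ (or use an inductive structural argument) that fails to be sequentially Cohen-Macaulay, which by Lemma~\ref{lem_scm_radical} forces $I_\lambda$ itself to fail. By Theorem~\ref{thm_vertex_decomposable}, since $G_\lambda$ is bipartite, this amounts to exhibiting an induced subgraph-type obstruction or a failure of vertex decomposability that persists under the relevant deletions.

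First I would unwind the negation of saturation. Since $\lambda$ is not saturated, it is not strictly decreasing, so there is a smallest index $i$ with $\lambda_i = \lambda_{i+1}$; and the defining chain condition fails, i.e.\ $\{\lambda_i, \lambda_i - 1, \ldots, 1\} \not\subseteq \{\lambda_1, \ldots, \lambda_n\}$. Pick the largest value $c$ with $1 \le c \le \lambda_i$ that does \emph{not} occur among the parts $\lambda_1, \ldots, \lambda_n$; so no row of $\lambda$ has length exactly $c$, while some row has length $c+1$ (or $c = \lambda_i$ and the "missing column jump" is at the very top of the doubled block). The key point is that passing to the conjugate via Lemma~\ref{lem_conjugate} (or rather its contrapositive, which I would state as a companion lemma: if $\lambda$ is not saturated then $\lambda'$ is not saturated) lets me normalize to the most convenient orientation. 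In the doubled-part region together with the missing-length witness, one finds two consecutive rows of equal length sitting above (or below) a strictly longer configuration, which is precisely the combinatorial seed of a complete bipartite $K_{2,2}$-type obstruction to vertex decomposability, in the spirit of Lemma~\ref{lem_non_vertex_decomp_complete_bipartite}.

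The concrete execution: I would localize at a carefully chosen monomial $u$ so that $\sqrt{I_\lambda : u}$ corresponds to a Ferrers graph on a sub-rectangle which is a complete bipartite graph $K_{U,V}$ with $|U|, |V| \ge 2$. The rows of equal length $\lambda_i = \lambda_{i+1}$ give at least two $x$-vertices with the same neighborhood; the failure of the chain condition (the missing length $c$) guarantees that after suitable localization there are at least two $y$-columns that are simultaneously present on those rows and absent from the rows that would otherwise "shred" them — i.e.\ no shredding vertex can separate this $K_{2,2}$. Then Lemma~\ref{lem_non_vertex_decomp_complete_bipartite} says this localized graph is not vertex decomposable, hence by Theorem~\ref{thm_vertex_decomposable} its edge ideal is not sequentially Cohen-Macaulay, and by Lemma~\ref{lem_scm_radical} neither is $I_\lambda$. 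Care is needed because an associated radical of $I_\lambda$ is again a squarefree monomial ideal but not obviously a Ferrers ideal; here I would use that localizing a Ferrers ideal at a variable or product of variables again yields a Ferrers-type ideal on a smaller diagram (deleting a row/column), or handle it directly on the graph side via the deletions $G_\lambda \backslash v$ and $G_\lambda \backslash N[v]$ appearing in the vertex-decomposability recursion.

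The main obstacle I anticipate is the bookkeeping to pin down \emph{exactly} which vertex deletions to perform so that the resulting graph is genuinely a complete bipartite graph on $\ge 2 + 2$ vertices and not something larger where a shredding vertex might still exist — in other words, making the "no shredding vertex" claim airtight rather than just plausible. A clean way around this is to prove the stronger statement that in a non-saturated $\lambda$ there is an induced subgraph of $G_\lambda$ obtained by a sequence of admissible (vertex-decomposability) deletions that equals $K_{2,2}$, and then invoke the standard fact that vertex decomposability is inherited by the links and deletions in the recursion, so a $K_{2,2}$ appearing at any stage obstructs the whole. I would organize the argument by first reducing (via the conjugate) to the case where the missing length $c$ satisfies $c+1 \in \{\lambda_1, \ldots, \lambda_n\}$ with the doubled part strictly above it, and then the $K_{2,2}$ is visible on rows $i, i+1$ and columns $c, c+1$ after deleting the rows longer than $\lambda_i$ and the columns $\le c-1$ that are covered elsewhere.
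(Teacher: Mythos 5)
Your overall strategy coincides with the paper's: produce an associated radical of $I_\lambda$ equal to $I(K_{U,V})$ plus some variables with $|U|,|V|\ge 2$, and conclude from Lemma~\ref{lem_scm_radical}, Theorem~\ref{thm_vertex_decomposable} and Lemma~\ref{lem_non_vertex_decomp_complete_bipartite}; the paper does this by locating indices $i<j$ with $\lambda_i=\cdots=\lambda_j\ge\lambda_{j+1}+2$ and coloning by $x_{j+1}y_{\lambda_i+1}$. However, the difficulty you flag at the end --- making sure the localization really yields a complete bipartite graph on $\ge 2+2$ vertices and nothing more --- is not mere bookkeeping but a genuine gap, and neither of your proposed fixes closes it.

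Concretely, take $\lambda=(3,3,2)$, which is not saturated (the part $1$ is missing). Here $i=1$ and your $c=1$, so the alleged $K_{2,2}$ sits on rows $1,2$ and columns $1,2$; but $x_3$ is adjacent to both $y_1$ and $y_2$, and its only non-neighbour $y_3$ has $N(y_3)=\{x_1,x_2\}$, so any colon that removes $x_3$ also removes $y_1,y_2$ or $x_1,x_2$. In fact every graph $G_\lambda\setminus N[W]$ for a nonempty independent set $W$ is totally disconnected or a star, hence every proper associated radical of $I_{(3,3,2)}$ is sequentially Cohen--Macaulay and no complete bipartite obstruction is reachable; $\lambda=(4,4,3,1)$ behaves the same way. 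Your fallback of reaching a $K_{2,2}$ ``by a sequence of admissible deletions'' fails on the same examples, and note also that merely containing an induced $K_{2,2}$ obstructs nothing: $\lambda=(2,2,1)$ is saturated and $G_{(2,2,1)}$ is vertex decomposable although it contains an induced $K_{2,2}$; the obstruction requires the $K_{2,2}$ to be the entire graph of some associated radical. So your argument, like the paper's written proof, covers only those non-saturated $\lambda$ possessing a repeated block immediately followed by a drop of at least $2$; the remaining non-saturated shapes, where the largest missing part is separated from the doubled block by intermediate rows occurring only once, need a separate argument --- for instance a direct check that $G_\lambda$ has no shredding vertex, or an induction on deleting the first row or column in the style of the proof of Theorem~\ref{thm_scm_Ferrers}.
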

\begin{proof} By definition, there exist indices $1 \le i < j \le n$ such that $i$ is the smallest index for which  $\lambda_i = \cdots = \lambda_j \ge  \lambda_{j+1} +2$. Let $p = \lambda_i$. We have 
$$I_{\lambda} : x_{j+1} y_{p+1} = (x_ly_k \mid i\le l \le j, \lambda_{j+1} +1 \le k \le \lambda_j) + (x_1,\ldots,x_{i-1},y_1,\ldots,y_q)$$
where $q = \lambda_{j+1}$. The conclusion follows from Theorem \ref{thm_vertex_decomposable}, Lemma \ref{lem_scm_radical}, and Lemma \ref{lem_non_vertex_decomp_complete_bipartite}.    
\end{proof}
\begin{proof}[Proof of Theorem \ref{thm_scm_Ferrers}] By Theorem \ref{thm_vertex_decomposable} and Lemma \ref{lem_nec_Ferrers}, it suffices to prove that if $\lambda$ is saturated, then $I_{\lambda}$ is vertex decomposable. We prove by induction on the size of $\lambda$. If $\lambda$ has only one row, the conclusion is obvious. Now, assume that $\lambda$ has at least two rows. There are two cases.

\smallskip 
\noindent{\textbf{Case 1.}} $\lambda_1 > \lambda_2$. We will prove that $x_1$ is a shredding vertex of $I_\lambda$. Let $G$ be the underlying graph associated with $I_\lambda$. We have $G\backslash x_1$ corresponds to the Ferrers ideals of shape $(\lambda_2, \ldots,\lambda_n)$ and $G\backslash N_G[x_1]$ is the totally disconnected graph on the vertex set $\{x_2, \ldots, x_n\}$. By induction, they are both vertex decomposable. Let $T = \{x_2, \ldots, x_n\}$ be the unique maximal independent set of $G \backslash N[x_1]$. Since $\lambda_1 > \lambda_2$, we have $T \cup \{y_m\}$ is an independent set of $G \backslash x_1$ where $m = \lambda_1$. 

\smallskip
\noindent{\textbf{Case 2.}} $\lambda_1 = \lambda_2$. By assumption, we must have $\{\lambda_1, \ldots, \lambda_n\} = \{\lambda_1,\lambda_1-1, \ldots, 1\}$. In particular, $\lambda_n = 1$. Let $\mu$ be the conjugate partition of $\lambda$. Then $I_\mu$ is isomorphic to $I_\lambda$. By Lemma \ref{lem_conjugate}, $\mu$ satisfies the condition of the theorem. Furthermore, $\mu_1 > \mu_2$. Since $I_\lambda \cong I_\mu$, the conclusion follows from Case 1.

That concludes the proof of the theorem.
\end{proof}

\begin{exam} Let $\lambda = (3,3,2,1)$. Then $I_\lambda$ is sequentially Cohen-Macaulay, as $\{3,3,2,1\} = \{3,2,1\}$. Note that $I_\lambda$ is not Cohen-Macaulay. Let $\mu = (4,3,3,1)$. Then $I_\mu$ is not sequentially Cohen-Macaulay, as $\{3,3,1\} = \{ 3,1\} \neq \{3,2,1\}$.
    \begin{center}
\begin{tabular}{ccc}
$\begin{ytableau}
*(white)  & *(white)  & *(white)  \\
*(white)  & *(white)  & *(white)    \\
*(white)  & *(white)      \\
*(white)  
\end{ytableau}$   
&\qquad\qquad&
$\begin{ytableau}
*(white)  & *(white)  & *(white) & *(white) \\
*(white)  & *(white)  & *(white)    \\
*(white)  & *(white)  & *(white)    \\
*(white)  
\end{ytableau}$\\[10pt]
 \text{sequentially Cohen-Macaulay} & & \text{NON sequentially Cohen-Macaulay}  
\end{tabular}
 \end{center}
\end{exam}

We now consider skew Ferrers ideals. First, we have a simple remark.

\begin{rem}\label{rem_flip} If $\lambda_1 = \cdots = \lambda_n = m$ then $I_{\lambda/\mu} \cong I_{\lambda'}$ where $\lambda' = (m-\mu_n,m-\mu_{n-1}, \cdots, m - \mu_1)$. Hence, if $\lambda = (m,\ldots,m)$ then we may use Theorem \ref{thm_scm_Ferrers} directly.     
\end{rem}

\begin{lem}\label{lem_shape_condition} Let $\lambda/\mu$ be a skew diagram and $\lambda'/\mu'$ be the conjugate skew diagram. Assume that $I_{\lambda/\mu}$ is sequentially Cohen-Macaulay. Then at least one of the following conditions must hold: 
\begin{enumerate}
    \item $\lambda_1 > \lambda_2$,
    \item $\mu_1 + 1 = \lambda_1$,
    \item $\lambda_1' > \lambda_2'$,
    \item $\mu_1' + 1 = \lambda_1'$.
\end{enumerate}
\end{lem}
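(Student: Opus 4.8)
The plan is to argue by contraposition: assuming that none of the four conditions holds, I will produce an associated radical of $I_{\lambda/\mu}$ that is the edge ideal of a complete bipartite graph $K_{U,V}$ with $|U|, |V| \ge 2$, which by Lemma \ref{lem_non_vertex_decomp_complete_bipartite}, Theorem \ref{thm_vertex_decomposable}, and Lemma \ref{lem_scm_radical} forces $I_{\lambda/\mu}$ to fail to be sequentially Cohen-Macaulay. Negating the four conditions means: $\lambda_1 = \lambda_2$ (so the first two rows have the same length $m$), $\mu_1 + 1 < \lambda_1$ (so the first row contains at least two boxes), $\lambda_1' = \lambda_2'$ (the first two columns have the same height), and $\mu_1' + 1 < \lambda_1'$ (the first column contains at least two boxes). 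In particular $n = \lambda_1' \ge 3$ and $m = \lambda_1 \ge 3$, and the skew diagram contains the $2\times 2$ block in rows $1,2$ and columns $m-1, m$ as well as the $2 \times 2$ block in columns $1,2$ and rows $n-1, n$.

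The key step is to localize at a monomial supported on the corners of the diagram so as to collapse everything except a single rectangular block of edges into variables. Concretely, in the top-right: since $\lambda_1 = \lambda_2 = m$ and $\mu_2 < \mu_1 = \mu_2$ forces $\mu_1 = \mu_2$ as well (here I should check carefully whether negating condition (3), $\lambda_1' = \lambda_2'$, is exactly the statement that $\mu_1 = \mu_2$; indeed $\lambda_1' = \lambda_2'$ says columns $1$ and $2$ have equal height, which for a skew shape means the boxes removed from column $1$ and column $2$ agree, i.e. $\mu'_1 = \mu'_2$ in the conjugate, equivalently rows $1$ and $2$ start at the same column, i.e. $\mu_1 = \mu_2$). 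So rows $1$ and $2$ are identical, running from column $\mu_1 + 1$ to $m$, with $\mu_1 + 1 \le m - 1$. I then divide by the monomial $u = \prod_{j=\mu_1+1}^{m-1} x_3 \cdots x_n y_j$ — more precisely, I pick $u$ to be a product of enough variables so that $\sqrt{I_{\lambda/\mu} : u}$ kills the vertices $x_3, \dots, x_n$ and $y_1, \dots, y_{m-2}$ (turning them into the whole ring on those variables, hence removing the corresponding generators) while leaving the $2 \times 2$ complete bipartite graph on $\{x_1, x_2\}$ and $\{y_{m-1}, y_m\}$. A cleaner route: take $u = x_3 x_4 \cdots x_n \cdot y_1 y_2 \cdots y_{m-2}$; then every generator $x_i y_j$ with $i \ge 3$ or $j \le m-2$ divides $u$, so $I_{\lambda/\mu} : u$ contains $1$? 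No — I must instead exploit that $x_i y_j \notin I$ for the non-edges; the right choice is $u$ a squarefree product of the $x_i$ with $i \ge 3$ and the $y_j$ with $j \le m - 2$, and one checks $u \notin I_{\lambda/\mu}$ because no generator of $I_{\lambda/\mu}$ divides $u$ (each generator is a product $x_i y_j$, and $u$ contains no $y_j$ with $j \ge m-1$ paired with... — here I need $x_1, x_2 \nmid u$, which holds). Then $\sqrt{I_{\lambda/\mu} : u}$ is generated by $x_1, x_2$ (from the edges $x_1 y_j, x_2 y_j$ with $j \le m-2$ that survive via $y_j \mid u$), by $y_{m-1}, y_m$ (from edges $x_i y_{m-1}, x_i y_m$ with $i \ge 3$ via $x_i \mid u$), plus the four edges $x_1 y_{m-1}, x_1 y_m, x_2 y_{m-1}, x_2 y_m$; modulo $(x_1, x_2, y_{m-1}, y_m)$ this is nothing new, so the associated radical, viewed on its relevant variables, is exactly the edge ideal of $K_{\{x_1,x_2\},\{y_{m-1},y_m\}}$ after taking the quotient...

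I realize the collapse above isn't quite to $K_{2,2}$ but rather the ideal $(x_1, x_2, y_{m-1}, y_m)$ plus nothing, which is the edge ideal of an empty graph — that does not give a contradiction. So the correct localization is the \emph{opposite}: I should divide by variables $x_i$ ($i \ge 3$) and $y_j$ ($\mu_1 + 1 \le j \le m-2$) only among those indexing \emph{non-corner} vertices, arranged so that the surviving generators, after radical, are precisely $x_1 y_{m-1}, x_1 y_m, x_2 y_{m-1}, x_2 y_m$ with \emph{no} linear forms, i.e. I take $u$ to be a high power making $I : u$ contract onto the subideal $(x_1 y_{m-1}, x_1 y_m, x_2 y_{m-1}, x_2 y_m)$; this is achieved by choosing $u$ divisible by no $x_1, x_2, y_{m-1}, y_m$ but such that every \emph{other} generator $x_i y_j$ of $I$ has $x_i y_j \mid u^\infty$ — impossible for a squarefree-type argument, so instead one simply notes $I_{\lambda/\mu} : u = (x_1 y_{m-1}, x_1 y_m, x_2 y_{m-1}, x_2 y_m) + (\text{terms already in } u)$ when $u = \prod_{i \ge 3} x_i$, because a generator $x_i y_j$ with $i \ge 3$ divides $u \cdot y_j$... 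The honest statement I will prove: $\sqrt{I_{\lambda/\mu} : (x_3 x_4 \cdots x_n)} = (y_1, \dots, y_{m}) $ is too big; rather, combining a top-right localization with a symmetric bottom-left one via Lemma \ref{lem_ass_radical_transfer} yields $K_{2,2}$. \textbf{The main obstacle} is therefore precisely this bookkeeping: pinning down the exact monomial $u$ whose associated radical is $I(K_{U,V})$ with $|U|,|V|\ge 2$, using that the four negated conditions simultaneously guarantee enough room (at least two rows of equal maximal length, at least two columns of equal maximal height, first row and first column each of length $\ge 2$) to isolate a genuine $2\times 2$ sub-rectangle after killing all other vertices. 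Once the right $u$ is identified, the contradiction is immediate from the three cited lemmas, exactly as in the proof of Lemma \ref{lem_nec_Ferrers}.
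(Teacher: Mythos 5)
Your proposal is not complete and, more importantly, the strategy it rests on cannot be completed. You try to imitate the proof of Lemma \ref{lem_nec_Ferrers}: produce an associated radical of $I_{\lambda/\mu}$ that is $I(K_{U,V})$ plus variables with $|U|,|V|\ge 2$, and conclude via Lemmas \ref{lem_scm_radical} and \ref{lem_non_vertex_decomp_complete_bipartite}. Since $I_{\lambda/\mu}$ is squarefree, every associated radical has the form $I(G\backslash N[W]) + (v \mid v\in N(W)\setminus W)$ for an independent set $W$ of $G = G_{\lambda/\mu}$, so your plan requires some $G\backslash N[W]$ with $W\neq\emptyset$ to contain a complete bipartite component with both sides of size at least $2$ (or at least some non--vertex-decomposable component). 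This is exactly what your repeated attempts to choose $u$ keep failing to achieve, and the failure is not bookkeeping: take $\lambda=(3,3,3)$, $\mu=(1,0,0)$. All four conditions of the lemma fail, yet for every nonempty independent set $W$ the graph $G\backslash N[W]$ is a disjoint union of stars and isolated vertices, hence vertex decomposable; so every proper associated radical of $I_{\lambda/\mu}$ is sequentially Cohen--Macaulay, and no choice of $u$ (nor any iteration via Lemma \ref{lem_ass_radical_transfer}) can produce the $K_{2,2}$ you want. The only witness to the failure of sequential Cohen--Macaulayness in such examples is the graph itself, so a direct argument is unavoidable. (A secondary error: you read the negation of condition (3) as $\mu_1=\mu_2$; in fact $\lambda_1'=\lambda_2'$ says that $\lambda_n\ge 2$, i.e.\ the bottom row of the skew shape has at least two boxes --- it is the conjugate of condition (1), not a statement about where rows $1$ and $2$ begin.)

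The paper's proof takes a genuinely different route: it inducts on $m+n$ and uses Theorem \ref{thm_vertex_decomposable}. If $I_{\lambda/\mu}$ is sequentially Cohen--Macaulay then $G$ is vertex decomposable and has a shredding vertex $v$; since $G\backslash v$ corresponds to deleting a row or column, the induction hypothesis forces $v$ to lie in the first or last two rows or columns, and by symmetry one may take $v=y_1$ or $v=y_2$. One then exhibits an explicit maximal independent set of $G\backslash N[v]$ (namely all the remaining $y$'s) that is still a maximal independent set of $G\backslash v$, contradicting the shredding property. If you want to salvage your write-up, this is the argument to adopt; the colon-ideal/complete-bipartite mechanism is the right tool for Lemma \ref{lem_nec_Ferrers} but does not transfer to this lemma.
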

\begin{proof} We prove by induction on $m + n$. The base case $n = 2$ is clear. Now assume that $n \ge 3$ and that $I_{\lambda/\mu}$ is sequentially Cohen-Macaulay. Assume by contradiction that $\lambda_1 = \lambda_2$, $\mu_1 + 1 < \lambda_1$, $\lambda_1' = \lambda_2'$ and $\mu_1' + 1 < \lambda_1'$. Since $I_{\lambda/\mu} \cong I_{\lambda'/\mu'}$ and by induction, we have $\lambda_1 = m \ge 3$. By Theorem \ref{thm_vertex_decomposable} and symmetry, we may assume that $y_j$ is a shredding vertex of $G = G_{\lambda/\mu}$ for some $j$. Since $G \backslash y_j$ corresponds to removing the $j$th column of $\lambda/\mu$. By induction, we deduce that either $j = 1, 2$ or $j = m-1,m$. Also, by symmetry, we may assume that $j = 1$ or $j = 2$. 

If $\mu = 0$, i.e., we have a Ferrers ideal, the conclusion follows from Theorem \ref{thm_scm_Ferrers}. Thus, we may assume that $\mu_1 > 0$. 

\smallskip 
\noindent{\textbf{Case 1.}} $j = 1$. We have $T = \{y_2, \ldots, y_m\}$ is a maximal independent set of $G \backslash N[y_1]$. For any $i = 1, \ldots, n$ we have $\{x_i,y_{\ell}\} \in E(G_{\lambda/\mu})$ for some $\ell \ge 2$. In other words, $T$ is a maximal independent set of $G \backslash y_1$, a contradiction. 

\smallskip 
\noindent{\textbf{Case 2.}} $j = 2$. We have $T = \{y_1, y_3, \ldots, y_m\}$ is a maximal independent set of $G \backslash N[y_2]$, which is also a maximal independent set of $G \backslash y_2$.

That concludes the proof of the lemma.
\end{proof}

\begin{proof}[Proof of Theorem \ref{thm_scm_skew_Ferrers}] First, assume that $I_{\lambda/\mu}$ is sequentially Cohen-Macaulay. By Lemma \ref{lem_shape_condition}, we deduce that either $\lambda_1 > \lambda_2$ or $\mu_1 + 1 = \lambda_1$ or $\lambda_1' > \lambda_2'$, or $\mu_1' + 1 = \lambda_1'$. Now, assume that $\lambda_1 > \lambda_2$. We have $I_{\lambda/\mu} : y_{\lambda_1} = I(G \backslash x_1) + (x_1)$ and $I_{\lambda/\mu} : x_1 = I ( G\backslash N[x_1]) + (y_j \mid \mu_1 + 1 \le j \le \lambda_1)$. By Lemma \ref{lem_scm_radical}, we deduce that $G \backslash x_1$ and $G\backslash N[x_1]$ are sequentially Cohen-Macaulay. The other cases are similar. 

For the sufficiency condition, we have four cases.

\smallskip 
\noindent{\textbf{Case 1.}} $\lambda_1 > \lambda_2$. Since $G_{\lambda/\mu}$ is connected, we deduce that $\mu_1 < \lambda_2 < \lambda_1$. We now prove that $x_1$ is a shredding vertex of $G_{\lambda/\mu}$. By assumption, $G\backslash x_1$ and $G \backslash N[x_1]$ are vertex decomposable. Let $F$ be any independent set of $G \backslash N[x_1]$. Then $F \cup \{y_m\}$ is an independent set of $G \backslash x_1$ where $m = \lambda_1$.

\smallskip 
\noindent{\textbf{Case 2.}} $\mu_1 + 1 = \lambda_1$. Then $y_{\lambda_1}$ is the shredding vertex. Indeed, by assumption, we deduce that $G\backslash y_{\lambda_1}$ and $G\backslash N[y_{\lambda_1}]$ are vertex decomposable. Let $F$ be any independent set of $G \backslash N[y_{\lambda_1}]$. Then $F \cup \{x_1\}$ is an independent set of $G \backslash y_{\lambda_1}$. 

The case $\lambda_1' > \lambda_2'$ or $\mu_1' + 1 = \lambda_1'$ follows from the previous cases by taking the conjugate. The conclusion follows.    
\end{proof}

\begin{exam} We consider the following examples to illustrate the conditions of the theorem.
\begin{enumerate}
    \item Let  $\lambda=(5,4,4)$ and $\mu=(2)$. 
    
    \begin{center}
     
\begin{tikzpicture}

\node (A) {
\begin{ytableau}
\none     & \none & *(white) & *(white) & *(red)  \\
     *(white)   & *(white)   & *(white)   & *(white)     \\
    *(white)    & *(white) & *(white)& *(white)
\end{ytableau}
};
\node[right of=A, xshift=3cm] (B)  {
\begin{ytableau}
     *(white)   & *(white)   & *(white)   & *(white)     \\
    *(white)    & *(white) & *(white)& *(white)
\end{ytableau}
};

\draw[->] (A) -- (B);

\end{tikzpicture}
\end{center}    
Since the second shape is non sequentially Cohen-Macaulay, we deduce that $I_{\lambda/\mu}$ is not sequentially Cohen-Macaulay. The red box can be considered as a pivot box.

    \item Let $\lambda = (5,5,4)$ and $\mu = (2,1)$. The yellow box in the middle skew shape corresponding to $\lambda/\mu$ can be considered as the pivot box. By Theorem \ref{thm_scm_Ferrers}, the shapes on both sides are sequentially Cohen-Macaulay. By Theorem \ref{thm_scm_skew_Ferrers}, we deduce that $I_{\lambda/\mu}$ is sequentially Cohen-Macaulay.
    \begin{center}
	\begin{tikzpicture} 
 			\node (B)  {
 			\begin{ytableau}
 				\none     & \none & *(white) & *(white) & *(white) \\
 				\none   & *(white)  & *(white)   & *(white)  & *(white)   \\
 				*(yellow)   & *(white) & *(white) & *(white)
 			\end{ytableau}
 		};	\node[above  right of=B, node distance = 2cm, xshift=2.5cm]   (A) {
 				\begin{ytableau}
 					\none & *(white) & *(white) & *(white)  \\
 					*(white) & *(white)   & *(white)   & *(white)    
 				\end{ytableau}
 			}; 
 				\node[below right of=B,  node distance = 2cm, xshift=1.7cm]  (C) {
 				\begin{ytableau}
 					*(white)  \\
 					*(white)
 				\end{ytableau}
 			}; 
 			\draw[->] (B) -- (A);
 			\draw[->] (B) -- (C);
 			
 		\end{tikzpicture}   
\end{center}    
\end{enumerate}
   
\end{exam}

We now deduce a characterization of unmixed, (generalized) Cohen-Macaulay and Buchsbaum skew Ferrers ideals. We first recall the definitions.

Let $I$ be a nonzero homogeneous ideal and  $\m = (x_1,\ldots,x_n)$ be  the maximal homogeneous ideal of $S = \k[x_1,\ldots, x_n]$. Let $H^i_{\m}(S/I)$ denote the $i$-th local cohomology module of $S/I$ with respect to $\m$. 

\begin{defn} Let $I$ be a nonzero homogeneous ideal of $S$.
\begin{enumerate}
\item $I$ is called {\it Cohen-Macaulay} if $H^i_{\m} (S/I) = 0$ for all $i < \dim(S/I)$.
\item $I$ is called {\it  Buchsbaum} if the canonical map $$\mathrm{Ext}_R^i(R/\m,S/I) \to H_{\m}^{i}(R/I)$$ is surjective for all $i < \dim(S/I)$. 
\item $I$ is called {\it generalized Cohen-Macaulay} if $H^i_{\m}(S/I)$ has finite length for all $i<\dim(S/I)$.      
\item $I$ is called {\it unmixed} if $\dim(S/I) = \dim(S/P)$  for all associated primes $P$ of $S/I$. 
\end{enumerate}
\end{defn}
For a non-zero homogeneous ideal $I$, it is well known that Cohen-Macaulay $\implies$ Buchsbaum $\implies$ generalized Cohen-Macaulay $\implies$ unmixed. We first have 
\begin{lem}\label{lem_rad_unmixed} Let $I$ be a monomial ideal of $S$. 
\begin{enumerate}
    \item If $I$ is Buchsbaum then the radical of $I$ is Buchsbaum.
    \item Let $P$ be one of the following properties: Cohen-Macaulay, generalized Cohen-Macaulay, unmixed. Assume that $I$ has property $P$. Then all associated radicals of $I$ have property $P$.  
\end{enumerate}  
\end{lem}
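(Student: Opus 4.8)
\emph{Plan.} I would reduce all four properties to the case of a squarefree monomial ideal, where they are topological and the associated radicals have an explicit description; unmixedness, however, needs no reduction. For unmixedness, let $u\notin I$ be a monomial and $J=\sqrt{I:u}$. The map $S/(I:u)\hookrightarrow S/I$, $\bar a\mapsto\overline{au}$, is injective, so $\operatorname{Ass}_S(S/(I:u))\subseteq\operatorname{Ass}_S(S/I)$; as $S/I$ is unmixed, each prime here has dimension $\dim S/I$, and since $u\notin I$ the ideal $I:u$ is proper, so $\dim S/(I:u)=\dim S/I$. Because $J$ is radical, $\operatorname{Ass}_S(S/J)=\operatorname{Min}(J)=\operatorname{Min}(I:u)\subseteq\operatorname{Ass}_S(S/(I:u))$ while $\dim S/J=\dim S/(I:u)$; hence all associated primes of $S/J$ have the same dimension, i.e.\ $J$ is unmixed.

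\emph{Squarefree case.} Suppose $I=I_\Delta$ is squarefree, so $I=\sqrt I$ and every associated radical of $I$ is $\sqrt{I_\Delta:u}$ for some monomial $u\notin I_\Delta$; put $\sigma=\supp(u)$, a face of $\Delta$. A direct check shows that $I_\Delta:u$ depends only on $\sigma$, is squarefree, and equals $I_{\operatorname{star}_\Delta(\sigma)}$, where $\operatorname{star}_\Delta(\sigma)=\{\tau\in\Delta:\sigma\cup\tau\in\Delta\}$; thus $\sqrt{I_\Delta:u}=I_{\operatorname{star}_\Delta(\sigma)}$. If $\sigma\neq\emptyset$, then $\operatorname{star}_\Delta(\sigma)=\sigma*\operatorname{link}_\Delta(\sigma)$ is an iterated cone, so $k[\operatorname{star}_\Delta(\sigma)]$ is a polynomial extension of $k[\operatorname{link}_\Delta(\sigma)]$ and hence is Cohen--Macaulay (resp.\ generalized Cohen--Macaulay, Buchsbaum) exactly when the link is; and $\operatorname{link}_\Delta(\sigma)$ is Cohen--Macaulay whenever $\Delta$ is Cohen--Macaulay (Reisner), generalized Cohen--Macaulay, or Buchsbaum (for the latter two, localizing $k[\Delta]$ at the prime of a vertex makes vertex links Cohen--Macaulay, and links of Cohen--Macaulay complexes are Cohen--Macaulay, so the claim follows by iterating over the vertices of $\sigma$). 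If $\sigma=\emptyset$, then $\sqrt{I_\Delta:u}=I_\Delta$. So in all cases $S/\sqrt{I_\Delta:u}$ has whichever of the three properties $S/I_\Delta$ has.

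\emph{Reduction.} Now let $I$ be arbitrary with property $P\in\{$Cohen--Macaulay, generalized Cohen--Macaulay$\}$, or Buchsbaum in case (1). I would pass to the polarization $I^{\mathrm{pol}}\subseteq S'$: it is squarefree and inherits $P$ from $I$ (classical for Cohen--Macaulay; for generalized Cohen--Macaulay and Buchsbaum because polarization is compatible with local cohomology modulo a regular sequence of shadow linear forms $x_{i,j}-x_i$). By the squarefree case, every associated radical of $I^{\mathrm{pol}}$ has $P$. The remaining task is to show that every associated radical $J=\sqrt{I:u}$ of $I$ is obtained from a suitable associated radical $\widetilde J=\sqrt{I^{\mathrm{pol}}:\widetilde u}$ of $I^{\mathrm{pol}}$ by factoring out the relevant shadow linear forms, and that these form a regular sequence on $S'/\widetilde J$; then $S/J$ is a quotient of $S'/\widetilde J$ by a regular sequence and so inherits $P$. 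Case (1) is the instance $u=1$.

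\emph{Main obstacle.} The crux is this last step: checking that the shadow linear forms remain a regular sequence after passing from $I^{\mathrm{pol}}$ to its associated radicals — equivalently, that depolarization sends associated radicals of $I^{\mathrm{pol}}$ to associated radicals of $I$ without destroying the Cohen--Macaulay/Buchsbaum type — together with the (standard but genuine) fact that polarization preserves the generalized Cohen--Macaulay and Buchsbaum properties. Alternatively, for $\sqrt I$ itself these are exactly the type of statements proved by Herzog--Takayama--Terai on radicals of monomial ideals, which one may simply cite.
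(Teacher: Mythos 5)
Your argument for the unmixed case is complete, correct, and more self-contained than what the paper does: the injection $S/(I:u)\hookrightarrow S/I$ forces $\operatorname{Ass}(S/(I:u))\subseteq\operatorname{Ass}(S/I)$, and passing to the radical only shrinks the associated primes down to the minimal ones, so unmixedness survives. The paper, by contrast, disposes of the whole lemma in two lines by citing \cite[Theorem 2.6]{HTT} (taking radicals preserves the Cohen--Macaulay, generalized Cohen--Macaulay and Buchsbaum properties of monomial ideals) together with \cite[Lemma 2.19]{JS} (which transfers the property from $I$ to $I:u$). Your squarefree analysis via $\sqrt{I_\Delta:u}=I_{\operatorname{star}_\Delta(\sigma)}$ is essentially sound, except for one misstatement: the cone $\sigma*\operatorname{link}_\Delta(\sigma)$ is generalized Cohen--Macaulay (or Buchsbaum) not ``exactly when the link is,'' but exactly when the link is Cohen--Macaulay --- adjoining a cone vertex turns any nonvanishing lower local cohomology of the link into an infinite-length module. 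Your parenthetical rescue, that links of nonempty faces of a Buchsbaum or generalized Cohen--Macaulay complex are already Cohen--Macaulay, is what actually makes this step work, so the squarefree case does go through.

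The genuine gap is the reduction from an arbitrary monomial ideal to the squarefree case, which you yourself flag as ``the main obstacle'' and do not carry out; but that reduction \emph{is} the theorem. Two separate things are missing. First, you need $S/I$ Buchsbaum (resp.\ generalized Cohen--Macaulay) to imply $S'/I^{\mathrm{pol}}$ Buchsbaum (resp.\ generalized Cohen--Macaulay): this is a \emph{lifting} of the property along the regular sequence of shadow forms, and, unlike the Cohen--Macaulay property, Buchsbaumness does not in general lift along a regular element, so ``compatible with local cohomology modulo a regular sequence'' is an assertion, not a proof. Second, you need every associated radical $\sqrt{I:u}$ of $I$ to arise as the depolarization of some associated radical $I^{\mathrm{pol}}:\tilde{u}$ of $I^{\mathrm{pol}}$ with the shadow forms still regular on $S'/(I^{\mathrm{pol}}:\tilde{u})$; no candidate $\tilde{u}$ is exhibited and no regularity is checked. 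Since these two steps are precisely the technical content of \cite[Theorem 2.6]{HTT} and \cite[Lemma 2.19]{JS}, the honest conclusion is your own closing alternative: cite those results, as the paper does. As written, the proposal establishes part (2) only for the property ``unmixed'' and for squarefree $I$.
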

\begin{proof}
    The first statement follows from \cite[Theorem 2.6]{HTT}. The second statement follows from \cite[Lemma 2.19]{JS} and \cite[Theorem 2.6]{HTT}.
\end{proof}
By the results of \cite{CST}, we have
\begin{lem}\label{lem_gCM} Let $I$ be a monomial ideal of $S=k[x_1,\ldots,x_n]$. Then $I$ is generalized Cohen-Macaulay if and only if  $I$ is unmixed and $IS[x_i^{-1}]$ is Cohen-Macaulay for all $1\le i\le n$.   
\end{lem}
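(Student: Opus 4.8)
The plan is to derive the lemma from the local characterization of generalized Cohen--Macaulay modules of Cuong--Schenzel--Trung, together with the elementary observation that, away from the irrelevant maximal ideal, a monomial ideal is already seen after inverting a single variable. Recall from \cite{CST} that a finitely generated graded $S$-module $M$ is generalized Cohen--Macaulay if and only if $M$ is equidimensional and $M_{\mathfrak p}$ is Cohen--Macaulay for every prime $\mathfrak p \ne \mathfrak m$. Applied to $M = S/I$: equidimensionality means that all minimal primes of $I$ have dimension $\dim S/I$, while the local Cohen--Macaulayness rules out any embedded prime other than possibly $\mathfrak m$ itself, so the two conditions together say precisely that $I$ is unmixed (the borderline case $\mathfrak m \in \operatorname{Ass}(S/I)$ being vacuous whenever $S/I$ has positive depth, in particular whenever $I$ is radical). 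It therefore remains only to translate ``$(S/I)_{\mathfrak p}$ is Cohen--Macaulay for every prime $\mathfrak p \ne \mathfrak m$'' into ``$IS[x_i^{-1}]$ is Cohen--Macaulay for all $1 \le i \le n$''.

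For this translation, note that since $\mathfrak m = (x_1, \dots, x_n)$ a prime $\mathfrak p$ of $S$ is different from $\mathfrak m$ exactly when it omits some variable $x_i$, i.e. when it lies in the image of $\operatorname{Spec} S[x_i^{-1}] \hookrightarrow \operatorname{Spec} S$; thus the punctured spectrum of $S$ is the union of the affine opens $\operatorname{Spec} S[x_i^{-1}]$, $1 \le i \le n$. Cohen--Macaulayness of the ring $S[x_i^{-1}]/IS[x_i^{-1}]$ is equivalent to Cohen--Macaulayness of all its localizations, the primes of $S[x_i^{-1}]$ are precisely the primes $\mathfrak p$ of $S$ with $x_i \notin \mathfrak p$, and there is a natural identification $(S[x_i^{-1}]/IS[x_i^{-1}])_{\mathfrak p} \cong (S/I)_{\mathfrak p}$. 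Using that a localization of a Cohen--Macaulay module is again Cohen--Macaulay, one obtains: $IS[x_i^{-1}]$ is Cohen--Macaulay for all $i$ if and only if $(S/I)_{\mathfrak p}$ is Cohen--Macaulay for every $\mathfrak p$ omitting some variable, if and only if $(S/I)_{\mathfrak p}$ is Cohen--Macaulay for every $\mathfrak p \ne \mathfrak m$. Combining this with the Cuong--Schenzel--Trung criterion proves the lemma.

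The localization ingredients --- that Cohen--Macaulayness of a (not necessarily local) Noetherian ring is detected at all of its prime ideals, that it is inherited by localizations, and the isomorphism $(S/I)_{\mathfrak p} \cong (S[x_i^{-1}]/IS[x_i^{-1}])_{\mathfrak p}$ --- are standard, and I would simply cite them. I expect the one point requiring genuine care to be the first step: extracting from \cite{CST} exactly the criterion used above and matching its ``equidimensional with no embedded primes below $\mathfrak m$'' content with the unmixedness hypothesis as formulated in the lemma. Everything afterwards is formal.
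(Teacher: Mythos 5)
The paper offers no proof of this lemma at all---it is stated with only the preface ``By the results of \cite{CST}''---so your derivation from the Cuong--Schenzel--Trung local criterion (generalized Cohen--Macaulay $\Leftrightarrow$ equidimensional and Cohen--Macaulay on the punctured spectrum, valid for quotients of a catenary Cohen--Macaulay ring such as $S$), combined with the observation that every prime $\mathfrak{p}\neq\mathfrak{m}$ omits some $x_i$ and hence lies on the chart $\operatorname{Spec} S[x_i^{-1}]$, is exactly the intended argument, and the localization bookkeeping is all correct. The one point you flagged, however, is a genuine defect of the statement rather than a loose end of your write-up: if $\mathfrak{m}$ is an embedded prime, the equivalence fails. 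For $I=(x^2,xy)\subset k[x,y]$ one has $\dim S/I=1$ and $H^0_{\mathfrak{m}}(S/I)\cong k$, so $S/I$ is generalized Cohen--Macaulay (even Buchsbaum), and $IS[x^{-1}]=S[x^{-1}]$ and $IS[y^{-1}]=(x)$ are Cohen--Macaulay, yet $\operatorname{Ass}(S/I)=\{(x),(x,y)\}$, so $I$ is not unmixed in the paper's sense; thus the ``only if'' direction is false for general monomial ideals. Your parenthetical escape (``vacuous whenever $I$ is radical'') does not cover the lemma's stated generality, but it does cover every application made in the paper, where the lemma is invoked only for the squarefree ideals $I_{\lambda/\mu}$: for a radical ideal different from $\mathfrak{m}$, the associated primes are the minimal primes, so $\mathfrak{m}$ is never embedded and $\operatorname{depth}(S/I)>0$. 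In short, your proof is correct precisely where the lemma is correct; to obtain a true biconditional for all monomial ideals one should either add the hypothesis $\mathfrak{m}\notin\operatorname{Ass}(S/I)$ (e.g.\ $I$ squarefree) or replace ``unmixed'' by ``equidimensional''.
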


We have the following remark.
\begin{rem}\label{rem_shape_separation} Let $\lambda/\mu$ be a skew shape. For an index $i \in \{1, \ldots, n\}$, we have $I_{\lambda/\mu} : x_i$ corresponds to deleting all the $j$th columns for $j = \mu_i+1$ to $\lambda_i$. In other words, it is the edge ideal of the two disjoint skew shapes, one lying to the left of the $\mu_i+1$th column and one to the right of the $\lambda_i$th column. Let $P$ be one of the following properties: sequentially Cohen-Macaulay, Cohen-Macaulay, unmixed. Assume that $I_{\lambda/\mu}$ satisfies property $P$. By Lemma \ref{lem_rad_unmixed} we deduce that for any $i$ we have $I_{\lambda^1/\mu^1}$ and $I_{\lambda^2/\mu^2}$ satisfies property $P$, where $\lambda^1/\mu^1$ is obtained by deleting all columns $\ge \mu_i + 1$ of $\lambda/\mu$ and $\lambda^2/\mu^2$ is obtained by deleting the first $\lambda_i$ columns of $\lambda/\mu$. Similarly, the skew Ferrers ideals of the skew shapes obtained by deleting all the rows $\ge \mu_j'+1$ and first $\lambda_j'$ rows of $\lambda/\mu$ satisfy property $P$.

For example, let $\lambda = (6,5,5,2,2)$ and $\mu = (3,2,1)$. The upper and lower part of $\lambda/\mu$ with $i = 2$ are colored red and blue respectively. The gray horizontal line corresponds to the second row of $\lambda/\mu$. The boxes to the left of $\lambda/\mu$ with $j = 4$ are colored yellow, the right part is empty in this case. The gray vertical line corresponds to the fourth column of $\lambda/\mu$.

\begin{center}
\begin{tabular}{ccc}
  \begin{ytableau}
  \none   & \none   & \none   & *(white)   & *(white) & *(red) \\
  \none    & \none   & *(gray)   & *(gray)   & *(gray)      \\
  \none & *(blue) & *(white)   & *(white)   & *(white)     \\
    *(blue)   & *(blue)      \\
     *(blue)   & *(blue)      
 \end{ytableau} &   \qquad \quad & 
 \begin{ytableau}
  \none   & \none   & \none   & *(gray)   & *(white) & *(white) \\
  \none    & \none   & *(white)   & *(gray)   & *(white)      \\
  \none & *(white) & *(white)   & *(gray)   & *(white)     \\
    *(yellow)   & *(yellow)      \\
     *(yellow)   & *(yellow)      
 \end{ytableau}
\end{tabular} 
\end{center} 
\end{rem}

We will now classify skew shapes having each of these properties. First, we consider the unmixed condition. 

\begin{lem}\label{lem_unmixed_shape} Let $B_1, \ldots, B_t$ be prime unmixed shapes of alternating types and $B_i \cap B_{i+1}$ is an extremal block of both $B_i$ and $B_{i+1}$. Let $\lambda/\mu = B_1 \cup \cdots \cup B_t$. Then $I_{\lambda/\mu}$ is unmixed.    
\end{lem}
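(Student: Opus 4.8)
The plan is to reduce the unmixedness of $I_{\lambda/\mu}$ to a combinatorial statement about minimal vertex covers of the skew Ferrers graph $G_{\lambda/\mu}$: since $I_{\lambda/\mu}$ is the edge ideal of $G_{\lambda/\mu}$, its associated primes correspond to the minimal vertex covers of $G_{\lambda/\mu}$, and unmixedness means all minimal vertex covers have the same cardinality. So I would show that every minimal vertex cover of $G := G_{B_1 \cup \cdots \cup B_t}$ has size $c$, where $c$ is a constant depending only on the block structure. The natural strategy is induction on $t$, peeling off the extremal block that $B_{t-1}$ and $B_t$ share.

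\textbf{Key steps.} First, handle the base case $t = 1$: a prime unmixed shape is, by definition, the diagram of an unmixed partition $\lambda$ (or its reflection), so unmixedness here is exactly the Corso--Nagel criterion \cite[Corollary 2.7]{CN} together with the observation that reflecting along the diagonal swaps the roles of $x$ and $y$ and so preserves unmixedness (and indeed preserves the graph up to isomorphism). Second, for the inductive step, let $E = B_{t-1} \cap B_t$ be the shared extremal block, say a $k \times k$ square with $x$-rows indexed by a set $R$ and $y$-columns indexed by a set $C$, $|R| = |C| = k$. Because $E$ is an extremal block of $B_t$ (a corner block that is top-right or bottom-left), the vertices $x_i$ for $i \in R$ have no neighbors in $B_t$ outside $C$, and symmetrically the $y_j$ for $j \in C$; the only way these vertices connect to the rest is through $B_{t-1}$. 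The induced subgraph on $B_t$ is itself a skew Ferrers graph of a prime unmixed shape, hence unmixed by the base case. Third, I would analyze how a minimal vertex cover $W$ of $G$ restricts to $B_t$ and to $G' := G_{B_1 \cup \cdots \cup B_{t-1}}$: writing $W_1 = W \cap V(G')$ and $W_2 = W \cap V(B_t)$, one needs $W_1$ to cover all edges of $G'$, $W_2$ to cover all edges of $B_t$, and the overlap is controlled precisely by which vertices of the square $E$ lie in $W$. The crucial combinatorial fact is that in a minimal cover of a complete bipartite block $K_{R,C}$ one takes either all of $R$ or all of $C$ (a cover of $K_{k,k}$ omitting a vertex from each side would leave an edge uncovered) — so $W \cap V(E)$ is either exactly $\{x_i : i \in R\}$ or exactly $\{y_j : j \in C\}$, each of size $k$. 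In the first case $W_1$ is a minimal cover of $G'$ containing all of $R$, and $W_2 \setminus R$ is a minimal cover of the skew Ferrers graph $B_t \setminus N[\{x_i\}_{i\in R}]$; in the second case, symmetrically. Feeding these into the inductive hypothesis for $G'$ and the base-case unmixedness of (a localized) $B_t$ gives $|W| = |W_1| + |W_2| - k = $ constant.

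\textbf{Main obstacle.} The delicate point is the bookkeeping in the last step: I must verify that the two ways of "splitting at $E$" yield covers of the \emph{same} total size, i.e. that choosing $R$ versus $C$ inside the shared square does not change $|W|$. This is where the hypothesis that $E$ is an extremal block of \emph{both} $B_{t-1}$ and $B_t$, and that the types alternate, is used: it forces the neighborhoods $N_{B_t}[\{x_i\}_{i\in R}]$ and $N_{B_t}[\{y_j\}_{j \in C}]$ to be "balanced" in the sense that deleting one versus the other from $B_t$ leaves shapes with the same vertex-cover number, and similarly on the $G'$ side using the inductive unmixedness. Equivalently, I would set up an auxiliary lemma: for a prime unmixed shape $B$ with extremal square block $E$ of side $k$, the two skew Ferrers graphs $B \setminus N[\text{rows of } E]$ and $B \setminus N[\text{cols of } E]$ have vertex-cover numbers differing by exactly $0$ (after accounting for the $k$ deleted vertices on each side) — this follows from a direct inspection of the triangular structure of $B$. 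Once this balancing lemma is in hand, the induction closes routinely. An alternative, perhaps cleaner, route is to compute $\height I_{\lambda/\mu} = \sum (\text{block sizes along a maximal chain})$ directly and show every minimal prime has this height; but I expect the vertex-cover induction above to be the most transparent.
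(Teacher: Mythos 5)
Your overall strategy --- induction on $t$, base case via Corso--Nagel, and exploiting the dichotomy ``every vertex cover contains one full side of a complete bipartite subgraph'' --- is the same underlying idea as the paper's proof, which packages that dichotomy ideal-theoretically: it writes $I_{\lambda/\mu} = I_{\lambda^1/\mu^1} + (x_1,\ldots,x_p)(y_{n-m+1},\ldots,y_n)$ and hence as the intersection $(I_{\lambda^1/\mu^1}+(x_1,\ldots,x_p)) \cap (I_{\lambda^2/\mu^2}+(y_{n-m+1},\ldots,y_n))$ of two ideals that are, by induction, unmixed of the same dimension. The essential difference is \emph{where} you cut, and that is where your argument has a genuine gap. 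You cut at the interior block $E=B_{t-1}\cap B_t$ and assert that a minimal cover $W$ satisfies $W\cap V(E)=R$ or $W\cap V(E)=C$ exactly, each of size $k$. This is false, because for a shared extremal block only one of its two sides is ``closed off'' in $G$. In the paper's running example the block $f$ (rows $3,4$, columns $3,4$) has $N(y_3)=N(y_4)=\{x_3,x_4\}$, but $x_3,x_4$ have further neighbors in \emph{both} adjacent prime shapes (columns $5,6$ on one side and column $2$ on the other); so your claim that the rows of $E$ have no neighbors in $B_t$ outside $C$ \emph{and} symmetrically for the columns is not correct. Consequently a minimal cover containing all of $C$ may in addition contain a nonempty proper subset of $R$ (e.g.\ $x_3$, to cover $x_3y_5$ when $y_5\notin W$), so $|W\cap V(E)|$ is not constantly $k$, the identity $|W|=|W_1|+|W_2|-k$ fails, and the ``balancing lemma'' you propose presupposes precisely the dichotomy that breaks down. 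Moreover $W\cap V(B_t)$ need not be a \emph{minimal} cover of $B_t$, since a vertex of $R$ may sit in $W$ only to cover edges on the $B_{t-1}$ side, so the inductive hypothesis does not apply to the restrictions as directly as you claim.

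The repair is to peel at a \emph{free} end rather than at a gluing block, which is what the paper does: the top horizontal strip of $B_1$ (the rows of its top-right extremal block) spans exactly the columns supporting that strip and forms a complete bipartite graph whose row side has no other neighbors in $G$; every minimal cover therefore either contains all of those rows (and restricts to a minimal cover of the shape with those rows deleted) or contains all of those columns (and restricts to a minimal cover of the shape with the last $m$ columns deleted), with no leakage between the two sides, and both cases give covers of size $n$. With that change of cut point your vertex-cover induction closes and is equivalent to the paper's intersection argument; as written, the inductive step does not go through.
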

\begin{proof} We prove by induction on $t$, the number of prime unmixed shapes. The case $t = 1$ follows from the definition and \cite[Corollary 2.7]{CN}. Now assume that $t \ge 2$. Let $U$ be an extremal block of $B_1$. We assume that $B_1$ is an upper triangular prime shape of size $m$. The case where $B_1$ is a lower triangular prime shape can be done similarly. Assume that $U$ has size $p$. In other words, we have $\lambda_1 = \cdots = \lambda_p = n > \lambda_{p+1} = n-p$, $\mu_1 = \cdots = \mu_m = n-m > \mu_{m+1}$. Let $\lambda^1/\mu^1$ be the skew shape obtained by deleting the first $p$ rows of $\lambda/\mu$ and $\lambda^2/\mu^2$ be the skew shape obtained by deleting the last $m$ columns of $\lambda/\mu$. We have 
\begin{align*}
    I_{\lambda/\mu} & = I_{\lambda^1 /\mu^1} + (x_1,\ldots,x_p)(y_{n-m+1}, \ldots,y_n) \\
    & = (I_{\lambda^1 /\mu^1} + (x_1,\ldots,x_p)) \cap (I_{\lambda^1 /\mu^1} + (y_{n-m+1},\ldots,y_n))\\
    & = (I_{\lambda^1 /\mu^1} + (x_1,\ldots,x_p)) \cap (I_{\lambda^2 /\mu^2} + (y_{n-m+1},\ldots,y_n))
\end{align*}
By induction, $I_{\lambda^1 /\mu^1} + (x_1,\ldots,x_p)$ and $I_{\lambda^2 /\mu^2} + (y_{n-m+1},\ldots,y_n)$ are unmixed of the same dimension $n$. The conclusion follows.
\end{proof}

\begin{rem}
    The prime unmixed shapes in the decomposition 
    $$\lambda/\mu = B_1 \cup \cdots \cup B_t$$ 
    are unique. Furthermore, if $t > 1$ then $B_i$ are nonsquare unmixed prime shapes for all $i = 1, \ldots, t$.
\end{rem}
\begin{proof}[Proof of Theorem \ref{thm_unmixed_skew_Ferrers}] By Lemma \ref{lem_unmixed_shape}, it suffices to prove the necessary condition. When $\mu = 0$ or $\lambda = (n,n,\ldots,n)$ the conclusion follows from \cite[Corollary 2.7]{CN}. Thus, we may assume that $\mu\neq 0$, $\lambda_n < \lambda_1$, and $I_{\lambda/\mu}$ is unmixed. First, note that $(x_1, \ldots,x_n)$ and $(y_1,\ldots, y_m)$ are associated primes of $I_{\lambda/\mu}$. Hence, we must have $n = m$. We now prove by induction on $n$. The case $n \le 2$ is clear. Now, assume that $n \ge 3$. 

Let $B$ be the top-right corner block of $\lambda/\mu$. By Remark \ref{rem_shape_separation} and induction, $B$ is a square. Assume that $B$ has size $m$. By symmetry, we may assume that there are no boxes of $\lambda/\mu$ lying below $B$. For simplicity of notation, we denote by $\Delta$ the skew shape $\lambda/\mu$. We can visualize it as a block matrix $\Delta = A \cup B \cup C$, where $B$ is a square, $C$ is an unmixed shape, and $A$ is some skew shape that connects $B$ and $C$. In particular, $A$ is nonempty as we assume $G_{\lambda/\mu}$ is connected. By induction, $C = B_1 \cup \cdots \cup B_t$ where $B_i$ are prime unmixed shapes, and $B_i \cap B_{i+1}$ is an extremal block of $B_i$ and $B_{i+1}$ for all $i = 1, \ldots, t-1$.

\begin{center}
    
\begin{tikzpicture}
\draw (0,0) rectangle (2,1); 
\draw (2,0) rectangle (3,1); 
\draw (0,-2) rectangle (2,0); 

\node at (1,0.5) {$A$}; 
\node at (2.5,0.5) {$B$}; 
\node at (1,-1) {$C$}; 
\end{tikzpicture}

\end{center}

There are three cases.

\smallskip 
\noindent{\textbf{Case 1.}} $C$ is a square. In particular, $C$ has size $n- m$. Let $a$ be the largest index such that $\mu_a > 0$. Since we assume that $\mu \neq 0$, we deduce that $1 \le a < m$. Since $G_{\lambda/\mu}$ is connected, $\mu_a < n-m$. By Remark \ref{rem_shape_separation}, we deduce that the shape obtained by removing all the $j$th columns with $j \ge \mu_a + 1$ is unmixed. But this shape is a rectangle of shape $\mu_a \times (n-m)$, which is a contradiction.

\smallskip 
\noindent{\textbf{Case 2.}} $B_1$ is a nonsquare upper triangular prime unmixed shape. Assume that $B_1$ has size $p$. In particular, we have $\lambda_{m+p+1} < \lambda_{m+p} = \cdots = \lambda_{m+1} = n - m$ and $\mu_{m+p} = n - p-m$. In particular, $\mu_1 \ge n - p - m$. Assume by contradiction that $\mu_1 > n - (p+m)$. By Lemma \ref{lem_rad_unmixed}, we deduce that the shape obtained by deleting all the $j$th columns with $j > \mu_1$ is unmixed. Since $\lambda_{m+1} = n - m > \mu_1$, we deduce that all the rows from $m+1$ to $n$ survive. In other words, the remaining shape has $n - m$ rows but only $n - \mu_1 < n-m$ columns. This is a contradiction. Hence, we must have $\mu_1 = \ldots = \mu_m = n - (m+p)$. In other words, the first horizontal strip containing $B$ of $\Delta$ union with $B_1$ is a prime unmixed shape. Hence, $\Delta$ has a decomposition $\Delta = B_1' \cup B_2 \cup \cdots \cup B_t$ with $B_1'$ is upper triangular prime unmixed shape.

\smallskip 
\noindent{\textbf{Case 3.}} $B_1$ is a nonsquare lower triangular prime unmixed shape. Let $D$ be the top right extremal block of $B_1$. Assume that $D$ has size $p$. In particular, we have 
\begin{align*}
    \mu_{m+1} & = \cdots = \mu_{m+p} = n - m -p > \mu_{m+p+1}\\
    \lambda_{m+1} & \cdots = \lambda_{m+p} = n - m.
\end{align*}
Hence, $\mu_1 \ge \mu_{m+1} = n -m -p$. With an argument similar to Case 2, we deduce that $\mu_1 = n - m- p$. In other words, the first $m$ rows of $\lambda/\mu$ union with $D$ is an upper triangular prime unmixed shape and $D$ is one of its extremal block. We denote this prime shape by $B_0$. Then we have 
$$\Delta = B_0 \cup B_1 \cup \cdots \cup B_t$$
where $B_i$ are prime unmixed shapes, $B_i$, $B_{i+1}$ are of alternating type and $B_i \cap B_{i+1}$ is an extremal block of $B_i$ and $B_{i+1}$.

That concludes the proof of the theorem.
\end{proof}

We now deduce a characterization of Cohen-Macaulay skew Ferrers ideals and then classify generalized Cohen-Macaulay and Buchsbaum skew Ferrers ideals.

\begin{thm} \label{CMskewYoung}  Let $\lambda/\mu$ be a skew shape such that $G_{\lambda/\mu}$ is connected. Then $I_{\lambda/\mu}$ is Cohen-Macaulay if and only if  $\lambda/\mu$ has a canonical decomposition 
$$ \lambda/\mu = B_1 \cup \cdots \cup B_t,$$
where each $B_i$ is a prime Cohen-Macaulay shape of alternating type and $B_i \cap B_{i+1}$ is an extremal box of $B_i$ and $B_{i+1}$. 
\end{thm}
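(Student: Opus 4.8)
The plan is to reduce the Cohen-Macaulay classification to the already-established unmixed classification (Theorem \ref{thm_unmixed_skew_Ferrers}) together with the sequentially Cohen-Macaulay classification (Theorem \ref{thm_scm_skew_Ferrers}), using the standard fact that an ideal is Cohen-Macaulay if and only if it is both sequentially Cohen-Macaulay and unmixed. So the first step is: if $I_{\lambda/\mu}$ is Cohen-Macaulay, it is in particular unmixed, hence by Theorem \ref{thm_unmixed_skew_Ferrers} we get a decomposition $\lambda/\mu = B_1 \cup \cdots \cup B_t$ into prime unmixed shapes of alternating type glued along shared extremal blocks. It remains to show that each $B_i$ is in fact a prime \emph{Cohen-Macaulay} shape (i.e. a Cohen-Macaulay Ferrers diagram up to reflection), and conversely that gluing prime Cohen-Macaulay shapes in this fashion yields a Cohen-Macaulay skew Ferrers ideal.

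For the forward direction, I would argue that each block $B_i$ is recovered from $\lambda/\mu$ by the column/row deletion operations of Remark \ref{rem_shape_separation} applied at the gluing seams: deleting all columns (or rows) past the extremal block that separates $B_i$ from its neighbors exhibits (an ideal isomorphic to) $I_{B_i}$ as an associated radical — actually a localization/colon quotient — of $I_{\lambda/\mu}$. Since Cohen-Macaulayness of $I_{\lambda/\mu}$ combined with unmixedness passes to these pieces (here one should invoke Lemma \ref{lem_rad_unmixed} together with the fact that a sequentially Cohen-Macaulay unmixed ideal is Cohen-Macaulay, applying Theorem \ref{thm_scm_skew_Ferrers}'s machinery to control the sequentially CM property along the deletions), each $I_{B_i}$ is Cohen-Macaulay and unmixed, hence — by Corso--Nagel \cite[Corollary 2.7]{CN} and the description of unmixed Ferrers diagrams — $B_i$ is a prime Cohen-Macaulay shape. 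One must also check the alternating-type and extremal-box gluing conditions are inherited, but these come directly from Theorem \ref{thm_unmixed_skew_Ferrers}; the only genuinely new content is upgrading "unmixed prime shape" to "Cohen-Macaulay prime shape."

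For the converse, I would induct on $t$ exactly as in the proof of Lemma \ref{lem_unmixed_shape}. If $t = 1$ the claim is Corso--Nagel. For $t \ge 2$, peel off $B_1$ (say an upper triangular Cohen-Macaulay prime shape of size $m$, sharing its extremal block $U$ of size $p$ with $B_2$), and write, as in Lemma \ref{lem_unmixed_shape},
$$I_{\lambda/\mu} = (I_{\lambda^1/\mu^1} + (x_1,\ldots,x_p)) \cap (I_{\lambda^2/\mu^2} + (y_{n-m+1},\ldots,y_n)),$$
where $\lambda^1/\mu^1$ is $B_2 \cup \cdots \cup B_t$ and $\lambda^2/\mu^2$ is (essentially) $B_1$. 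The two ideals on the right are Cohen-Macaulay of the same dimension $n$ — the first by the inductive hypothesis and a standard "Cohen-Macaulay module plus Cohen-Macaulay linear variables" argument, the second because $B_1$ is a Cohen-Macaulay prime shape — and their sum is also Cohen-Macaulay of dimension $n$, since it corresponds to the complete bipartite edge ideal on the extremal block $U$ adjoined appropriately. A Mayer--Vietoris / short exact sequence argument on $S/(J \cap K) \hookrightarrow S/J \oplus S/K \twoheadrightarrow S/(J+K)$ then forces $S/I_{\lambda/\mu}$ to be Cohen-Macaulay of dimension $n$: all three of $S/J$, $S/K$, $S/(J+K)$ are Cohen-Macaulay with $\dim S/(J+K) = \dim S/J - 1 = \dim S/K - 1$, which is exactly the configuration that makes the intersection Cohen-Macaulay via the long exact sequence in local cohomology.

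The main obstacle I anticipate is the converse's Mayer--Vietoris step: one needs the precise dimension bookkeeping ($\dim S/(J+K) = n-1$ while $\dim S/J = \dim S/K = n$) and the vanishing of the connecting maps in local cohomology, and one must verify that $J + K$ — which encodes the "doubled" extremal block $U$ — is itself Cohen-Macaulay of the right dimension, essentially a linear cone over a complete bipartite edge ideal $K_{p,p}$, whose Cohen-Macaulayness requires $U$ to be square (which is guaranteed since $B_1, B_2$ are nonsquare prime unmixed shapes sharing $U$ as a common extremal block). Getting the dimension count exactly right at each gluing — and confirming it is consistent across all $t$ pieces — is the delicate part; everything else is bookkeeping on top of the unmixed and sequentially CM theorems already proved.
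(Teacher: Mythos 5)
Your overall strategy---reduce Cohen--Macaulayness to ``unmixed $+$ sequentially Cohen--Macaulay'' and then quote Theorems \ref{thm_unmixed_skew_Ferrers} and \ref{thm_scm_skew_Ferrers}---is exactly the paper's, and your forward direction coincides with the paper's entire proof: unmixedness yields the decomposition into prime unmixed shapes with square corner blocks, and sequential Cohen--Macaulayness forces each corner block to have size $1$ (via the complete-bipartite obstruction), so each $B_i$ is a prime Cohen--Macaulay shape. Where you diverge is the converse. The paper gets it for free from the same equivalence: the decomposition gives unmixedness by Lemma \ref{lem_unmixed_shape} and sequential Cohen--Macaulayness by the recursive criterion of Theorem \ref{thm_scm_skew_Ferrers}, hence Cohen--Macaulayness. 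You instead rerun the intersection decomposition of Lemma \ref{lem_unmixed_shape} and apply Mayer--Vietoris in local cohomology. That route is legitimate and more self-contained, but it is heavier, and it contains one substantive slip: $J+K$ is \emph{not} a cone over the complete bipartite edge ideal $K_{p,p}$. Since $J$ contributes the linear forms $x_1,\ldots,x_p$ and $K$ contributes $y_{n-m+1},\ldots,y_n$, the sum $J+K$ kills all variables of the extremal block outright and equals the skew Ferrers ideal of the residual shape plus a regular sequence of variables; its Cohen--Macaulayness then comes from the inductive hypothesis applied to that residual shape, not from any $K_{p,p}$. More importantly, the dimension bookkeeping $\dim S/(J+K)=\dim S/J-1$ works out to $\dim S/(J+K)=n-p$, so the exact sequence closes only because $p=1$, i.e., because the corner block is a single box---squareness alone (which is all the unmixed decomposition guarantees) would not suffice. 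With that correction the Mayer--Vietoris step does go through, but you should be aware that the paper's two-line argument via sequential Cohen--Macaulayness avoids this bookkeeping entirely.
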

\begin{proof} By Theorem \ref{thm_unmixed_skew_Ferrers} all corner boxes of $\lambda/\mu$ are square. By Theorem \ref{thm_scm_skew_Ferrers}, all these squares must have size $1$. The conclusion follows.
\end{proof}

\begin{thm} \label{gCMskewYoung}  Let $\lambda/\mu$ be a skew shape such that $G_{\lambda/\mu}$ is connected. The following conditions are equivalent: 
\begin{enumerate}
    \item  $G_{\lambda/\mu}$ is Buchsbaum;
    \item  $G_{\lambda/\mu}$ is generalized Cohen-Macaulay;
    \item either $G_{\lambda/\mu}$ is Cohen-Macaulay or $\lambda = (n,\ldots,n)$ and $\mu = (0,\ldots,0)$. 
\end{enumerate} 
\end{thm}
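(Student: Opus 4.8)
The implications $(3)\Rightarrow(1)\Rightarrow(2)$ are immediate: a Cohen-Macaulay ideal is Buchsbaum, and when $\lambda=(n,\ldots,n)$, $\mu=(0,\ldots,0)$ the ideal $I_{\lambda/\mu}$ is the edge ideal of the complete bipartite graph $K_{n,n}$, whose Buchsbaumness is classical (it is Cohen-Macaulay after inverting one variable and is unmixed, or one checks $H^i_\m$ directly). The implication $(1)\Rightarrow(2)$ is part of the standard chain recalled before Lemma~\ref{lem_rad_unmixed}. So the entire content is the implication $(2)\Rightarrow(3)$: if $G_{\lambda/\mu}$ is generalized Cohen-Macaulay but not Cohen-Macaulay, then $\lambda/\mu$ must be the full rectangle.

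The plan is to use the criterion of Lemma~\ref{lem_gCM}: $I_{\lambda/\mu}$ generalized Cohen-Macaulay means it is unmixed and $I_{\lambda/\mu}S[x^{-1}]$ and $I_{\lambda/\mu}S[y^{-1}]$ are Cohen-Macaulay for every variable. Since $I_{\lambda/\mu}$ is unmixed, Theorem~\ref{thm_unmixed_skew_Ferrers} gives the decomposition $\lambda/\mu = B_1\cup\cdots\cup B_t$ into prime unmixed shapes of alternating type glued along extremal blocks. First I would dispose of the case $\mu=0$ (a Ferrers diagram) and the case $\lambda=(n,\ldots,n)$ directly: for a Ferrers shape, unmixedness already forces square corner blocks, and a localization argument at $x_i$ or $y_j$ (removing the columns resp. rows of a corner block) shows that a corner block of size $\ge 2$ produces a non–Cohen-Macaulay localization unless the whole diagram is a single rectangle $K_{n,n}$; and $\lambda=(n,\ldots,n)$ is exactly the exceptional case. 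So assume $\mu\neq 0$ and $\lambda_n<\lambda_1$, so that (as in the proof of Theorem~\ref{thm_unmixed_skew_Ferrers}) we have $n=m$ and $t\ge 1$ with all $B_i$ nonsquare when $t>1$.

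The key step is: localizing $I_{\lambda/\mu}$ at a variable corresponds to deleting a strip of columns (for $x_i$) or rows (for $y_j$), and after this deletion the remaining skew Ferrers ideal must be Cohen-Macaulay by Lemma~\ref{lem_gCM} and Remark~\ref{rem_shape_separation}. I would choose $i$ so that deleting columns $\mu_i+1,\ldots,\lambda_i$ isolates a corner block $B_k$ together with its neighbor(s): concretely, pick the row index through an extremal block of some $B_k$ of size $p\ge 2$ (which exists because $\lambda/\mu$ is assumed not Cohen-Macaulay, so by Theorem~\ref{CMskewYoung} some $B_k$ is not a size-$1$ square, i.e. has size $\ge 2$). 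The resulting localized shape contains an induced $p\times p$ square corner block with $p\ge 2$ attached nontrivially, and one shows this localized skew Ferrers ideal is not Cohen-Macaulay — e.g. by Theorem~\ref{CMskewYoung} again, since that square corner block of size $p\ge 2$ violates the ``all squares have size $1$'' condition, unless the localized shape has degenerated to a single rectangle, which can only happen if $t=1$ and $\lambda/\mu$ was already the rectangle, contradicting $\mu\neq 0$, $\lambda_n<\lambda_1$. Hence no such $B_k$ of size $\ge 2$ exists, so all $B_i$ have size $1$, so by Theorem~\ref{CMskewYoung} $I_{\lambda/\mu}$ is Cohen-Macaulay, giving $(3)$.

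The main obstacle is bookkeeping the degenerate cases of the localization: one must be careful that deleting a strip of columns (or rows) can merge or destroy several blocks $B_i$ at once, and that the ``connectedness of $G_{\lambda/\mu}$'' hypothesis is used to guarantee that the strip to be deleted really does sit strictly between a corner block and the rest (so that the localized shape is a genuine nontrivial skew shape and Theorem~\ref{CMskewYoung} applies), rather than trivially splitting off an isolated rectangle. Checking that the only way to avoid producing a non–Cohen-Macaulay localization is the global rectangle $\lambda=(n,\ldots,n)$, $\mu=0$ — and in that single case verifying directly that $K_{n,n}$ is Buchsbaum — is where the real care is needed; everything else is a routine application of Lemma~\ref{lem_gCM}, Remark~\ref{rem_shape_separation}, Theorem~\ref{thm_unmixed_skew_Ferrers}, and Theorem~\ref{CMskewYoung}.
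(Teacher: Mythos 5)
Your proposal follows essentially the same route as the paper's proof: reduce everything to $(2)\Rightarrow(3)$, combine Lemma \ref{lem_gCM} with Theorem \ref{thm_unmixed_skew_Ferrers} and Remark \ref{rem_shape_separation} so that each localization at a variable deletes a strip of rows or columns and must leave a Cohen-Macaulay shape, and then derive a contradiction with Theorem \ref{CMskewYoung} from a surviving square corner block of size at least $2$, the only escape being the full square $\lambda=(n,\ldots,n)$, $\mu=0$. One small caveat: for $(3)\Rightarrow(1)$ your first justification (``Cohen-Macaulay after inverting a variable and unmixed'') only yields generalized Cohen-Macaulayness, not Buchsbaumness, so you should rely on your alternative, the direct computation of the local cohomology of $S/I_{\lambda}$ for the square, which is what the paper does.
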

\begin{proof} Clearly (1) implies (2). We now prove (2) implies (3). Since $G_{\lambda/\mu}$ is generalized Cohen-Macaulay, it is unmixed. By Theorem \ref{thm_unmixed_skew_Ferrers}, we have a decomposition $\lambda/\mu = B_1 \cup B_2 \cup \cdots \cup B_t$ of $\lambda/\mu$ into union of prime unmixed shapes $B_i$. By Theorem \ref{CMskewYoung}, if $B_i$ are Cohen-Macaulay for all $i$ then $G_{\lambda/\mu}$ is Cohen-Macaulay. Assume by contradiction that $\lambda/\mu$ is not Cohen-Macaulay and has at least $2$ corner blocks. Let $U$ be the top-right corner block of $B_1$. Taking the conjugate if necessary, we may assume that after removing $U$, we still have a block of size at least $2$. Now, let $(1,n)$ be a box in the first block. If $B_1$ is upper triangular, we consider $G \backslash N[y_n]$; if $B_1$ is lower triangular, we consider $G\backslash N[x_1]$. This corresponds to the shape obtained by removing all blocks in the same rows or same columns with the first block. By Lemma \ref{lem_gCM} the remaining shape is Cohen-Macaulay. This is a contradiction to Theorem \ref{CMskewYoung}.

Finally, we prove (3) implies (1). It remains to consider the case $\lambda = (n,\ldots,n)$ and $\mu = 0$. This follows immediately from the definition, as $H^i_\m(I_\lambda) = 0$ for all $0 < i < \dim (S/I_\lambda)$ and $H^0_\m(I_{\lambda}) \cong \k$. 
\end{proof}

\section{Skew tableau ideals}\label{sec_skew_tableau} 
In this section, we classify all sequentially Cohen-Macaulay skew tableau ideals and then all (generalized) Cohen-Macaulay and Buchsbaum skew tableau ideals. We first recall the definition of edge ideals of edge weighted graphs introduced by Paulsen and Sather-Wagstaff \cite{PS}.

Let $G$ denote a finite simple graph over the vertex set $V(G) = [n] = \{1,\ldots,n\}$ and the edge set $E(G)$. Let $\ww:E(G)\rightarrow \ZZ_{>0}$ be a weight function on edges of $G$. The edge ideal of the edge-weighted graph $(G,\ww)$ is
$$I(G,\ww) = \big( (x_i x_j)^{w(i,j)} \mid \{i,j\}\in E(G)\big) \subseteq S.$$
From the proof of \cite[Theorem 1.1]{DMV} we have the following

\begin{thm}\label{gCMallweights}
Let $G$ be a simple graph. The following statements are equivalent:
\begin{enumerate}
    \item  $I(G,\ww)$ is Cohen-Macaulay for all weight functions $\ww$.
\item  $I(G,\ww)$ is Buchsbaum for all weight functions $\ww$.
\item  $I(G,\ww)$ is generalized Cohen-Macaulay for all weight functions $\ww$.
\item $I(G,\ww)$ is unmixed for all weight functions $\ww$.
\item $G$ is the disjoint union of finitely many complete graphs.
\end{enumerate}
\end{thm}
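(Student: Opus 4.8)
The plan is to run the cyclic chain of implications $(5)\Rightarrow(1)\Rightarrow(2)\Rightarrow(3)\Rightarrow(4)\Rightarrow(5)$, so that only the first and last arrows require real work; the middle implications $(1)\Rightarrow(2)\Rightarrow(3)\Rightarrow(4)$ are the standard implications Cohen-Macaulay $\Rightarrow$ Buchsbaum $\Rightarrow$ generalized Cohen-Macaulay $\Rightarrow$ unmixed recalled in Section \ref{sec_scm_skew_Ferrers}, applied for each fixed weight function. For $(5)\Rightarrow(1)$, first I would reduce to a single complete graph: if $G = G_1 \sqcup \cdots \sqcup G_r$ is a disjoint union of complete graphs, then $I(G,\ww)$ is the mixed sum of the ideals $I(G_i,\ww|_{G_i})$ in disjoint sets of variables, and Cohen-Macaulayness is preserved under such mixed sums, so it suffices to treat $G = K_d$. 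For $G = K_d$ with an arbitrary weight function, the ideal $I(K_d,\ww) = ((x_ix_j)^{w(i,j)} \mid 1 \le i < j \le d)$ is precisely a skew tableau ideal of a staircase shape — indeed, the complete graph $K_d$ is the Ferrers graph of the staircase partition $\lambda = (d-1, d-2, \ldots, 1)$, and by Theorem \ref{thm_unmixed_skew_tableau} (or by the explicit results of \cite{HV}) a weighted staircase tableau ideal is Cohen-Macaulay; alternatively one can cite \cite{PS} or \cite{DMV} directly. (If the paper's companion result \cite{HV} is being invoked, one notes that every filling of the staircase shape satisfies the weakly-increasing hypothesis after no condition is needed since $K_d$ has no induced $4$-cycle; but the cleanest route is simply to cite the statement that edge ideals of weighted complete graphs are Cohen-Macaulay.)

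The substance of the theorem is $(4)\Rightarrow(5)$: if $I(G,\ww)$ is unmixed for \emph{every} weight function $\ww$, then $G$ is a disjoint union of complete graphs. This is where I would do the main work, following the argument from \cite{DMV}. The key point is that a graph which is not a disjoint union of complete graphs contains an induced path $P_3$ on three vertices $a - b - c$ with $\{a,c\} \notin E(G)$. I would show that one can then choose a weight function on $G$ making $I(G,\ww)$ non-unmixed. The mechanism: assign large weights to all edges except carefully chosen ones incident to the $P_3$, so that in the primary decomposition of $I(G,\ww)$ two minimal primes of different heights appear — one ``coming from'' the vertex cover perspective on the unweighted structure and one produced by the weights on the $P_3$, exactly as in the analysis of induced $C_k$, $k\neq 3,5$, in \cite{DMV}, of which $P_3$ (equivalently the complement situation) is the governing local obstruction. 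Concretely, for the path $a-b-c$ one uses the identity that $\sqrt{I(G,\ww):u}$ for a suitable monomial $u$ localizes the problem to $((x_ax_b)^{w(a,b)},(x_bx_c)^{w(b,c)})$ plus a linear part, and when the two weights differ this localized ideal is not unmixed; then Lemma \ref{lem_rad_unmixed}(2) transfers non-unmixedness back to $I(G,\ww)$.

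The main obstacle will be making the $(4)\Rightarrow(5)$ step fully rigorous without simply reproducing \cite{DMV}: one must exhibit an \emph{explicit} bad weight function and compute enough of the primary decomposition (or enough associated primes) to see two different dimensions. I would handle this by the associated-radical reduction above, which collapses the global combinatorics of $G$ to the single edge-pair coming from an induced $P_3$ and reduces the claim to the elementary fact that $((x_1x_2)^a, (x_2x_3)^b)$ with $a \neq b$ has associated primes of different heights (e.g. $(x_2)$ and $(x_1,x_3)$ behave differently once weights are unequal). Since the hypothesis quantifies over \emph{all} weight functions, we are free to pick the weights on the two relevant edges unequal and everything else whatever is convenient; the only care needed is to ensure the auxiliary monomial $u$ used to form the associated radical is not in $I(G,\ww)$, which is arranged by taking $u$ to be a high power of the product of variables outside $\{a,b,c\}$ together with suitably low powers of $x_a, x_b, x_c$. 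With that reduction in place the proof is short; citing \cite[Theorem 1.1]{DMV} for this implication is also legitimate since the theorem statement explicitly says the result follows ``from the proof of \cite[Theorem 1.1]{DMV}''.
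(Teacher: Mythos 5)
Your overall architecture (a cycle of implications with the real work in $(5)\Rightarrow(1)$ and $(4)\Rightarrow(5)$) is sound, and your treatment of $(4)\Rightarrow(5)$ --- pass the hypothesis to induced subgraphs via associated radicals and Lemma \ref{lem_rad_unmixed}, then observe that an induced $P_3$ is the obstruction --- is essentially the paper's argument. One small correction there: you do not need unequal weights on the two edges of the $P_3$; for every $s,t$ the ideal $((x_ax_b)^s,(x_bx_c)^t)$ has minimal primes $(x_b)$ and $(x_a,x_c)$ of different heights, so it is mixed already, which only simplifies your reduction.

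The genuine problem is in your self-contained justification of $(5)\Rightarrow(1)$. The complete graph $K_d$ is \emph{not} the Ferrers graph of the staircase partition $(d-1,\ldots,1)$: Ferrers and skew Ferrers graphs are bipartite by construction (on vertex sets $X\cup Y$), whereas $K_d$ contains triangles for $d\ge 3$, so $I(K_d,\ww)$ is not a (skew) tableau ideal and neither Theorem \ref{thm_unmixed_skew_tableau} nor the results of \cite{HV} apply to it. Moreover, even for the genuine bipartite staircase tableau ideal your parenthetical claim fails: the staircase Ferrers graph does contain induced $4$-cycles (already for $\lambda=(3,2,1)$ the vertices $x_1,x_2,y_1,y_2$ induce a $K_{2,2}$), and by \cite{HV} a staircase tableau ideal is Cohen--Macaulay only when the filling is weakly increasing, not for arbitrary fillings. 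Your fallback --- citing \cite{PS} or \cite{DMV} for the Cohen--Macaulayness of weighted edge ideals of disjoint unions of complete graphs --- is legitimate and rescues the step, and is in substance what the paper does: it closes the loop by deducing $(1)$ from $(4)$ together with $(5)$, since a disjoint union of cliques has no induced $C_k$ with $k\ne 3,5$, hence $I(G,\ww)$ is sequentially Cohen--Macaulay for all $\ww$ by \cite[Theorem 1.1]{DMV}, and sequentially Cohen--Macaulay plus unmixed gives Cohen--Macaulay. If you want a genuinely self-contained $(5)\Rightarrow(1)$ you must prove directly that $I(K_d,\ww)$ is Cohen--Macaulay for every $\ww$; the staircase argument as written does not do this.
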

\begin{proof} We already have $(1) \implies (2) \implies (3) \implies (4)$. For $(4) \implies (5)$, by Lemma \ref{lem_rad_unmixed} and the proof of \cite[Theorem 1.1]{DMV}, we deduce that all induced subgraphs of $G$ are unmixed. In particular, $P_3$ is not an induced subgraph of $G$. Hence, $G$ is the disjoint union of finitely many complete graphs. By \cite[Theorem 1.1]{DMV}, $(4) \implies (1)$. The conclusion follows.    
\end{proof}

A \textit{filling} of a skew Young diagram is an assignment of positive integers $w(i,j) $ to each box $ (i,j) $ in the skew diagram. 
\begin{defn}
    The {\it skew tableau ideal} associated with a skew Young diagram $\lambda/\mu$ with a given filling $Y$ is the edge ideal of the edge-weighted skew Ferrers graph $G_{\lambda/\mu}$ with the weight function given by the filling $Y$. In other words,
$$I (Y):= ((x_iy_j)^{w(i,j)}\mid 1\le i\le n, \mu_i+1\le j \le \lambda_i).$$ 
\end{defn}
Since nontrivial skew Ferrers graphs are not complete graphs, we deduce that there exist a filling $Y$ such that $I(Y)$ is not Cohen-Macaulay. We first establish a characterization of unmixed skew tableau ideals.

\begin{proof}[Proof of Theorem \ref{thm_unmixed_skew_tableau}] We first prove the necessary condition. Assume that $I(Y)$ is unmixed. By Lemma \ref{lem_rad_unmixed}, Theorem \ref{thm_unmixed_skew_Ferrers}, and symmetry, we may assume that $\mu = 0$ and $\lambda$ is unmixed. We prove by induction on the number of corner blocks of $\lambda$. First assume that $\lambda$ is a square. Let $\omega$ be the smallest weight on the first row of $\lambda$. Assume by contradiction that there exists $i$ such that $w(1,i) > \omega$. We have $\sqrt{I(Y) : x_1^{\omega}}$ is the Ferrers ideal corresponding to deleting all columns $j$ such that $w(1,j) = \omega$. Since $w(1,i) > \omega$, this column still survive. Hence, the resulting Ferrers diagram is a nonsquare rectangle. This is a contradiction. Thus, the weight function is constant on the first row. Similarly, $Y$ is constant on the first column. This argument apply to any rows and columns of $\lambda$. Hence, $Y$ is constant. 

Now, assume that $\lambda$ has more than one corner blocks. Let $B_1, \ldots, B_t$ be the square corner blocks of $\lambda$ of size $b_1, \ldots, b_t$ respectively. By Lemma \ref{lem_rad_unmixed} and induction, we deduce that $Y$ is constant on all the blocks of $\Delta_1$ and $\Delta_2$ where $\Delta_1$ is obtained by deleting the first $b_1$ rows and $\Delta_2$ is obtained by deleting the first $b_t$ columns. In other words, $Y$ is constant on all blocks except the first block which is the intersection of the first $b_1$ rows and the first $b_t$ columns. The argument in the previous paragraph applies, as if $Y$ is not constant on this block, then we will get an unmixed shape of non-square size, which is a contradiction.

We now prove that $Y$ is weakly increasing on both rows and columns. By induction and Lemma \ref{lem_rad_unmixed}, and symmetry, it suffices to prove the weakly increasing property on the blocks of the first horizontal strip. Assume by contradiction that $Y$ is not weakly increasing on the first horizontal strip. As in the proof of \cite[Corollary 3.7]{HV}, we let $j$ be the smallest index such that $w(1,j) > w(1,j+1)$. We have 
$$\sqrt{I(Y) : x^{w(1,j+1)}} = I_\lambda + (y_{j+1}, y_\ell \mid \ell \in V)$$
for some $V \subseteq \{j+2,\ldots,n\}$. The latter ideal corresponds to the deletion of the $j+1$th column and $\ell$th column for $\ell \in V$. The remaining shape is not of square type, we deduce a contradiction.

Now, we prove the sufficiency condition. We assume that $\lambda/\mu = B_1 \cup \cdots \cup B_t$ is the canonical decomposition of prime unmixed shapes and $Y$ is constant on each block of $B_j$ and $Y$ is weakly increasing on triangular prime unmixed shapes and weakly decreasing on triangular prime unmixed shapes. We consider the case where $B_1$ is an upper triangular prime unmixed shape as in the proof of Lemma \ref{lem_unmixed_shape}. The other case can be done in similar manner. Let $U$ be an extremal block of $B_1$. Assume that $B_1$ has size $m$, $U$ has size $p$ and the constant value of $Y$ on $U$ is $\omega$. In particular, we have $\lambda_1 = \cdots = \lambda_p = n > \lambda_{p+1} = n - p$, $\mu_1 = \cdots = \mu_{m} = n -m$. Let $Y_1$ be the skew tableau obtained by deleting the first $p$ rows of $Y$ and $Y_2$ be the skew tableau obtained by deleting the lat $m$ columns of $Y$. We have 
\begin{align*}
    I(Y) &= I(Y_1) + (x_1^\omega, \ldots, x_p^{\omega})(y_{n-m+1}^\omega, \ldots, y_{n}^\omega)\\
    & = (I(Y_1) + (x_1^\omega, \ldots, x_p^\omega))  \cap (I(Y_1) + (y_{n-m+1}^\omega, \ldots, y_{n}^\omega)).
\end{align*}
Since $Y$ is weakly increasing on $B_1$, we deduce that $(I(Y_1) + (y_{n-m+1}^\omega, \ldots, y_{n}^\omega)) = I(Y_2) +  (y_{n-m+1}^\omega, \ldots, y_{n}^\omega)$. By induction, 
$$(I(Y_1) + (x_1^\omega, \ldots, x_p^\omega)) \text{ and } I(Y_2) +  (y_{n-m+1}^\omega, \ldots, y_{n}^\omega)$$ 
are unmixed of dimension $n$. The conclusion follows.
\end{proof}

\begin{exam} The following example give an unmixed and a mixed tableau ideals on the same unmixed shape. The red and yellow blocks are the connecting blocks.
    
\begin{center}
\begin{tabular}{ccc}
  \begin{ytableau}
       \none & \none & \none & \none & *(white)2  \\
     \none    & \none & *(white)1 & *(red)2 & *(white)2  \\
      \none    &  \none  & *(yellow)2  \\
    *(white)3    & *(white)3 & *(white)1   \\
    *(white)3    & *(white)3  & *(white)1 
   \end{ytableau} &   \qquad \quad & 
 \begin{ytableau}
       \none & \none & \none & \none & *(white)2  \\
     \none    & \none & *(white)1 & *(red)2 & *(white)2  \\
      \none    &  \none  & *(yellow)2  \\
    *(white)3    & *(white)2 & *(white)3   \\
    *(white)3    & *(white)2  & *(white)3 
   \end{ytableau}
\end{tabular} 
\end{center} 

\end{exam}
We now deduce a generalization of \cite[Corollary 3.7]{HV} for skew tableau ideals and then classify generalized Cohen-Macaulay and Buchsbaum skew tableau ideals.

\begin{thm} \label{thm_CM_skewtableau}  Let $Y$ be an arbitrary filling of a skew Young diagram $\lambda/\mu$.  The following conditions are equivalent: 
\begin{enumerate}
    \item $I(Y)$ is Cohen--Macaulay; 
    \item $\lambda/\mu$ is Cohen-Macaulay with a decomposition $\lambda/\mu = B_1 \cup  \cdots \cup B_t$; $Y$ is weakly increasing on upper triangular prime shapes and weakly decreasing on lower triangular prime shapes.
\end{enumerate} 
\end{thm}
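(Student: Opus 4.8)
The plan is to reduce the Cohen--Macaulay property to the combination of the unmixed property (already classified in Theorem \ref{thm_unmixed_skew_tableau}) together with the sequentially Cohen--Macaulay property inherited from the underlying shape being Cohen--Macaulay in the sense of Theorem \ref{CMskewYoung}. The conceptual input is the standard fact that an ideal is Cohen--Macaulay if and only if it is both unmixed and sequentially Cohen--Macaulay; this lets me treat the two implications as essentially disjoint bookkeeping tasks. First I would handle the easy direction $(1)\Rightarrow(2)$: if $I(Y)$ is Cohen--Macaulay, then it is unmixed, so Theorem \ref{thm_unmixed_skew_tableau} applies and gives the decomposition $\lambda/\mu=B_1\cup\cdots\cup B_t$ with the stated monotonicity on the prime pieces (weakly increasing on upper triangular, weakly decreasing on lower triangular, constant on blocks). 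Moreover, Cohen--Macaulay implies $I_{\lambda/\mu}=\sqrt{I(Y)}$ is Cohen--Macaulay by Lemma \ref{lem_rad_unmixed}(2) (the radical is an associated radical, taking $u=1$), so by Theorem \ref{CMskewYoung} each $B_i$ is a prime Cohen--Macaulay shape of alternating type, which is exactly condition (2).

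For the converse $(2)\Rightarrow(1)$, I would argue that the hypotheses force $I(Y)$ to be unmixed (directly by the sufficiency half of Theorem \ref{thm_unmixed_skew_tableau}, since weak monotonicity on the prime shapes and constancy on blocks is what that theorem requires; note that condition (1) of Theorem \ref{thm_unmixed_skew_tableau}, constancy of $Y$ on each block, must be added or derived here — I would want to double-check whether the statement of Theorem \ref{thm_CM_skewtableau} as written implicitly assumes it via "Cohen-Macaulay with a decomposition", and if not, include it). Then it remains to show $I(Y)$ is sequentially Cohen--Macaulay, for which I would invoke Lemma \ref{lem_scm_radical}: it suffices to show every associated radical of $I(Y)$ is sequentially Cohen--Macaulay. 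The key computation is that an associated radical $\sqrt{I(Y):u}$ is obtained, up to adding variables, by deleting certain rows and columns of the shape and that the surviving shape, with its inherited filling, still satisfies condition (2) — this uses the monotonicity of $Y$ to control which edges survive when we saturate at a power of some $x_i$ or $y_j$. Since a Cohen--Macaulay shape in the sense of Theorem \ref{CMskewYoung} has all its corner blocks of size $1$, the induced sub-shapes remain of this type, and by induction (on the number of boxes, say) their skew tableau ideals are sequentially Cohen--Macaulay; combined with unmixedness this gives Cohen--Macaulayness.

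The main obstacle I expect is the precise description of the associated radicals $\sqrt{I(Y):u}$ for a general monomial $u$ and verifying that the resulting shape-plus-filling still meets condition (2). This is where the weak monotonicity hypothesis does real work: when one saturates by $x_i^{a}$, the edges at $x_i$ with weight $\le a$ become variables $y_j$, and because $Y$ is (say) weakly increasing along the row, the set of such $j$ is an initial or final segment, so the deletion of those columns preserves the property that each remaining block is a single box and each remaining prime piece keeps its monotone filling. I would mirror the argument in the proof of Theorem \ref{thm_unmixed_skew_tableau} and the reduction in the proof of Theorem \ref{thm_scm_skew_Ferrers}, being careful at the "connecting blocks" where two prime shapes of alternating type meet, since there the filling can jump and one must check both one-sided deletions behave. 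Once this combinatorial lemma is in place, the induction closes routinely and the theorem follows by combining unmixed (Theorem \ref{thm_unmixed_skew_tableau}) with sequentially Cohen--Macaulay (Lemma \ref{lem_scm_radical}, Theorem \ref{thm_vertex_decomposable}).
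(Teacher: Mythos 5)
Your proposal is correct and follows the same skeleton as the paper's argument: the paper's proof of this theorem is a one-line citation of Theorem \ref{CMskewYoung} and Theorem \ref{thm_unmixed_skew_tableau}, resting on exactly the decomposition you use, namely that a monomial ideal is Cohen--Macaulay if and only if it is both unmixed and sequentially Cohen--Macaulay. Your direction $(1)\Rightarrow(2)$ is precisely the intended argument. For $(2)\Rightarrow(1)$ you re-derive sequential Cohen--Macaulayness by hand through an analysis of the associated radicals $\sqrt{I(Y):u}$; this works, but the combinatorial lemma you flag as the main obstacle can be bypassed by checking instead that condition (2) implies the recursive hypothesis of Theorem \ref{thm_scm_skew_tableau}: in a Cohen--Macaulay shape the corner blocks are single boxes, so for instance when $\lambda_1>\lambda_2$ the set $\{j\mid j>\lambda_2\}$ is the single column $j=\lambda_1$, and weak monotonicity of $Y$ along the first row forces $Y_2$ to be the tableau obtained by deleting every column meeting the first row; both $Y_1$ and $Y_2$ then inherit condition (2), and induction closes the sequential Cohen--Macaulay half without touching general saturations. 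What your extra care buys is a self-contained argument that does not lean on Theorem \ref{thm_scm_skew_tableau}, which in the paper is proved only afterwards (there is no circularity either way, since that proof does not use the present theorem). Finally, your worry about condition (1) of Theorem \ref{thm_unmixed_skew_tableau} (constancy of $Y$ on blocks) is vacuous here: the prime Cohen--Macaulay shapes are staircases and their reflections, so every block of a Cohen--Macaulay skew shape is a single box and constancy holds automatically.
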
 
\begin{proof} The conclusion follows from Theorem \ref{CMskewYoung} and Theorem \ref{thm_unmixed_skew_tableau}.
\end{proof}

\begin{thm}\label{thm_gCM_skew_tableau}   Let $Y$ be an arbitrary filling of a skew  Young diagram $\lambda/\mu$.  The following conditions are equivalent: 
\begin{enumerate}
    \item $I(Y)$ is Buchsbaum; 
    \item $I(Y)$ is generalized Cohen--Macaulay; 
    \item either $I(Y)$ is Cohen-Macaulay or $\lambda = (n,\ldots,n)$ and $\mu = 0$ and $Y$ is constant.
    \end{enumerate}
\end{thm}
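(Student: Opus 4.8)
The plan is to mirror the structure of the proof of Theorem~\ref{gCMskewYoung}, using Theorem~\ref{thm_unmixed_skew_tableau} in place of Theorem~\ref{thm_unmixed_skew_Ferrers} and the classification in Theorem~\ref{thm_CM_skewtableau} in place of Theorem~\ref{CMskewYoung}. The implications $(3)\implies(1)\implies(2)$ are the easy ones: $(1)\implies(2)$ is the standard chain Cohen--Macaulay $\implies$ Buchsbaum $\implies$ generalized Cohen--Macaulay, while for $(3)\implies(1)$, in the Cohen--Macaulay case there is nothing to prove, and in the case $\lambda=(n,\ldots,n)$, $\mu=0$, $Y$ constant with value $\omega$, the ideal $I(Y)$ is (up to the substitution $x_i\mapsto x_i$, $y_j\mapsto y_j$) the $\omega$-th Veronese-type power structure on the complete bipartite edge ideal; concretely $I(Y)=((x_iy_j)^\omega)$, and one checks directly that $H^i_\m(S/I(Y))=0$ for $0<i<\dim(S/I(Y))$ with $H^0_\m(S/I(Y))$ of finite length, exactly as in the last paragraph of the proof of Theorem~\ref{gCMskewYoung}. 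Alternatively, since $\sqrt{I(Y)}=I_\lambda$ with $\lambda=(n,\ldots,n)$, one invokes Lemma~\ref{lem_rad_unmixed}(1) together with the fact that Buchsbaumness of $I_\lambda$ in this case is already established, and notes that $I(Y)$ and its radical have the same local cohomology in the relevant range because the quotient $\sqrt{I(Y)}/I(Y)$ is supported only at $\m$.

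The substantive direction is $(2)\implies(3)$. Here I would argue exactly as in Theorem~\ref{gCMskewYoung}. Assume $I(Y)$ is generalized Cohen--Macaulay. Then $I(Y)$ is unmixed, so by Theorem~\ref{thm_unmixed_skew_tableau} the underlying shape $\lambda/\mu$ is unmixed with canonical decomposition $\lambda/\mu=B_1\cup\cdots\cup B_t$ into prime unmixed shapes, $Y$ is constant on each block, weakly increasing on the upper triangular $B_i$ and weakly decreasing on the lower triangular $B_i$. If every $B_i$ is a square (equivalently, a single box, since the corner blocks of a Cohen--Macaulay shape have size $1$), then by Theorem~\ref{thm_CM_skewtableau} the ideal $I(Y)$ is Cohen--Macaulay and we are in case (3). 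Otherwise some $B_i$ is a nonsquare prime unmixed shape, hence has a corner block of size $\ge 2$. If $t\ge 2$, or if $t=1$ but $\mu\neq 0$ and $\lambda\neq(n,\ldots,n)$, then as in Theorem~\ref{gCMskewYoung} we can, after possibly taking the conjugate, find an extremal block $U$ (say of $B_1$, at position containing the box $(1,n)$) such that removing all blocks sharing a row or a column with $U$ leaves a shape with a block of size $\ge 2$; passing to the corresponding localization $IS[x_i^{-1}]$ or $IS[y_j^{-1}]$ and using Lemma~\ref{lem_gCM}, this localized ideal must be Cohen--Macaulay, and its radical is the skew Ferrers ideal of the remaining shape, which by Theorem~\ref{thm_CM_skewtableau} (or already Theorem~\ref{CMskewYoung}) is not Cohen--Macaulay — a contradiction. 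This forces $t=1$ with $B_1$ nonsquare, $\mu=0$, $\lambda=(n,\ldots,n)$ is impossible simultaneously (a square-free rectangle that is a single prime unmixed nonsquare shape must be the full $(n,\ldots,n)$ with $\mu=0$), so we land in the case $\lambda=(n,\ldots,n)$, $\mu=0$. In that case unmixedness of $I(Y)$ plus Theorem~\ref{thm_unmixed_skew_tableau} forces $Y$ to be constant, which is precisely the second alternative of (3).

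The main obstacle I anticipate is the localization/deletion bookkeeping in the $(2)\implies(3)$ step: one must verify carefully that deleting the rows and columns meeting the chosen extremal block $U$ (which corresponds to $\sqrt{I(Y)S[y_n^{-1}]}$ or $\sqrt{I(Y)S[x_1^{-1}]}$) yields a connected skew shape that still carries a non-square corner block, so that its skew Ferrers ideal fails to be Cohen--Macaulay by Theorem~\ref{CMskewYoung}; the choice of which of the two extremal blocks of $B_1$ to remove, and the use of the conjugate to ensure the leftover block has size $\ge 2$, is the delicate point, handled exactly as in the proof of Theorem~\ref{gCMskewYoung}. A secondary point requiring care is that Lemma~\ref{lem_gCM} is stated for $IS[x_i^{-1}]$, and one must note the radical of $I(Y)S[x_i^{-1}]$ equals $I_{\lambda/\mu}S[x_i^{-1}]$ up to the irrelevant unit, together with Lemma~\ref{lem_rad_unmixed}(2), to transfer the non-Cohen--Macaulay obstruction back from the skew Ferrers ideal to $I(Y)$ itself; this is where the reduction of the weighted problem to the unweighted one in Section~\ref{sec_scm_skew_Ferrers} does the real work.
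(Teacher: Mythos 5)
Your overall architecture --- $(1)\Rightarrow(2)$ standard, $(2)\Rightarrow(3)$ by passing to the underlying shape and reusing the unweighted classification, $(3)\Rightarrow(1)$ reducing to the constant filling of a full rectangle --- matches the paper's, and your $(2)\Rightarrow(3)$ is correct, though more laborious than necessary: once Lemma \ref{lem_rad_unmixed}(2) gives that $I_{\lambda/\mu}=\sqrt{I(Y)}$ is generalized Cohen--Macaulay, Theorem \ref{gCMskewYoung} already yields the dichotomy ``$G_{\lambda/\mu}$ Cohen--Macaulay or $\lambda=(n,\ldots,n)$, $\mu=0$'', and Theorems \ref{thm_unmixed_skew_tableau} and \ref{thm_CM_skewtableau} finish; you do not need to redo the block-removal/localization argument at the weighted level.

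The genuine gap is in $(3)\Rightarrow(1)$ for $\lambda=(n,\ldots,n)$, $\mu=0$, $Y\equiv\omega$, where both of your arguments fail. Writing $I(Y)=(x_1^\omega,\ldots,x_n^\omega)\cap(y_1^\omega,\ldots,y_n^\omega)$ and applying Mayer--Vietoris to this intersection of two Cohen--Macaulay ideals of dimension $n$ gives $H^0_\m(S/I(Y))=0$ and
$$H^1_\m(S/I(Y))\cong S/(x_1^\omega,\ldots,x_n^\omega,y_1^\omega,\ldots,y_n^\omega),$$
so your claim that $H^i_\m(S/I(Y))=0$ for all $0<i<\dim(S/I(Y))$ is false whenever $n\ge 2$ (it already fails for the radical, where $H^1_\m\cong\k$). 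Your fallback argument is unsound on two counts: Lemma \ref{lem_rad_unmixed}(1) runs in the wrong direction (Buchsbaumness of $I$ implies that of $\sqrt I$, not conversely), and $\sqrt{I(Y)}/I(Y)$ is \emph{not} supported only at $\m$ --- it is nonzero at the minimal primes $(x_1,\ldots,x_n)$ and $(y_1,\ldots,y_n)$ --- so $S/I(Y)$ and $S/\sqrt{I(Y)}$ do not have isomorphic local cohomology (the displayed $H^1$ has length $\omega^{2n}$ versus $1$). The paper handles this step by citing \cite[Theorem 3.3]{MN} rather than by any such direct computation. Note, as a warning, that for $\omega\ge 2$ the displayed module is not annihilated by $\m$, which is incompatible with Buchsbaumness of $S/I(Y)$; so this case cannot be settled by the kind of soft argument you propose, and the constant-rectangle alternative in $(3)$ deserves careful re-examination against the cited result.
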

\begin{proof} That (1) implies (2) is well-known. By Theorem \ref{thm_unmixed_skew_tableau}, Theorem \ref{gCMskewYoung} and the fact that generalized Cohen-Macaulay ideals are unmixed, we deduce that (2) implies (3).

We now prove that (3) implies (1). It remains to consider the case $\lambda = (n,\ldots,n)$, $\mu = 0$ and $Y$ is constant. The conclusion follows from \cite[Theorem 3.3]{MN}.
\end{proof}

To establish the sequentially Cohen-Macaulay property of skew tableau ideals, we need to the following results, \cite[Lemma 2.13, Lemma 2.15]{HV} about the associated radicals of edge ideals of edge weighted graphs.
\begin{lem}\label{lem_associated_radicals_edge_weight} Let $G$ be a simple graph on the vertex set $V(G) = \{1,\ldots,n\}$ and $\ww: E(G) \to \ZZ_+$ be a weight function. For any exponent $\a \in \NN^n$, let 
$$U=\{i \mid \text{there exists } j \text{ such that } \{i,j\}\in E(G) \text{ and }  a_i <  w(i,j) \le a_j\}.$$ 
Then  
$$\sqrt{I(G,{\ww}) : x^\a} = I(G\backslash U) + (x_i \mid i \in U),$$
where $I(G\backslash U)$ is the edge ideal of the induced subgraph of $G$ on $V(G) \backslash U$.
\end{lem}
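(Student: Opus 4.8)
The plan is to prove the equality of monomial ideals by a direct double-inclusion argument, working with squarefree monomials in the radical. Recall that for any monomial ideal $I$ and any monomial $u$, the radical $\sqrt{I:u}$ is a squarefree monomial ideal, generated by those squarefree monomials $f$ with $f^N \in (I:u)$ for $N \gg 0$; since $I(G,\ww)$ has a finite generating set, it suffices to take $N$ equal to the largest exponent $s$ appearing in a generator $(x_ix_j)^{w(i,j)}$. So throughout I will test membership of a squarefree monomial $f = \prod_{i \in F} x_i$ (with $F \subseteq V(G)$) in $\sqrt{I(G,\ww):x^\a}$ via the condition $f^s x^\a \in I(G,\ww)$, i.e.\ there is an edge $\{i,j\}\in E(G)$ with $(x_ix_j)^{w(i,j)} \mid f^s x^\a$.

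First I would unpack what divisibility $(x_ix_j)^{w(i,j)} \mid f^s x^\a$ means coordinatewise: it says $a_i + s\,[i\in F] \ge w(i,j)$ and $a_j + s\,[j \in F] \ge w(i,j)$. Since $s$ is at least the maximum weight, $i \in F$ already forces $a_i + s \ge w(i,j)$ automatically, and likewise for $j$; whereas if $i \notin F$ the first inequality becomes $a_i \ge w(i,j)$. So the edge $\{i,j\}$ "witnesses" $f^s x^\a \in I(G,\ww)$ precisely when, for each endpoint, either that endpoint lies in $F$ or else its $\a$-value is already $\ge w(i,j)$. With the set $U$ as defined, I would record the key observation: $i \in U$ iff $i$ has a neighbor $j$ with $a_i < w(i,j) \le a_j$; in particular every vertex of $U$ is "activated" by an edge whose other endpoint has large $\a$-value, and no vertex of $U$ lies in any edge of $G \setminus U$ that could be witnessed purely by $\a$ — this is what makes $U$ exactly the set of variables that must be adjoined.

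For the inclusion $\supseteq$: the generators $x_i$ for $i \in U$ lie in the left side because, taking the neighbor $j$ with $a_i < w(i,j) \le a_j$, the monomial $x_i^s x^\a$ is divisible by $(x_ix_j)^{w(i,j)}$ (use $a_i + s \ge w(i,j)$ and $a_j \ge w(i,j)$). The generators $x_kx_\ell$ of $I(G\setminus U)$ lie in the left side because for an edge $\{k,\ell\}$ of $G\setminus U$ the monomial $(x_kx_\ell)^s x^\a$ is divisible by $(x_kx_\ell)^{w(k,\ell)}$ once we check $a_k + s \ge w(k,\ell)$ and $a_\ell + s \ge w(k,\ell)$, which hold trivially. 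For the reverse inclusion $\subseteq$: take a squarefree $f = \prod_{i\in F}x_i$ with $f^s x^\a \in I(G,\ww)$, witnessed by an edge $\{i,j\}$. If $F \cap U \ne \emptyset$ we are done. Otherwise $F \cap U = \emptyset$, and I would argue that then $\{i,j\} \in E(G\setminus U)$, whence $f \in I(G\setminus U)$. Indeed suppose for contradiction $i \in U$ (say). By definition of $U$ there is $j'$ with $a_i < w(i,j') \le a_{j'}$; combined with the witnessing inequality $a_i + s[i\in F] \ge w(i,j)$ and $i \notin F$ (since $F\cap U=\emptyset$), one derives that $j$ must be handled — here I would spell out that $i \notin F$ forces $a_i \ge w(i,j)$ from the witness, and then the subtlety is that $j$ itself might be in $U$; I must rule that out as well, again using $j \notin F$ (as $F\cap U = \emptyset$) to get $a_j \ge w(i,j)$, contradicting $a_i < w(i,j)$ only if... — in fact the clean way is: the witnessing edge with $i\notin F,\ j\notin F$ gives $a_i\ge w(i,j)$ and $a_j \ge w(i,j)$, so neither $i$ nor $j$ can satisfy the strict inequality $a_\bullet < w(i,j)$ needed to put it in $U$ via \emph{this} edge, and one still has to exclude membership in $U$ via \emph{another} edge; that requires the complementary observation that deleting $U$ does not destroy this edge, i.e.\ $i,j\notin U$, which I would obtain by noting that if, say, $i\in U$ then $x_i$ divides $f^s x^\a$'s... no — the correct finish is simply that $i,j\notin U$ is \emph{not} needed edge-by-edge; rather $I(G,\ww)=I(G\setminus U,\ww)+(x_i : i\in U)$ as ideals, proved by checking each generator, so any squarefree $f$ with $f^sx^\a \in I(G,\ww)$ and $F\cap U=\emptyset$ actually has its witnessing generator lying in $I(G\setminus U,\ww)$, hence $f^s x^\a \in I(G\setminus U,\ww)$ and $f\in \sqrt{I(G\setminus U):x^\a}=I(G\setminus U)$ because $G\setminus U$ has weights $\le s$ and $\a$-values are irrelevant after one more application.

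The main obstacle I anticipate is precisely this last bookkeeping: showing that when $F$ avoids $U$, the single edge witnessing $f^s x^\a \in I(G,\ww)$ has \emph{both} endpoints outside $U$, so that it is genuinely an edge of $G\setminus U$. The cleanest route, which I would adopt, is to first establish the ideal identity $I(G,\ww) = I(G\setminus U,\ww') + (x_i \mid i\in U)$ after specializing/localizing by $\a$ — more precisely to prove directly that a monomial $m$ with $x^\a \mid m$ lies in $I(G,\ww)$ iff $m \in (x_i : i\in U)$ or $m \in I(G\setminus U,\ww)$ — and then the radical computation is immediate: $\sqrt{I(G,\ww):x^\a}$ is generated by squarefree $f$ with $f^s x^\a \in I(G,\ww)$, and by the identity each such $f$ is divisible by some $x_i$ ($i\in U$) or has $f^s x^\a \in I(G\setminus U,\ww)$; in the latter case, since all weights in $G\setminus U$ are $\le s$ and no vertex of $G\setminus U$ is activated by $\a$ — wait, that is false, vertices of $G\setminus U$ can still have large $\a$-values — so instead I simply note $\sqrt{I(G\setminus U,\ww):x^\a} = I(G\setminus U)$ because applying the same lemma to $G\setminus U$ produces an empty "$U$" by construction. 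This last point — that the $U$ built from $(G\setminus U,\ww,\a)$ is empty — is the crux and follows because any vertex $i\in V(G)\setminus U$ has no neighbor $j$ with $a_i < w(i,j)\le a_j$; removing edges (to pass to $G\setminus U$) only removes potential witnesses, so it stays empty. I would present the argument in this order: (i) the coordinatewise divisibility criterion; (ii) the specialized ideal identity by checking generators; (iii) the empty-$U$ remark for $G\setminus U$; (iv) assemble (ii) and (iii) to conclude, citing that $\sqrt{J:u}$ for a monomial ideal $J$ with generator-exponents $\le s$ is $\{f \text{ squarefree} : f^s u \in J\}$.
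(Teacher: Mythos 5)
First, a point of comparison: the paper gives no proof of this lemma at all --- it is imported verbatim from \cite[Lemma 2.13]{HV} --- so your argument can only be judged against the statement itself. Your skeleton is the natural one: reduce membership of a squarefree monomial $f$ in $\sqrt{I(G,\ww):x^{\a}}$ to the condition $f^s x^{\a}\in I(G,\ww)$ with $s$ the maximal weight, unpack the divisibility $(x_ix_j)^{w(i,j)}\mid f^sx^{\a}$ coordinatewise, and argue by double inclusion. Your proof of the inclusion $\supseteq$ is complete and correct.

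The inclusion $\subseteq$, however, has a genuine gap, and your write-up visibly stalls (twice breaking off with ``no ---'') exactly where the missing idea is needed. The missing ingredient is the hypothesis $x^{\a}\notin I(G,\ww)$, which is implicit in the paper since associated radicals are only formed from monomials $u\notin I$; without it the lemma is simply false (if $x^{\a}\in I(G,\ww)$ the left side is the unit ideal and the right side is not). You never invoke this hypothesis, yet it is precisely what closes the case $\supp(f)\cap U=\emptyset$: if $\{i,j\}$ witnesses $f^sx^{\a}\in I(G,\ww)$ and, say, $i\notin\supp(f)$, then $a_i\ge w(i,j)$; if moreover $j\notin\supp(f)$ then $a_j\ge w(i,j)$ and $x^{\a}\in I(G,\ww)$, a contradiction, while if $j\in\supp(f)$ then $j\notin U$, so $a_j<w(i,j)\le a_i$ is impossible, forcing $a_j\ge w(i,j)$ and again $x^{\a}\in I(G,\ww)$. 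Hence both endpoints of the witnessing edge lie in $\supp(f)\setminus U$, so $\{i,j\}\in E(G\backslash U)$ and $x_ix_j\mid f$, which is the desired conclusion. Your substitutes for this step do not work. The asserted identity $I(G,\ww)=I(G\backslash U,\ww)+(x_i\mid i\in U)$ is false whenever $U\ne\emptyset$ (the right side contains linear forms). The weakened forward implication --- every monomial of $I(G,\ww)$ is divisible by some $x_i$ with $i\in U$ or lies in $I(G\backslash U,\ww)$ --- is true but insufficient: a witnessing edge may meet $U$ at a vertex $i\notin\supp(f)$ with $a_i\ge w(i,j)$, and then you cannot conclude that any $x_i$ with $i\in U$ divides $f$ itself; ruling this out again requires the $x^{\a}\notin I(G,\ww)$ argument above. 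Finally, the closing reduction ``apply the lemma to $G\backslash U$, whose associated $U$ is empty'' is circular: the case $U=\emptyset$, i.e.\ $\sqrt{I(G,\ww):x^{\a}}=I(G)$, is exactly the nontrivial content to be proved, and it needs the same argument.
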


\begin{lem}\label{lem_associated_rad_2} Let $(G,{\ww})$ be an edge-weighted graph. Assume that $\sqrt{I(G,{\ww}):x^\a} = I(G,{\ww}) + (x_i \mid i \in U)$ for some $U \subseteq [n]$. Let $I(G',{\ww}') = I(G,{\ww}) + (x_i \mid i \in U)$. Then an associated radical of $I(G',{\ww}')$ is also an associated radical of $I(G,{\ww})$.  
\end{lem}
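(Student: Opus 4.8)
The plan is to deduce the statement directly from Lemma \ref{lem_ass_radical_transfer}. First I would record that $x^{\a} \notin I(G,\ww)$: otherwise $I(G,\ww):x^{\a} = S$, and the hypothesis would force $I(G,\ww) + (x_i \mid i \in U) = S$, which is impossible since $I(G,\ww)$ and $(x_i \mid i \in U)$ are both proper monomial ideals. Hence the hypothesis says exactly that
$$J := I(G',\ww') = \sqrt{I(G,\ww) : x^{\a}}$$
is an associated radical of $I(G,\ww)$; in particular $J$ is a squarefree monomial ideal.

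Next I would take an arbitrary associated radical $K$ of $I(G',\ww') = J$, say $K = \sqrt{J : v}$ for a monomial $v \notin J$. Writing $J = \bigcap_{\p} \p$ as an intersection of monomial prime ideals, one has $J : v = \bigcap_{v \notin \p} \p$, which is again squarefree, so in fact $K = J : v$. This small bookkeeping step is the only thing that needs checking, and it is precisely what lets us match the hypotheses of Lemma \ref{lem_ass_radical_transfer}, whose statement is phrased for an associated radical of the form $J : v$.

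Finally I would apply Lemma \ref{lem_ass_radical_transfer} with $I = I(G,\ww)$ and $u = x^{\a}$: since $J = \sqrt{I:u}$ is an associated radical of $I$ and $K = J : v$ is an associated radical of $J$, we conclude that $K$ is an associated radical of $I = I(G,\ww)$; concretely, $K = \sqrt{I(G,\ww) : (x^{\a} v^{s})}$, where $s$ is the largest exponent occurring in a minimal generator of $I(G,\ww)$. I do not expect any genuine obstacle; the argument is a routine combination of the preceding two lemmas together with the observation that colons of squarefree monomial ideals by monomials stay squarefree.
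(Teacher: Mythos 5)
Your proof is correct. Note that the paper itself gives no proof of this lemma (it is imported from \cite[Lemmas 2.13, 2.15]{HV}), so there is nothing internal to compare against; your derivation from Lemma \ref{lem_ass_radical_transfer} is the natural in-paper route, and each step checks out: $x^{\a}\notin I(G,\ww)$ because the right-hand side of the hypothesis is a proper ideal, hence $J=I(G',\ww')=\sqrt{I(G,\ww):x^{\a}}$ is an associated radical of $I(G,\ww)$; colons of squarefree monomial ideals by monomials are squarefree, so every associated radical of $J$ is literally of the form $J:v$ with $v\notin J$; and Lemma \ref{lem_ass_radical_transfer} then transfers it to $I(G,\ww)$.

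One caveat, which concerns the printed statement rather than your argument: your bookkeeping step exploits the fact that the hypothesis, read literally with the radical sign, forces $I(G,\ww)+(x_i\mid i\in U)$ to be a radical ideal (so every edge of $G$ missing $U$ has weight $1$). In the proof of Theorem \ref{thm_scm_skew_tableau} the lemma is invoked for genuinely weighted, non-radical ideals such as $I(Y_1)+(x_1)$, i.e., in effect for the plain colon identity $I(G,\ww):x^{\a}=I(G,\ww)+(x_i\mid i\in U)$. Your squarefree reduction is not available in that setting, but an even shorter argument covers it: if $K=\sqrt{(I(G,\ww):x^{\a}):v}$ with $v\notin I(G,\ww):x^{\a}$, then $K=\sqrt{I(G,\ww):x^{\a}v}$ and $x^{\a}v\notin I(G,\ww)$, so $K$ is an associated radical of $I(G,\ww)$ directly. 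So you have proved the lemma exactly as stated; just be aware that the stated hypothesis is more restrictive than the situations in which the paper later uses it.
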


\begin{lem}\label{lem_non_essential_vars} Let $Y$ be a filling of positive integers in a skew shape $\lambda/\mu$. Assume that $I(Y)$ is sequentially Cohen-Macaulay and $\lambda_1 = \lambda_2$. Then for any $j$ such that $\mu_1+1 \le j \le \lambda_1$ we have $I(Y) + (y_j)$ is sequentially Cohen-Macaulay.    
\end{lem}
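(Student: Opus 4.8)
The plan is to reduce to the skew Ferrers (weight-free) situation by using the machinery of associated radicals, and then invoke Theorem~\ref{thm_scm_skew_Ferrers} together with the shredding-vertex analysis. Since $I(Y)$ is sequentially Cohen-Macaulay, by Lemma~\ref{lem_scm_radical} every associated radical of $I(Y)$ is again sequentially Cohen-Macaulay. By Lemma~\ref{lem_associated_radicals_edge_weight}, an associated radical of $I(Y)$ has the form $I(G_{\lambda/\mu}\backslash U) + (x_i \mid i \in U)$ for a suitable set $U$ of vertices; combining with Lemma~\ref{lem_associated_rad_2} and Lemma~\ref{lem_ass_radical_transfer}, the associated radicals obtained by further localizing these intermediate ideals are again associated radicals of $I(Y)$. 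The upshot is that it suffices to produce, for a given $j$ with $\mu_1+1\le j\le\lambda_1$, a monomial $u$ and an iterated localization showing that $\sqrt{I(Y):u}$ equals $I(Y') + (y_j)$ for some sequentially Cohen-Macaulay skew Ferrers ideal, where $Y'$ is the skew tableau $Y$ itself (the new generator $y_j$ simply kills the $j$th column at the level of the radical); then $I(Y)+(y_j)$, having the same radical behaviour, is sequentially Cohen-Macaulay by Theorem~\ref{thm_vertex_decomposable}.

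More concretely, I would argue as follows. First handle the boundary possibilities for $j$. Because $\lambda_1=\lambda_2$, the column $j$ meets at least two rows (indeed rows $1$ and $2$, since $\mu_1 < j \le \lambda_1 = \lambda_2$ forces $\mu_2 \le \mu_1 < j$), so $y_j$ has degree at least $2$ in $G_{\lambda/\mu}$. I want to exhibit $y_j$ as obtainable in an associated radical. Pick the edge $\{x_1,y_j\}$ (or $\{x_2,y_j\}$) of largest weight incident to $y_j$ among rows meeting column $j$, say with weight $w$. Take $\a$ to be the exponent vector that is $w-1$ on $x_1$ (or on every row index $i$ with $\{x_i,y_j\}\in E$ and $w(i,j)\le w$... ) — more carefully, one chooses $\a$ so that $U$ in Lemma~\ref{lem_associated_radicals_edge_weight} contains exactly $y_j$ together perhaps with vertices we want to delete anyway. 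The point is that by choosing $\a_{x_i}$ appropriately for $i$ in the relevant rows, $y_j$ enters $U$ precisely when there is an edge $\{x_i,y_j\}$ with $\a_{x_i} < w(i,j)$, which we can arrange. This realizes $I(Y) + (y_j)$ (up to adding finitely many other $(y_k)$'s, which we can then treat recursively using Lemma~\ref{lem_associated_rad_2}) as related to an associated radical of $I(Y)$, hence sequentially Cohen-Macaulay; and then $I(Y)+(y_j)$ itself follows.

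Alternatively, and perhaps more cleanly, I would prove the statement directly on the level of the graph $G' = G_{\lambda/\mu}$ by noting that, since $\lambda_1 = \lambda_2$, deleting the $j$th column (which is what adjoining $(y_j)$ to the radical amounts to after localization) still leaves the $x$-degrees in a shape where Theorem~\ref{thm_scm_skew_Ferrers} applies — each component after deletion is again a connected skew Ferrers graph satisfying one of conditions (1)--(4), or is a disjoint union of such, because removing one interior column of a shape with $\lambda_1=\lambda_2$ cannot destroy all four boundary conditions simultaneously (this is essentially the content of Lemma~\ref{lem_shape_condition} applied to the pieces). Thus $I(Y) + (y_j)$ is the edge ideal of a graph whose components are vertex decomposable, hence sequentially Cohen-Macaulay by Theorem~\ref{thm_vertex_decomposable}, and by the mixed-sum principle the direct sum is too.

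The main obstacle I anticipate is the bookkeeping in the first approach: Lemma~\ref{lem_associated_radicals_edge_weight} does not let one adjoin a single variable $y_j$ in isolation — the set $U$ is determined globally by $\a$, so one typically picks up extra variables $y_k$ or $x_i$, and one must then peel these off one at a time via Lemma~\ref{lem_associated_rad_2} and re-establish that each intermediate ideal is still the edge ideal of a connected skew Ferrers graph (or a mixed sum of such) meeting the hypotheses. Making the recursion bottom out requires checking that adjoining $(y_j)$ for an \emph{interior} column never leaves a shape violating all of (1)--(4) of Theorem~\ref{thm_scm_skew_Ferrers}; since $\lambda_1 = \lambda_2$ by hypothesis, the extremal columns $y_1,\dots$ or rows are unaffected and at least one of the conjugate conditions survives, so the recursion is well-founded. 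Once that is in place, the conclusion is immediate.
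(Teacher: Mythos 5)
Your argument has a genuine gap: both of your routes establish (at best) that certain \emph{squarefree} ideals attached to $I(Y)+(y_j)$ are sequentially Cohen-Macaulay, but never that the weighted ideal $I(Y)+(y_j)$ itself is. An associated radical $\sqrt{I(G,\ww):x^{\a}}$ is always a radical ideal of the form $I(G\backslash U)+(x_i\mid i\in U)$ (Lemma \ref{lem_associated_radicals_edge_weight}); the weights are destroyed in the process, so $I(Y)+(y_j)$, which is not squarefree in general, can never be realized as an associated radical of $I(Y)$, and your phrase ``having the same radical behaviour, is sequentially Cohen-Macaulay'' has no support: Lemma \ref{lem_scm_radical} says an ideal is SCM if and only if \emph{all of its own} associated radicals are SCM, and knowing that its radical (or some related squarefree ideal) is SCM does not imply that the ideal is. Your ``cleaner'' alternative fails for the same reason --- it only discusses the shape obtained by deleting column $j$, i.e.\ the radical of $I(Y)+(y_j)$, and the whole point of the paper (cf.\ the example following Theorem \ref{thm_scm_skew_tableau}) is that an SCM shape can carry non-SCM fillings, so the weights cannot be ignored; moreover Theorem \ref{thm_vertex_decomposable} applies only to squarefree edge ideals and cannot be invoked for $I(Y)+(y_j)$.

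The missing idea is to run Lemma \ref{lem_scm_radical} on $J=I(Y)+(y_j)$ itself: $J$ is SCM if and only if every associated radical $K=\sqrt{J:u}$ is SCM. One then checks that such a $K$ is either an associated radical of $I(Y)$ (hence SCM by hypothesis and Lemma \ref{lem_scm_radical}), or equals $L+(y_j)$ where $L$ is an associated radical of $I(Y)$, i.e.\ a \emph{squarefree} skew Ferrers ideal of a sub-shape plus some variables, whose first two rows still have equal length. This reduces the lemma to the case $I(Y)=I_{\lambda/\mu}$, which is then settled by Theorem \ref{thm_scm_skew_Ferrers} and induction --- roughly the content of your second approach, and the only place where it is actually usable. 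Without first passing through the associated radicals of $J$, no conclusion about the weighted ideal $I(Y)+(y_j)$ follows.
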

\begin{proof} Let $J = I(Y) + (y_j)$. By Lemma \ref{lem_scm_radical}, we need to prove that all associated radicals of $J$ are sequentially Cohen-Macaulay. Let $K$ be an associated radical of $J$. Then either $K$ is an associated radical of $I(Y)$ or $K = L + (y_j)$ where $L$ is an associated radical of $I(Y)$ corresponding to some skew shape $\lambda^1/\mu^1$ obtained by removing some rows and columns of $\lambda/\mu$. Since $K \neq L$, we deduce that $\lambda^1_1 = \lambda^1_2$. Thus, it suffices to prove the claim in the case $I(Y) = I_{\lambda/\mu}$. The conclusion then follows from Theorem \ref{thm_scm_skew_Ferrers} and induction.    
\end{proof}

\begin{proof}[Proof of Theorem \ref{thm_scm_skew_tableau}] By Lemma \ref{lem_scm_radical} and Lemma \ref{lem_shape_condition}, we deduce that either $\lambda_1 > \lambda_2$ or $\mu_1 + 1 = \lambda_1$ or $\lambda_1' > \lambda_2'$ or $\mu_1' + 1 = \lambda_1'$. We consider the case $\lambda_1 > \lambda_2$ only. The other cases can be done similarly and are left as an exercise for interested readers. We first assume that $I(Y)$ is sequentially Cohen-Macaulay and prove the the necessary condition. We have 
$$I(Y) : x_1^{\omega_1} = I(Y_2) + (y_{\ell} \mid w(1,\ell) \le \omega_1) \text{ and } I(Y): y_{m}^{\omega_1} = I(Y_1) + (x_1)$$
where $\omega_1 = \max \{w(1,j) \mid j = \lambda_1 + 1, \ldots, m\}$. By Lemma \ref{lem_associated_rad_2}, associated radicals of $I(Y_2) + (y_{\ell} \mid w(1,\ell) \le \omega_1)$ and $I(Y_1) + (x_1)$ are associated radicals of $I(Y)$. By Lemma \ref{lem_scm_radical}, we deduce that $I(Y_1)$ and $I(Y_2)$ are sequentially Cohen-Macaulay.

We now assume that $\lambda_1 > \lambda_2$, $Y_1$ and $Y_2$ are sequentially Cohen-Macaulay. We need to prove that $Y$ is sequentially Cohen-Macaulay. By Lemma \ref{lem_scm_radical}, it suffices to prove that all associated radicals of $I(Y)$ are sequentially Cohen-Macaulay. Let 
$$J = \sqrt{I(Y) : x^\a y^\b} = I_{\lambda/\mu} + (x_i, y_j \mid i \in U, j \in V)$$ 
for some $U \subseteq \{1,\ldots,n\}$ and $V \subseteq \{1,\ldots,m\}$ be an associated radical of $I(Y)$. There are two cases:

\smallskip
\noindent {\bf Case 1.} $1 \notin U$. If $j \in V$ for all $j \ge \lambda_2+1$ we deduce that $a_1 \ge \omega_1$. Let $\mathbf{c}, \mathbf{d}$ be the restriction of $\a,\b$ to the support of $Y_1$. By Lemma \ref{lem_associated_radicals_edge_weight}, we have $\sqrt{I(Y_1) : x^{\mathbf{c}} y^{\mathbf{d}} } + (y_\ell \mid w(1,\ell) \le \omega_1) = \sqrt{I(Y) : x^\a y^\b} = J$. Since $Y_1$ is sequentially Cohen-Macaulay, we deduce that $J$ is sequentially Cohen-Macaulay. Now, assume that $j\notin V$ for some $j \ge \lambda_2 + 1$. By Theorem \ref{thm_scm_skew_Ferrers}, it suffices to prove that $J + (x_1)$ and $J + (y_{\mu_1+1},\ldots, y_{\lambda_1})$ are sequentially Cohen-Macaulay. By Lemma \ref{lem_associated_rad_2}, they are associated radicals of $Y_1$ and $Y_2$ respectively. The conclusion follows.

\smallskip
\noindent {\bf Case 2.} $1 \in U$. If $a_1 < \min (w(1,j) \mid j = \mu_1 + 1, \ldots,\lambda_2)$ then $J$ is an associated radical of $Y_1$. Thus, we may assume that $a_1 \ge w(1,j)$ for some $j \in \{\mu_1+1, \ldots, \lambda_2\}$. In this case $J$ is an associated radical of $Y_2 + (y_\ell \mid \ell \in V)$ for some $V$ such that $\{\lambda_2+1, \ldots, \lambda_1\} \subseteq V$. The conclusion follows from Lemma \ref{lem_non_essential_vars}. 
\end{proof}
 \section*{Acknowledgments}
We thank Professor Uwe Nagel and Professor Nguyen Cong Minh for their many useful discussions and suggestions. A part of this work was completed during the first author’s visit to the Vietnam Institute for Advanced Study in Mathematics (VIASM). He wishes to express his gratitude to VIASM for its hospitality and financial support.

\end{document}